\let\epsilon\varepsilon
\crefname{equation}{}{}
\newtheorem{theorem}{Theorem}
\newtheorem{lemma}[theorem]{Lemma}
\newtheorem{proposition}[theorem]{Proposition}
\newtheorem{remark}[theorem]{Remark}
\newtheorem{definition}[theorem]{Definition}
\newtheorem{fact}[theorem]{Fact}
\crefname{enumi}{Property}{Properties} %
\DeclareMathOperator{\vol}{vol}
\DeclareMathOperator*{\argmax}{arg\,max}
\DeclareMathOperator*{\argmin}{arg\,min}
\newcommandx{\E}[2][1=, 2=, usedefault]{\mathbb{E}_{#2}[ #1 ]} 
\newcommandx{\El}[2][1=, 2=, usedefault]{\mathbb{E}_{#2}\left[ #1 \right]} 
\newcommand{\R}{\mathbb{R}} 
\newcommand{\Z}{\mathbb{Z}} 
\newcommand{\X}{\mathcal{X}} 
\newcommand{\pa}[1]{\left( #1\right)} 
\newcommand{\innp}[1]{\langle #1 \rangle}
\newcommand{\norm}[1]{\| #1 \|} 
\newcommand{\norml}[1]{\left\| #1 \right\|} 
\newcommand{\abs}[1]{| #1 |}
\newcommand{\defi}{\stackrel{\mathrm{\scriptscriptstyle def}}{=}}
\title[Non-Euclidean High-Order Smooth Convex Optimization]{Non-Euclidean High-Order Smooth Convex Optimization}
\newcommand\blfootnote[1]{%
\begingroup
\renewcommand\thefootnote{}\footnote{#1}%
\addtocounter{footnote}{-1}%
\endgroup
}
\algnewcommand{\lst}{\texttt{lst}}
\algnewcommand{\slst}{\texttt{slst}}
\algnewcommand{\SEND}{\textbf{send}}
\newsavebox{\algleft}
\newsavebox{\algright}
\newcounter{algorithmicH}% New algorithmic-like hyperref counter
\let\oldalgorithmic\algorithmic
\renewcommand{\algorithmic}{%
  \stepcounter{algorithmicH}% Step counter
  \oldalgorithmic}% Do what was always done with algorithmic environment
\renewcommand{\theHALG@line}{ALG@line.\thealgorithmicH.\arabic{ALG@line}}
\newif\ifTODO 
\newcommand\newlink[2]{\hyperlink{#1}{\normalcolor #2}}
\newcommand\newtarget[2]{\Hy@raisedlink{\hypertarget{#1}{}}#2}
\newcommand\linktoproof[1]{{\normalfont[{\hyperlink{proof:#1}{$\downarrow$}}]}}
\newcommand\linkofproof[1]{\textbf{of \cref{#1}. }\newtarget{proof:#1}}
\newcommand\setsuch[2]{\{ #1 \mid #2  \}}
\newcommand{\bigo}[1]{O( #1 )}
\newcommand{\bigop}[2]{\newlink{def:big_O_p}{O_{#1}}( #2 )}
\newcommand{\bigopl}[2]{\newlink{def:big_O_p}{O_{#1}}\left( #2 \right)}
\newcommand{\bigotilde}[1]{\newlink{def:big_O_tilde}{\widetilde{O}}( #1 )}
\newcommand{\bigotildep}[2]{\newlink{def:big_O_p}{\widetilde{O}_{#1}}( #2 )} %
\newcommand{\bigotildepl}[2]{\newlink{def:big_O_p}{\widetilde{O}_{#1}}\left( #2 \right)}
\newcommand{\bigomega}[1]{\Omega( #1 )}
\newcommand{\bigomegal}[1]{\Omega\left( #1 \right)}
\newcommand{\bigomegatilde}[1]{\newlink{def:big_O_tilde}{\widetilde{\Omega}}( #1 )}
\newcommand{\bigomegatildelp}[2]{\newlink{def:big_O_p}{\widetilde{\Omega}_{#1}}\left( #2 \right)}
\newcommand{\bigomegapl}[2]{\newlink{def:big_O_p}{\Omega_{#1}}\left( #2 \right)}
\let\oldepsilon\epsilon
\renewcommand\epsilon{\newlink{def:accuracy}{\oldepsilon}} 
\let\oldnu\nu
\renewcommand\nu{\newlink{def:hoelder_smoothness_exponent}{\oldnu}} %
\renewcommand\L{\newlink{def:hoelder_smoothness_constant}{L}}
\renewcommand\d{\mathrm{d}}
\newcommandx\regret[2][2= , usedefault]{\newlink{def:acc_sec_regret}{R_{#1}^{#2}}}
\newcommandx*\Gt[1][1=t, usedefault]{\newlink{def:acc_sec_differentiable_gap_bound}{G_{#1}}}
\newcommandx*\Lt[1][1=t, usedefault]{\newlink{def:acc_sec_differentiable_lower_bound}{L_{#1}}}
\newcommandx*\Ut[1][1=t, usedefault]{\newlink{def:acc_sec_differentiable_upper_bound}{U_{#1}}}
\newcommandx*\etai[2][1=t, 2=, usedefault]{\newlink{def:acc_sec_sequence_of_lr_for_accelerated_rates}{\eta_{#1}^{#2}}}
\newcommandx*\aig[2][1=t, 2=, usedefault]{a_{#1}^{#2}}
\newcommand*\circledaux[1]{\tikz[baseline=(char.base)]{
    \node[shape=circle,draw,inner sep=0.8pt] (char) {#1};}}
\NewDocumentCommand{\circled}{ m o }{%
    \IfNoValueTF{#2}{ \circledaux{#1} }{ \stackrel{\circledaux{#1}}{#2} }%
}
\newcommandx*\prox[1][1=, usedefault]{\newlink{def:proximal_operator}{\operatorname{prox}_{#1}}}
\newcommand\Prox{\newlink{def:set_of_all_prox_points}{\operatorname{Prox}}}
\newcommandx*\yktilde[1][1=k, usedefault]{\newlink{def:y_k_tilde}{\tilde{y}_{#1}}} 
\newcommandx*\xk[1][1=k, usedefault]{x_{#1}} 
\newcommandx*\yk[1][1=k, usedefault]{y_{#1}} 
\newcommandx*\zk[1][1=k, usedefault]{z_{#1}} 
\newcommandx*\Ak[2][1=k, 2=, usedefault]{A_{#1}^{#2}}
\newcommandx*\ak[2][1=k, 2=, usedefault]{a_{#1}^{#2}}
\newcommandx*\breg[1][1=\psi, usedefault]{\newlink{def:bregman}{D_{#1}}}
\newcommandx*\bregd[1][1=\psi^{\ast}, usedefault]{\newlink{def:bregman}{D_{#1}}} %
\newcommand\xast{\newlink{def:xast}{x^\ast}}
\newcommand\Ccancel[2][black]{
    \let\OldcancelColor\CancelColor
    \renewcommand\CancelColor{\color{#1}}
    \cancel{#2}
    \renewcommand\CancelColor{\OldcancelColor}
}
\newcommandx*\Rp[1][1=, usedefault]{\mathbb{R}_{\geq 0}}
\newcommand\smax[1]{\newlink{def:softmax}{\operatorname{smax}_{#1}}}
\newcommand\smaxn[2]{\newlink{def:partial_softmax}{\operatorname{smax}^{\leq #1}_{#2}}}
\renewcommand\vol{\operatorname{vol}}
\newcommand\sign{\operatorname{sign}}
\newcommand\subgrad{\partial}
\newcommand\dualnumber[1]{\newlink{def:youngs_conjugate_number}{{#1}_\ast}}
\renewcommand\S[2]{\newlink{def:q_fold_smoothing}{\mathcal{S}^{(#1)}_{#2}}}
\newcommand\smoothing{\newlink{def:1_fold_smoothing}{S}}
\newcommand\elll[1]{\newlink{def:thing_minimized_in_the_lower_bound}{\ell_{#1}}}
\newcommandx*\subdiffeps[2][1=\epsilon, 2=, usedefault]{\newlink{def:epsilon_subdifferential}{\partial^{#1}_{#2}}}
\newcommand\deltainexact{\newlink{def:delta_inexact}{\delta}}
\newcommand\p{\newlink{def:p_from_p_norm}{p}}
\renewcommand\r{\newlink{def:exponent_of_regularizers_unif_convexity}{r}}
\newcommand\m{\newlink{def:m_max_of_p_and_2}{m}}
\newcommand\q{\newlink{def:order_of_derivative_for_smoothness}{q}}
\newcommand\muUnif{\newlink{def:regularizers_unif_cvx_constant}{\mu}}
\renewcommand\dim{\newlink{def:dimension}{d}} 
\newcommand\Rd{\R^{\dim}} 
\newcommand\NN{\mathbb{N}}
\newcommand\eventindicator[1]{\newlink{def:event_indicator_function}{\mathbb{I}_{\{#1\}}}}
\newcommand\indicator[1]{\newlink{def:indicator_function}{I}_{#1}}
\newcommandx*\ballnorm[2][1=\beta, 2=\norm{\cdot}, usedefault]{\newlink{def:ball_norm}{B^{#2}_{#1}}}
\newcommandx*\taylorf[2][1=q, 2=f, usedefault]{\newlink{def:qth_taylor_expansion}{#2_{#1}}}
\newcommandx*\M[1][1=, usedefault]{\newlink{def:non_euclidean_moreau_envelope}{M_{#1}}}
\newcommandx*\iproxoracle[1][1=, usedefault]{\newlink{def:inexact_proximal_oracle}{\mathcal{O}_{#1}}}
\newcommandx*\iproxoraclehat[1][1=, usedefault]{\newlink{def:generalized_inexact_proximal_oracle}{\widehat{\mathcal{O}}_{#1}}}
\newcommand\subdiffsqnorm[1]{\newlink{def:subdiff_of_squared_norm}{h_{#1}}}
\begin{document}

\maketitle

\blfootnote{$^\ast$Equal contribution.}

\blfootnote{\color{darkgray}Most of the non-local notations in this work have a link to their definitions, using \href{https://damaru2.github.io/general/notations_with_links/}{this code}, such as ${\protect\hyperlink{def:qth_taylor_expansion}{\color{darkgray}f_q(y;x)}}$, which links to where this notation is defined as the ${\protect\hyperlink{def:order_of_derivative_for_smoothness}{\color{darkgray}q}}$-th order Taylor expansion of $f$ around $x$.}

\begin{abstract}
    We develop algorithms for the optimization of convex objectives that have H\"older continuous $q$-th derivatives by using a $q$-th order oracle, for any $q \geq 1$. Our algorithms work for general norms under mild conditions, including the $\ell_p$-settings for $1\leq p\leq \infty$.
    We can also optimize structured functions that allow for inexactly implementing a non-Euclidean ball optimization oracle. We do this by developing a non-Euclidean inexact accelerated proximal point method that makes use of an \textit{inexact uniformly convex regularizer}. 
    We show a lower bound for general norms that demonstrates our algorithms are nearly optimal in high-dimensions in the black-box oracle model for $\ell_p$-settings and all $q \geq 1$, even in randomized and parallel settings. This new lower bound, when applied to the first-order smooth case, 
    resolves an open question in parallel convex optimization.
\end{abstract}

\section{Introduction}

In optimization, objectives with high-order smoothness offer the possibility of faster convergence rates, at the expense of computation of higher-order derivatives. Recently, this area of research has gained significant interest, both due to the discovery of acceleration techniques for high-order methods, and also due to the active development of tensor methods which are the working horse for the subroutines required in this context \citep{ahmadi2023higher,cartis2023second,zhu2023cubicquartic,zhu2024global}. Despite the substantial activity in this field, there has been scarce  investigation of the role of these ideas for non-Euclidean norms (more precisely, norms that are not Hilbertian). Given the proved advantages of exploiting non-Euclidean structure in various applications, see e.g.~\citep{BenTal:2001,Nesterov:2005Smoothing, Nemirovski:2004, Sherman:2017}, we consider this as a major gap in the current optimization toolbox.

In this work, we study the optimization of a general convex $\q$-times differentiable function $f$ whose $\q$-th derivative is $(\L, \nu)$-H\"older continuous with respect to a norm $\norm{\cdot}$, that is,
\begin{equation} \label{eq:holder_cont}
    \norm{\nabla^{\q} f(x) - \nabla^{\q} f(y)}_{\ast} \leq \L \norm{x -y}^{\nu} \text{ for all } x, y \in \Rd,
\end{equation}
where $\q \in \Z_+$, $\nu \in (0, 1]$, and where the norm of a multilinear operator $F: \R^{d \otimes \q} \to \R$ like $F = \nabla^{\q} f(x) - \nabla^{\q} f(y)$ is defined as $ \norm{F}_{\ast} \defi \max_{\norm{v}\le 1}|F[v]^{\otimes \q}|$.
In this case, we say $f$ is $\q$-th order $(\L, \nu)$-H\"older smooth with respect to $\norm{\cdot}$. We make use of an oracle that returns all derivatives of $f$ at a point up to order $\q$. For the case of $p$-norms, we specialize our results and characterize the optimal oracle complexity, up to logarithmic factors.  We also study the optimization of convex functions with a reduction to inexact $\p$-norm $\rho$-ball optimization oracles. That is, using an oracle to approximately minimizing the function in balls of a fixed radius $\rho$ with respect to a $\p$-norm, we minimize the function globally. The oracle can be implemented fast for some functions with structure. More concretely, our contributions can be summarized as in the following.

\subsection{Our Contributions}

\paragraph{Upper Bounds} 

We develop a general \textit{non-Euclidean} inexact accelerated proximal point method and apply it for the optimization of $\q$-th order H\"older-smooth convex functions, and of structured functions for which we can implement a ball optimization oracle. The algorithm makes use of an \textit{inexact uniformly convex regularizer}, a property that we introduce that is key to solve several cases, in particular the $\ell_{\p}$ setting for $\p \geq \q + \nu$. We also develop an inexact unaccelerated proximal point method, that achieves near optimality for the case $\p=\infty$, not covered by the accelerated method.

Each iteration of our algorithms only requires one call to the $\q$-th order, or ball optimization oracle. When the square of the norm considered is strongly-convex with respect to itself, we establish convexity of the regularized Taylor subproblems appearing at each iteration of the high-order smooth convex case. In the $q$-th order $(\L, \nu)$-H\"older smooth convex setting with respect to $\norm{\cdot}_{\p}$, the near-optimal convergence rates that we establish for achieving an $\newtarget{def:accuracy}{\epsilon}$-minimizer are
\[
    \bigotildepl{\q+\nu, \p}{\left(\frac{\L R_{\p}^{\q+\nu}}{\epsilon}\right)^{\tfrac{\m}{(\m+1)(\q+\nu)-\m}}} \text{ if } \p \in [1, \infty), \text{ and } \bigopl{\q+\nu}{\left(\frac{\L R_\infty^{\q+\nu}}{\epsilon}\right)^{\tfrac{1}{\q+\nu-1}}}  \text{ if } \p = \infty,
\] 
where $\m \defi \max\{2, \p\}$, $R_{\p} \defi \norm{x_0-\xast}_{\p}$ is the initial distance to a minimizer $\xast$, and where log factors only appear for $\p=1$. Similarly for $\rho$-ball optimization oracles, which can be thought as the case $\q \to \infty$, we achieve rates $\bigotildep{\m}{(R_{\p}/\rho)^{\frac{\m}{\m+1}}}$ and $\bigotilde{R_{\infty}/\rho}$ for $\p\in[1, \infty)$ and $\p = \infty$.

\paragraph{Lower Bounds}  
As is customary in convex optimization, we study the suboptimality of our algorithms in the black-box oracle model \citep{Nemirovski:1983} and provide a lower bound for convex high-order H\"older smooth functions in high dimensions, with respect to a general norm, even for randomized and parallel settings with access to a local oracle, implying near-optimality of our algorithms for \(\ell_p\) settings. Our approach constructs lower bounds by composing a non-Euclidean randomized smoothing with a hard Lipschitz instance with respect to an arbitrary norm, built as the maximum of softmax-like functions applied to an increasing sequence of linear functions. A key technical innovation is proving that any piecewise linear function can be smoothed while preserving its norm-dependent Lipschitz properties, unlike previous techniques restricted to the \(\ell_2\) setting.  

Another contribution is the analysis of a non-Euclidean randomized smoothing operator that can be iterated to obtain high-order smooth functions from a Lipschitz function. By leveraging the divergence theorem, we establish a smoothing technique that applies seamlessly to all norms, and in particular to all $\ell_p$ settings, \( p \in [1, \infty] \), whereas previous lower bounds via smoothing techniques only worked for \( p \geq 2 \) and required intricate reductions via high-dimensional embeddings to the \( p = \infty \) case to handle \( p \in [1,2) \), even for simpler cases, like those applying to deterministic algorithms in the first-order smooth case, see \cref{sec:related_work}. Our approach offers a unified treatment and, to the best of our knowledge, is the first to address the case of order \( \q \geq 2 \) in this setting. Moreover, our results strengthen existing first-order lower bounds, establishing a nearly optimal \( \bigomegatilde{\epsilon^{-1/2}} \) rate for first-order parallel smooth convex optimization in the \( \ell_1 \) setting, thereby improving upon previous work, cf. \citep{diakonikolas2020lower}.

\subsection{Related Work}\label{sec:related_work}

    We note that \cite{baes2009estimate} was the first work to develop (unaccelerated) general high-order methods under convexity and high-order smoothness, defined with respect to the Euclidean norm. 
    While it is enough to approximate a critical point of the proximal subproblems appearing in \citep{baes2009estimate}, \citet{nesterov2021implementable} showed that by choosing the right regularization parameter, the subproblems become convex. Although convexity is not required in order to find an approximate critical point in a tractable way in several optimization contexts, it usually enables to solve such a problem faster, cf.  \citep{carmon2021lower}.
    Previously, \citet{monteiro2013accelerated} developed a general accelerated inexact proximal point algorithm, for which they achieved near optimal second-order oracle complexity for convex functions with a Lipschitz Hessian with respect to the Euclidean norm. Building on this framework, three works \citep{gasnikov2019optimal,bubeck2019near,jiang2019optimal} independently achieved near optimal $\q$-th order oracle complexity for high-order Euclidean smooth convex optimization.  Later \citet{kovalev2022first,carmon2022optimal} concurrently achieved optimal $\q$-th order oracle complexity, up to constants, improving over previous solutions by logarithmic factors, via two very different techniques.
    \citet{song2019unified} studied the problem for functions with $\p$-norm regularity, but they only solved the case where $\p \leq \q+1$, where $\q$ is the degree of the high-order oracle. Besides, each iteration of their algorithm requires solving two regularized Taylor expansions of the function with different regularization functions and a binary search. 
    \citet{adil2022optimal} designed and algorithm for the setting of high-order non-Euclidean smooth monotone variational inequalities with strongly-convex regularizers. \citet{carmon2020acceleration} introduced the optimization framework with Euclidean ball optimization oracles, and this technique has enabled the design of algorithms in several different settings \citep{carmon2021thinking,carmon2022distributionally,martinez2020global,carmon2024whole}.

    Regarding lower bounds, \citet{arjevani2019oracle} showed a lower bound for deterministic algorithms for convex functions with Lipschitz $\q$-th derivatives with respect to the Euclidean norm, by providing a hard function in the form of a $(\q+1)$-degree polynomial. Independently, \citet{agarwal2018lower} developed some suboptimal lower bounds by an interesting technique consisting of compounding randomized smoothing by repeated convolution of a hard convex Lipschitz instance resulting in a function with Lipschitz high-order derivatives. In this spirit and inspired by them, \citet{garg2021nearoptimal} developed a nearly optimal lower bound via applying randomized smoothing to a construction similar to the classical Lipschitz instance consisting of a maximum of linear functions, but using the maximum of a variant of these functions via applying several softmax. They achieve, up to logarithmic factors, the lower bound in \cite{arjevani2019oracle}, but they also provide lower bounds for parallel randomized algorithms, and for quantum algorithms. In the non-Euclidean setting, existing lower bounds typically rely on inf-convolution smoothing for \( p \geq 2 \), whereas for \( p \in [1,2) \), they use high-dimensional embeddings since an inf-convolution smoothing kernel is known to be unattainable in this regime without incurring polynomial dependence on the dimension, as implicitly shown in \cite[Example 5.1]{aspremont2018optimal}. These techniques have been applied to establish lower bounds for deterministic sequential methods \citep{Nemirovski:1983,guzman2015lower} and parallel randomized methods \citep{diakonikolas2020lower}.

\paragraph{Concurrent independent work.} We note that the concurrent work \citep{adil2024convex}, independently showed convergence of an analogous accelerated non-Euclidean (exact) proximal algorithm. As opposed to them, we also introduced the notion of inexact uniformly-convex regularizers, proved convergence when we use them, even when we have an inexact implementation of the proximal oracles, and we show our subproblems are convex for several cases. \citet{adil2024convex} also apply their framework to the optimization of non-Euclidean high-order smooth convex functions by exactly solving a regularized Taylor expansion of the function. However, we studied the more general $\q$-order $\nu$-H\"older smooth case with respect to a $\p$-norm and established the optimal or near-optimal convergence, by inexactly solving a regularized Taylor expansion, for all cases $\p \geq 1$, $\q \geq 1$, $\nu \in (0, 1]$, where the smooth case corresponds to $\nu =1$. The high-order smooth convex optimization analysis in \citep{adil2024convex} is limited to $\p \geq 2$ and $\q + 1 \geq \p$. On the other hand, they studied the application of this framework to $\p$-norm regression.

\section{Preliminaries and Groundwork}\label{sec:preliminaries_and_groundwork}

Throughout, we consider a finite-dimensional normed space $(\R^{\newtarget{def:dimension}{\dim}},\|\cdot\|)$ with an inner product $\langle \cdot,\cdot\rangle$ that, importantly, does not necessarily induce the norm. Most of our proofs work for general norms, although we sometimes specialize to the case $\|\cdot\|=\|\cdot\|_{\p}$, where $1\leq \p\leq\infty$.

\paragraph{Notation.} In this work, we often use functions that are regular with respect to $\newtarget{def:p_from_p_norm}{\p}$-norms such as $\newtarget{def:order_of_derivative_for_smoothness}{\q}$-th order $(\newtarget{def:hoelder_smoothness_constant}{\L}, \newtarget{def:hoelder_smoothness_exponent}{\nu})$-H\"older smoothness, and we use regularizers that are, possibly $\newtarget{def:delta_inexact}{\deltainexact}$-inexact $(\newtarget{def:regularizers_unif_cvx_constant}{\muUnif}, \newtarget{def:exponent_of_regularizers_unif_convexity}{\r})$-uniformly convex. We reserve the letters $\p, \q, \r, \deltainexact, \muUnif, \L, \nu$ for this. We always use $\newtarget{def:m_max_of_p_and_2}{\m} \defi \max\{2, \p\}$. We denote $\newtarget{def:event_indicator_function}{\eventindicator{A}}$ the event indicator that is $1$ if $A$ holds true and $0$ otherwise.
We denote $\newtarget{def:qth_taylor_expansion}{\taylorf[\q][f]}(y;x) \defi  \sum_{i=0}^{\q} \frac{1}{i!} \nabla^i f(x)[y-x]^{\otimes i}$ the $\q$-th order Taylor expansion of $f$ at $y$ around $x$. We use $\newtarget{def:xast}{\xast}$ for a minimizer of a function when is clear from context and it exists. We use $\newtarget{def:big_O_p}{\bigop{\p}{\cdot}}$ and $\newtarget{def:big_O_tilde}{\bigotilde{\cdot}}$ as the big-$O$ notation omitting, respectively, factors depending on $\p$ and logarithmic factors.  Given a differentiable function $\psi$, we denote the Bregman divergence of $\psi$ at $x, y$ by $\newtarget{def:bregman}{\breg[\psi]}(x,y) \defi \psi(x) - \psi(y) - \innp{\nabla \psi(y), x-y}$.
\begin{definition}[Young's conjugate number] \label{def:youngs_conjugate_number}
    Given $\p \in [1, \infty]$, we define its Young's conjugate as $\newtarget{def:youngs_conjugate_number}{\dualnumber{p}} \defi (1 - 1/\p)^{-1}$ so that $\frac{1}{\p} + \frac{1}{\dualnumber{p}} = 1$. For $\p = 1$ it is $\dualnumber{p} = \infty$ and vice versa. It is well known that the dual norm of $\|\cdot\|_{\p}$ is $\|\cdot\|_{\dualnumber{p}}$.
\end{definition}

\begin{definition}[Enlarged subdifferential] Given a function $f:\Rd \to \R$ and $\gamma \geq 0$, we define the $\gamma$-enlarged subdifferential of $f$ as
    \[
        \newtarget{def:epsilon_subdifferential}{\subdiffeps[\gamma]}f(y) \defi \setsuch{g\in\Rd}{f(z) \geq f(y) + \innp{g, z-y} - \gamma} \ \text{ for all } z\in\Rd.
    \] 
    We say any $g\in\subdiffeps[\gamma]f(y)$ is a $\gamma$-enlarged subgradient of $f$ at $y$.
\end{definition}

\begin{definition}[Non-Euclidean Moreau envelope]\label{def:moreau_env_and_prox}
    Given a norm $\norm{\cdot}$, and 
    a parameter $\lambda\geq 0$, define the Moreau envelope of a convex, proper, and closed function $f:\Rd \to \R \cup \{+\infty\}$ as
    \begin{equation}\label{def:non_eucl_moreau_envelope}
        \newtarget{def:non_euclidean_moreau_envelope}{\M[\lambda]}(x) \defi \min_{y\in\Rd}\{ f(y) + \frac{1}{2\lambda}\norm{x-y}^2\},
    \end{equation}
    where for $\lambda = 0$ we define $\M[0](x) \defi f(x)$. Similarly, we define $\newtarget{def:set_of_all_prox_points}{\Prox}_\lambda(x) \defi \argmin_{y\in\Rd} \{ f(y) + \frac{1}{2\lambda}\norm{x-y}^2\}$ and $\newtarget{def:proximal_operator}{\prox[\lambda]}(x)\in \Prox_\lambda(x)$ to be an arbitrary element.  We omit subindices if $\lambda$ is clear from context. 
\end{definition}

We now present some properties of this envelope. The proof can be found in \cref{app:proofs_of_preliminaries}.

\begin{proposition}[Envelope properties]\linktoproof{prop:properties_of_M}\label{prop:properties_of_M}
    Using \cref{def:moreau_env_and_prox} and letting $\xast$ be a minimizer of $f$, the following holds:
    \begin{enumerate}
        \item\label{properties_of_M:1} If $\norm{\cdot} = \norm{\cdot}_{\p}$, for $\p \in (1, \infty)$, $\Prox_\lambda(x)$ contains a single element. This may not be the case for $\p =1$ or $\p=\infty$.
        \item \label{properties_of_M:2}$\M[\lambda](x)$ is convex.
        \item\label{properties_of_M:3}  $f(\prox[\lambda](x)) \leq \M[\lambda](x) \leq f(x)$. In particular, $f(\xast) = \M[\lambda](\xast)$.
        \item \label{properties_of_M:4} Let $\newtarget{def:subdiff_of_squared_norm}{\subdiffsqnorm{x}}(y) \defi \partial_x \frac{\norm{x-y}^2}{2\lambda}$ be the subdifferential of $\frac{\norm{\cdot - y}^2}{2\lambda}$ at x. Then 
            $\partial \M[\lambda](x)=\operatorname{conv}\{\subdiffsqnorm{x}(z): z\in\Prox_\lambda(x)\}$ and there is $ g \in \subdiffsqnorm{x}(\prox[\lambda](x))$ such that $g \in \partial f(\prox[\lambda](x))$. 
            \item \label{properties_of_M:5}For all $y \in \Rd$ and $g \in \subdiffsqnorm{x}(y)$, it is $\lambda\innp{g, y-x} =  \norm{x-y}^2 = \lambda^2\norm{g}_\ast^2$. In particular, for any $g \in \subdiffsqnorm{x}(\prox[\lambda](x)) \subseteq \partial \M[\lambda](x)$ we have $\norm{g}_\ast = \frac{1}{\lambda} \norm{x-\prox[\lambda](x)}$.
            \item \label{properties_of_M:6} For any $\lambda_1 > 0$, $\lambda_2 \geq 0$, we have the following descent condition:
            \[
                \M[\lambda_1](x) - \M[\lambda_2](\prox[\lambda_1](x)) \geq \frac{1}{2\lambda_1} \norm{x-\prox[\lambda_1](x)}^2.
            \] 
        \end{enumerate}
    \end{proposition}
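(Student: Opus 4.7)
The plan is to handle each of the six items in turn, using only standard non-Euclidean convex analysis tools (Fenchel--Young equality for the squared norm, infimal convolution, and a Danskin-type envelope theorem), and to identify property~(4) as the technical heart.

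Items (1)--(3) are quick. Property~(2) is immediate: $M_\lambda$ is the infimal convolution of the convex functions $f$ and $\tfrac{1}{2\lambda}\norm{\cdot}^2$, so convexity is preserved. For~(3), plugging $y = x$ in the defining infimum gives $M_\lambda(x)\leq f(x)$, and $M_\lambda(x) = f(\prox(x)) + \tfrac{1}{2\lambda}\norm{x - \prox(x)}^2 \geq f(\prox(x))$; specializing at $x = \xast$ with $\prox(\xast) = \xast$ yields $f(\xast) = M(\xast)$. For~(1), when $p \in (1, \infty)$ the function $\norm{\cdot}_p^2$ is strictly convex, so the prox objective is strictly convex and has at most one minimizer; for $p \in \{1, \infty\}$ a small counterexample (e.g., $f(y) = y_1$ on $\R^2$ with $\norm{\cdot}_\infty$ at $x = 0$, $\lambda = 1$, for which every $(-1, s)$ with $\abs{s}\leq 1$ minimizes) rules out uniqueness.

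Property~(4) is a Danskin-type calculation. The joint function $F(x, y) \defi f(y) + \tfrac{1}{2\lambda}\norm{x - y}^2$ is convex in $x$ for each $y$, continuous, and coercive in $y$ for each $x$, so the infimum is attained on the nonempty compact set $\Prox(x)$. The envelope formula then yields $\partial M_\lambda(x) = \operatorname{conv}\{\partial_x F(x, y) : y \in \Prox(x)\} = \operatorname{conv}\{h_x(y) : y \in \Prox(x)\}$. The existence of $g \in h_x(\prox(x)) \cap \partial f(\prox(x))$ comes from the first-order optimality of $\prox(x)$ in $y$: $0 \in \partial f(\prox(x)) + \partial_y[\tfrac{1}{2\lambda}\norm{x - y}^2]_{y = \prox(x)}$, where by the symmetry $\norm{x - y}^2 = \norm{y - x}^2$ the second term coincides with $-h_x(\prox(x))$.

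For~(5), the central tool is the Fenchel--Young equality characterization of the subdifferential of the squared norm: for any norm, $g' \in \partial(\tfrac{1}{2}\norm{\cdot}^2)(u)$ holds iff $\innp{g', u} = \norm{u}^2 = \norm{g'}_\ast^2$. Applied with $u = x - y$ and $g' = \lambda g$, this yields the two identities (I would double-check the $y-x$ versus $x-y$ sign convention in the final write-up); specializing to $y = \prox(x)$ yields the bound on $\norm{g}_\ast$. Property~(6) then follows by chaining~(3) twice: by attainment in~(3), $M_{\lambda_1}(x) = f(\prox[\lambda_1](x)) + \tfrac{1}{2\lambda_1}\norm{x - \prox[\lambda_1](x)}^2$, while by the upper bound of~(3) applied at $\prox[\lambda_1](x)$, $M_{\lambda_2}(\prox[\lambda_1](x)) \leq f(\prox[\lambda_1](x))$; subtracting gives the descent inequality. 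The main obstacle throughout is~(4): in the non-Euclidean setting $\norm{\cdot}^2$ is not differentiable and $\Prox$ can be multi-valued, which is precisely why the envelope subdifferential must be expressed as a convex hull, and why the Fenchel--Young bookkeeping in~(5) replaces the gradient formula available in the Euclidean case.
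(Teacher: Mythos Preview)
Your proposal is correct and follows essentially the same route as the paper: strict convexity for uniqueness in (1), infimal-convolution/joint-convexity for (2), the obvious bounds for (3), a Danskin-type envelope formula plus first-order optimality and the symmetry $h_x(y)=-h_y(x)$ for (4), the Fenchel--Young equality for $\tfrac12\norm{\cdot}^2$ in (5), and chaining (3) for (6). The only cosmetic differences are that the paper spells out the joint convexity of $(x,y)\mapsto\norm{x-y}^2$ rather than invoking infimal convolution, derives the Fenchel--Young identity in (5) explicitly via a short inequality chain rather than citing it, and supplies separate counterexamples for both $p=1$ and $p=\infty$ (you give only the $p=\infty$ case, and your $f(y)=y_1$ lacks a global minimizer, which is harmless for (1) but slightly at odds with the proposition's framing).
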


    Given a function class $\mathcal{F}$ and a set $\X$, a local oracle is a functional, mapping $(f,x) \mapsto \mathcal{O}_f(x)$ to a vector space, such that when queried with the same point $x \in \X$ for two different functions $f, g \in \mathcal{F}$ that are equal in a neighborhood of $x$, it returns the same answer \citep{Nemirovski:1983,Nemirovski:1995}. An example of such an oracle that we use for our upper bounds is a $\q$-th order oracle, for $\q \in \Z_{+}$. Given the family $\mathcal{F}$ of functions that are $\q$-times differentiable, the $\q$-th order oracle is defined as $\mathcal{O}_f(x) = (f(x), \nabla f(x), \dots,\nabla^{\q} f(x))$. The main problem we study is the optimization of high-order H\"older-smooth functions convex functions by making use of a $\q$-th order oracle. 
    Similarly to the definition of H\"older smoothness, we say a function is $\L$-Lipschitz with respect to $\norm{\cdot}_{\p}$ if $\abs{f(x) - f(y)} \leq \L\norm{x-y}_{\p}$. For a convex function that has $(\L, 1)$-H\"older continuous first derivative with respect to some norm, we simply say that the function is $\L$-smooth with respect to that norm.
Our algorithms make use of regularizers with a new property that we introduce below, which is key to fully solve all cases of high-order smooth convex optimization.

\begin{definition}[Inexact uniform convexity]
    Given $\muUnif, \sigma, \delta >0$, a differentiable function $\psi$ is said to be $\delta$-inexact $(\muUnif, \sigma)$-uniformly convex with respect to a norm $\norm{\cdot}$, in a convex set $\X$, if for all $x, y \in \X$ we have 
\[
    \breg[\psi](x, y) \geq \frac{\muUnif}{\sigma}\norm{x-y}^\sigma - \deltainexact.
\] 
When $\delta = 0$ and $\sigma \geq 2$, we recover the classical notion of uniform convexity.
\end{definition}
Exact uniform convexity implies the inexact property with respect to smaller exponents.
\begin{lemma}\label{lemma:inexact_unif_convex_from_unif_convex}\linktoproof{lemma:inexact_unif_convex_from_unif_convex}
    Let $\psi$ be a function that is $(1, \sigma)$-uniformly convex, $\sigma \geq 2$. If $0 < s < \sigma$, then $\psi$ is also $(a^{\frac{\sigma^2}{s(\sigma-s)}} \frac{\sigma-s}{s\sigma})$-inexact $(a^{\frac{\sigma}{s}} , s)$-uniformly convex for any $a > 0$.
\end{lemma}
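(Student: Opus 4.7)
The approach is to derive the weaker inequality from the stronger one by Young's inequality applied to the conjugate pair of exponents $\sigma/s$ and $\sigma/(\sigma-s)$. This is essentially the standard trick for interpolating between two different exponents: a higher power dominates a lower power up to an additive constant, and one controls the trade-off by a free parameter.

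The plan is as follows. First, I would note that by hypothesis, for any $x,y$ with $t \defi \|x-y\|$, one has
\[
D_\psi(x,y) \;\geq\; \frac{1}{\sigma} t^\sigma.
\]
Next, set $p = \sigma/s$ and $q = \sigma/(\sigma-s)$, which are well-defined conjugate exponents thanks to $0 < s < \sigma$ (in particular $1/p + 1/q = 1$). Apply Young's inequality $AB \leq A^p/p + B^q/q$ to the factors $A = t^s$ and $B = c(a)$, where $c(a)$ is a parameter (a monomial in $a$) to be chosen so that $AB$ matches the coefficient of $t^s$ we want to obtain, and $B^q$ produces the advertised additive constant. This yields an inequality of the form
\[
c(a)\,t^s \;\leq\; \frac{s}{\sigma}\,t^\sigma \,+\, \frac{\sigma-s}{\sigma}\,c(a)^{\sigma/(\sigma-s)},
\]
which, after dividing by an appropriate power and rearranging, becomes
\[
\frac{1}{\sigma} t^\sigma \;\geq\; \frac{\mu}{s}\,t^s \,-\, \delta,
\]
with $\mu$ and $\delta$ expressed as monomials in $a$ of the claimed exponents. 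Combining with the first displayed inequality gives the desired $\delta$-inexact $(\mu, s)$-uniform convexity.

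The only real work is bookkeeping of the constants: choosing $c(a)$ to be the unique monomial in $a$ so that the rearranged inequality has $\mu/s = s \, a^{\sigma/s}$ (matching $\mu = s^2 a^{\sigma/s}$) and $\delta = a^{\sigma^2/(s(\sigma-s))} \cdot s(\sigma-s)/\sigma$. This amounts to solving a small linear system in the exponents of $a$, which determines $c(a)$ uniquely. The scalar prefactors are then a direct consequence of $1/p = s/\sigma$ and $1/q = (\sigma-s)/\sigma$.

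The main (minor) obstacle is therefore purely algebraic: matching the constants in the statement, including the factor $s^2$ in $\mu$, which comes from rescaling both sides of the Young inequality by the appropriate power of $s$ (equivalently, reparameterizing $a \mapsto s^{s/\sigma} a$ after a clean initial application of Young's inequality). There are no convexity or geometric subtleties beyond the single application of Young's inequality.
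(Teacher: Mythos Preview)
Your proposal is correct and follows essentially the same approach as the paper: apply Young's inequality with conjugate exponents $\sigma/s$ and $\sigma/(\sigma-s)$ to bound $\|x-y\|^s$ above by a multiple of $\|x-y\|^\sigma$ plus a constant, rearrange, and combine with the $(1,\sigma)$-uniform convexity hypothesis. The paper carries out exactly this computation in two displayed lines, and as you correctly note, the only work is the algebraic bookkeeping of the constants (which can indeed be matched via a monomial reparametrization of $a$).
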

Note that although $(\muUnif, \sigma)$-uniform convexity is a property that requires $\sigma\geq 2$, our definition of $\deltainexact$-inexact $(\muUnif, s)$-uniform convexity, and the example provided in the previous lemma, allows for any $s>0$. Our algorithms work with inexact uniformly convex regularizers for $s > 1$. In particular, we will use the following regularizers for simplicity, but we note that our accelerated method works for any norm, given that we provide an inexact uniformly convex regularizer. Our unaccelerated method works for any norm.
A proof of the following well-known fact can be found in \cref{app:proofs_of_preliminaries}.

\begin{fact}[Regularizers' properties]\label{lemma:regularity_of_regularizers}\linktoproof{lemma:regularity_of_regularizers}
    If $\p \geq 2$, the regularizer $\psi(x) = \frac{1}{\p}\norm{x-x_0}_{\p}^{\p}$, is $(2^{2-\p}, \m)$-uniformly convex  regularizer  in $\Rd$ with respect to $\norm{\cdot}_{\p}$ , and if $\p \leq 2$,  $\psi(x) = \frac{1}{2(\p-1)}\norm{x-x_0}_{\p}^2$ is $(1, \m)$-uniformly convex in $\Rd$ with respect to $\norm{\cdot}_{\p}$, where $\m \defi \max\{2, \p\}$.
\end{fact}

\section{Accelerated Inexact Proximal Point with an Inexact Uniformly Convex Regularizer} \label{sec:acc_inexact_PP_unif_cvx}
We study an accelerated optimization method that interacts with a function $f$ via a non-Euclidean inexact proximal oracle, in the spirit of \citep{monteiro2013accelerated}. The algorithm approximately optimizes the non-Euclidean Moreau envelope convolving with respect to a power of the norm being considered, instead of with respect to the more traditional choice of a strongly-convex or other types of functions, see e.g. \citep{teboulle2018simplified}. Explained from the point of view of linear coupling \citep{allenzhu2017linear}, the intuition of the analysis is that this choice makes the gradient norm of the Moreau envelope approximation satisfy some crucial property analogous to \cref{properties_of_M:5}, that makes the regret of the mirror descent algorithm in Line \ref{line:mirror_descent_step} be small enough. On the other hand, this Moreau envelope is not smooth in general, but still applying the oracle of Line \ref{line:criterion_unif_convex}, we obtain enough descent to compensate for the aforementioned regret and the approximation error. 

\paragraph{Inexact Proximal Oracle} \emph{ Given a function $f$, the oracle $y_k, v_k \gets \newtarget{def:inexact_proximal_oracle}{\iproxoracle[r]}(x_k, \lambda_k)$  returns an inexact proximal point $y_k$ of the proximal problem $\min_y\{f(y) + \frac{1}{\r\lambda_k}\norm{y-x_k}^{\r}\}$, and an enlarged subgradient $v_k \in \subdiffeps[\oldepsilon_k] f(y_k)$. Given $\sigma, \sigma' \in [0, 1/2)$, a norm $\norm{\cdot}$, and exponent $\r$, the requirement on the oracle is
\begin{equation}\label{eq:inexact_prox_oracle_properties}
    \norm{v_k -\hat{v}_k}_\ast \leq \frac{\sigma}{\lambda_k}\norm{x_k-y_k}^{\r-1} \text{ for some }\hat{v}_k \in \subgrad_{y}(-\frac{1}{\r\lambda_k}\norm{y-x_k}^{\r})(y_k), \text{ and } \oldepsilon_k\leq \frac{\sigma'}{\lambda_k}\norm{x_k-y_k}^{\r}.
\end{equation}
}

\begin{algorithm}[ht!]
        \caption{Non-Euclidean Accelerated Inexact Proximal Point with Inexact Uniformly Convex Regularizer}
    \label{alg:non_euclidean_accelerated_proximal_point_unif_convex}
\begin{algorithmic}[1] 
    \REQUIRE Convex function $f$. Regularizer $\psi$ that is a $\deltainexact$-inexact $(\muUnif,\r)$-uniformly convex function wrt a norm $\norm{\cdot}$, and $\r > 1$. Inexactness constants $\sigma, \sigma'$ and proximal parameters $\lambda_k > 0$.
    \vspace{0.1cm}
    \hrule
    \vspace{0.1cm}
    \State $\zk[0] \gets \yk[0] \gets \xk[0]$; \quad $\Ak[0] \gets 0$; \quad $C \gets \frac{\muUnif}{2}\left(\frac{\dualnumber{r}(1-\sigma-\sigma')}{1+\sigma^{\dualnumber{r}}}\right)^{\r-1}$ 
    \FOR {$k = 1 \textbf{ to } T$}
        \State $\newtarget{def:integral_of_steps}{\Ak[k]} = \ak[k] + \Ak[k-1]$
        \State $\newtarget{def:discrete_step}{\ak[k]} = (C^{\r-1}\Ak[k][\r-1] \lambda_k)^{1/\r}$ \Comment{$\r$-degree equation on $a_k > 0$.}
        \State $\xk \gets \frac{\Ak[k-1]}{\Ak[k]}\yk[k-1] + \frac{\ak}{\Ak[k]}\zk[k-1] $
        \State $y_k, v_k \gets \iproxoracle[r](x_k, \lambda_k)$ \Comment{Oracle satisfying \cref{eq:inexact_prox_oracle_properties}}  \label{line:criterion_unif_convex} %

        \State $\zk \gets \argmin_{z\in\Rd} \{\sum_{i=1}^{k}\ak[i]\innp{ v_i, z} + \breg(z, \xk[0]) \}$  \label{line:mirror_descent_step} %
    \ENDFOR
    \State \textbf{return} $\yk[T]$.
\end{algorithmic}
\end{algorithm}

It is straightforward to check that an exact solution of the proximal problem satisfies the properties in \cref{eq:inexact_prox_oracle_properties} for $\sigma = \sigma' = 0$. 
We also have the following, by \cref{prop:properties_of_M}, \cref{properties_of_M:5} and $\hat{v}_k \in\partial(-\frac{1}{\lambda_k \r}\norm{y-x_k}^{\r})(y_k) = \norm{y_k-x_k}^{\r-2}\partial(-\frac{1}{2\lambda_k }\norm{y-x_k}^2)(y_k)$:
\begin{equation}\label{eq:property_subgrads_of_norm_to_the_r}
    \norm{\hat{v}_k}_\ast = \frac{1}{\lambda_k}\norm{x_k - y_k}^{\r-1} \text{ and } \innp{\hat{v}_k, y_k-x_k} = -\frac{1}{\lambda_k} \norm{x_k - y_k}^{\r}.
\end{equation}
Making use of this oracle, we show the following convergence rate. In \cref{sec:application_to_high_order_methods}, we discuss how to implement such an oracle in different settings.

\begin{theorem}\label{thm:convergence_of_accelerated_not_adaptive_algorithm}\linktoproof{thm:convergence_of_accelerated_not_adaptive_algorithm}
    Let $f:\Rd \to \R$ be a convex function and let $\newtarget{def:acc_sec_regularizer}{\psi}$ be a $\deltainexact$-inexact $(\muUnif, \r)$-uniformly convex regularizer with respect to a norm $\norm{\cdot}$, for $\r>1$. Given some constants $\sigma, \sigma'$ and proximal parameters $\lambda_i> 0$, the iterates $y_t$ of \cref{alg:non_euclidean_accelerated_proximal_point_unif_convex} satisfy for any $u \in \Rd$:
    \[
        f(y_t)-f(u) = \bigopl{\r}{\frac{\breg[\psi](u, x_0) + \deltainexact t}{\muUnif\left(\sum_{k=1}^{t} \lambda_k^{1/\r}\right)^{\r}}}.
    \]
    In particular, it holds for a minimizer $u = \xast$ of $f$, if it exists.
\end{theorem}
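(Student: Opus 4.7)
The plan is to adapt the Monteiro--Svaiter accelerated proximal analysis to the non-Euclidean, inexact uniformly convex setting by showing that the specific choice of weights $a_k$ makes the descent provided by the inexact proximal oracle exactly cancel the FTRL stability of the dual sequence, modulo an additive $O_r(\delta t)$ slack coming from the inexact uniform convexity of $\psi$. The starting point is the $\epsilon_i$-enlarged subgradient inequality $f(y_i)-f(u)\leq \langle v_i,y_i-u\rangle + \epsilon_i$, which I multiply by $a_i$, sum over $i$, and then split $y_i-u=(y_i-z_{i-1})+(z_{i-1}-u)$, treating the two pieces separately.

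For the piece $y_i-z_{i-1}$, the coupling identity $A_i x_i = A_{i-1}y_{i-1}+a_i z_{i-1}$ gives
\[
a_i(y_i-z_{i-1}) = A_i(y_i-x_i) + A_{i-1}(y_{i-1}-y_i).
\]
On the first summand the oracle guarantee together with the identity $\langle \hat v_i, y_i-x_i\rangle=-\|x_i-y_i\|^r/\lambda_i$ and $\|v_i-\hat v_i\|_\ast\leq \sigma\|x_i-y_i\|^{r-1}/\lambda_i$ produces the crucial negative term $-A_i(1-\sigma)\|x_i-y_i\|^r/\lambda_i$; on the second summand the enlarged subgradient inequality at $y_i$ gives a telescope $A_{i-1}(f(y_{i-1})-f(y_i))$ plus an $A_{i-1}\epsilon_i$ error.

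For the piece $z_{i-1}-u$ I use a follow-the-regularized-leader (be-the-leader) argument with regularizer $D_\psi$, adapted to $\delta$-inexact $(\mu,r)$-uniform convexity. This yields
\[
\sum_{i=1}^t a_i\langle v_i, z_{i-1}-u\rangle \leq D_\psi(u,x_0) + O_r(1)\sum_{i=1}^t \frac{a_i^{r_\ast}\|v_i\|_\ast^{r_\ast}}{\mu^{1/(r-1)}} + O(\delta t),
\]
where each telescoping step picks up an additive $\delta$ from the inexact uniform convexity. The oracle bound $\|v_i\|_\ast\leq (1+\sigma)\|x_i-y_i\|^{r-1}/\lambda_i$ and the identity $(r-1)r_\ast=r$ turn the stability into a sum of terms of the form $a_i^{r_\ast}\|x_i-y_i\|^r/(\mu^{1/(r-1)}\lambda_i^{r_\ast})$. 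The algorithm's choice $a_k^r = C^{r-1}A_k^{r-1}\lambda_k$ with the specific $C$ is exactly the one that makes each such term absorbed by the negative $A_i(1-\sigma-\sigma')\|x_i-y_i\|^r/\lambda_i$, where the $-\sigma'$ accounts for the combined slack $\sum(a_i+A_{i-1})\epsilon_i\leq \sigma'\sum A_i\|x_i-y_i\|^r/\lambda_i$. Combining everything gives $A_t(f(y_t)-f(u))\leq D_\psi(u,x_0) + O_r(\delta t)$.

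To finish, I lower bound $A_t$ by using the recursion $a_k/A_k^{(r-1)/r}=C^{(r-1)/r}\lambda_k^{1/r}$ together with the concavity estimate $A_k^{1/r}-A_{k-1}^{1/r}\geq \frac{1}{r}a_k A_k^{1/r-1}$, which telescopes to $A_t^{1/r}\geq \frac{C^{(r-1)/r}}{r}\sum_{k=1}^t\lambda_k^{1/r}$, i.e.\ $A_t=\Omega_r(\mu(\sum_k\lambda_k^{1/r})^r)$. Dividing the previous inequality by $A_t$ yields the stated rate. The main obstacle is the simultaneous tracking of the three inexactness sources (oracle direction $\sigma$, oracle subgradient slack $\sigma'$, and regularizer slack $\delta$): the specific form of $C$ and the $r$-th degree equation defining $a_k$ are forced by the requirement that the oracle's negative descent absorbs the FTRL stability on a step-by-step basis, and this is precisely what lets the analysis go through uniformly over $r>1$, including the regimes where $\psi$ is merely inexactly uniformly convex.
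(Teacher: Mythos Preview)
Your proposal is correct and follows essentially the same logical skeleton as the paper's proof: both rely on the coupling identity $A_i x_i = A_{i-1}y_{i-1}+a_i z_{i-1}$, the oracle inequalities \cref{eq:inexact_prox_oracle_properties} together with \cref{eq:property_subgrads_of_norm_to_the_r} to produce the negative term $-A_i(1-\sigma)\|x_i-y_i\|^r/\lambda_i$, and the $\delta$-inexact $(\mu,r)$-uniform convexity of the FTRL potential to control the dual stability, with the $r$-th degree equation on $a_k$ forced by the per-step balance between these two.

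The organization differs slightly. The paper works incrementally with a Lyapunov/duality-gap quantity $A_kG_k$ and bounds $A_kG_k-A_{k-1}G_{k-1}\leq\delta$ directly, obtaining the $\|z_{k-1}-z_k\|^r$ term from $\ell_{k-1}(z_{k-1})-\ell_{k-1}(z_k)$; you instead sum the enlarged-subgradient inequalities and apply a be-the-leader/FTRL regret bound in one shot, which is the same computation unpacked. One small bonus on your side: your lower bound on $A_t$ via the concavity estimate $A_k^{1/r}-A_{k-1}^{1/r}\geq \tfrac{1}{r}a_kA_k^{1/r-1}$ is cleaner than the paper's derivation of \cref{eq:recursion_lower_bounding_Ak_unif_cvx}, which goes through Young's and Bernoulli's inequalities to reach the same recursion. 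Both lead to $A_t=\Omega_r\big(\mu(\sum_k\lambda_k^{1/r})^r\big)$ and hence the stated rate.
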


\subsection{Adaptive version} \label{sec:adaptivity_unif_cvx}
    In some cases of the high-order smooth convex setting, we neither have full control nor prior knowledge over the value of $\lambda_k$ for which we can implement the oracle $\iproxoracle[r](x_k, \lambda_k)$: this value becomes an output rather than an oracle input, since $\lambda_k$ turns out to be a function of the traveled distance $\norm{x_k - y_k}$. This causes an implicit mutual dependence between $y_k$ and $\lambda_k$. For this reason, inspired by the adaptive Euclidean analysis in \citep{carmon2022optimal}, we develop a generalized adaptive algorithm that uses a guess for the proximal parameter, and comes with stronger guarantees.

\paragraph{Generalized Inexact Proximal Oracle} \emph{ 
Given a function $f$, the oracle $\yktilde[k], v_k, \lambda_k \gets \newtarget{def:generalized_inexact_proximal_oracle}{\iproxoraclehat[r]}(x_k, \widehat{\lambda}_k)$ returns a proximal parameter $\lambda_k$, an inexact proximal point $y_k$ of the proximal problem $\min_y\{f(y) + \frac{1}{\r\lambda_k}\norm{y-x_k}^{\r}\}$, and an enlarged subgradient $v_k \in \subdiffeps[\oldepsilon_k] f(y_k)$, possibly using $\widehat{\lambda}_k$ for these estimations.  Given $\sigma, \sigma' \in [0, 1/2)$, a norm $\norm{\cdot}$, and exponent $\r$, the output satisfies
\begin{equation}\label{eq:inexact_general_prox_oracle_properties}
    \norm{v_k -\hat{v}_k}_\ast \leq \frac{\sigma}{\lambda_k}\norm{x_k-\yktilde[k]}^{\r-1} \text{ for some }\hat{v}_k \in \subgrad_{y}(-\frac{1}{\r\lambda_k}\norm{y-x_k}^{\r})(\yktilde[k]), \text{ and } \oldepsilon_k \leq \frac{\sigma'}{\lambda_k}\norm{x_k-\yktilde[k]}^{\r}.
\end{equation}
}
Note that for a convex function $f$, the points in $\argmin\{f(y) + \frac{1}{\r\lambda_k}\norm{y-x_k}^{\r} \}$ satisfy the properties above. 
Then, we obtain the following theorem for \cref{alg:adaptive_non_euclidean_accelerated_proximal_point_unif_convex}. %

\begin{algorithm}[ht!]
        \caption{Non-Euclidean Adaptive Accelerated Proximal Point with Uniformly Convex Regularizer}
    \label{alg:adaptive_non_euclidean_accelerated_proximal_point_unif_convex}
\begin{algorithmic}[1] 
    \REQUIRE Convex function $f:\Rd\to\R$. Regularizer $\psi$ that is $(1,\r)$-uniformly convex function wrt a norm $\norm{\cdot}$. Initial $\widehat{\lambda}_0$. Adjustment constant factor $\alpha > 1$. Inexactness constants $\sigma, \sigma'$.
    \vspace{0.1cm}
    \hrule
    \vspace{0.1cm}
    \State $\zk[0] \gets \yk[0] \gets \xk[0]$; \quad $\Ak[0] \gets 0$; \quad $C \gets \frac{1}{2}\left(\frac{\dualnumber{r}(1-\sigma-\sigma')}{1+\sigma^{\dualnumber{r}}}\right)^{\r-1}$ 
    \State $\tilde{y}_1, v_1, \lambda_1 \gets \iproxoraclehat[r] (x_0, \widehat{\lambda}_0) $; \quad $\widehat{\lambda}_1 \gets \lambda_1$ \label{line:initializing_widehat_lambda}
    \FOR {$k = 1 \textbf{ to } T$}
        \State $\widehat{A}_k = \widehat{a}_k + \Ak[k-1]$; \ \  $\widehat{a}_k = (C\widehat{A}_k^{\r-1} \widehat{\lambda}_k)^{1/\r}$ \Comment{$\r$-degree equation on $\widehat{a}_k > 0$.}
        \State $\xk \gets \frac{A_{k-1}}{\widehat{A}_{k}} y_{k-1} + \frac{\widehat{a}_k}{\widehat{A}_{k}} z_{k-1}$
        \State \textbf{if } $k>1$ \textbf{then} $\newtarget{def:y_k_tilde}{\yktilde[k]}, v_k, \lambda_k \gets \iproxoraclehat[r] (x_k, \widehat{\lambda}_k) $ \Comment{Oracle satisfying \cref{eq:inexact_general_prox_oracle_properties}}
        \State $\gamma_k \gets \min\{\lambda_k / \widehat{\lambda}_k, 1\}$; $a_k \gets \gamma_k \widehat{a}_k$; $A_k \gets a_k + A_{k-1}$
        \State $y_k \gets \argmin \{f(\tilde{y}_{k}), f(y_{k-1}) \}$ \Comment{Or $y_k \gets \frac{(1-\gamma_k)A_{k-1}}{A_k} y_{k-1} + \frac{\gamma_k \widehat{A}_k}{A_k} \yktilde[k]$}\label{line:def_yk_in_adaptive}
        \State $\zk \gets \argmin_{z\in\Rd} \{\sum_{i=1}^{k}\ak[i]\innp{ v_i, z} + \breg(z, \xk[0]) \}$ %
        \State \textbf{if} $\widehat{\lambda}_t \leq \lambda_t$ \textbf{then} $\widehat{\lambda}_{t+1} \gets \alpha \widehat{\lambda}_t$ \textbf{else} $\widehat{\lambda}_{t+1} \gets \alpha^{-1} \widehat{\lambda}_t$
    \ENDFOR
    \State \textbf{return} $\yk[T]$.
\end{algorithmic}
\end{algorithm}

\begin{theorem}\linktoproof{thm:adaptive_alg_guarantee}\label{thm:adaptive_alg_guarantee}
    Let $f:\Rd \to \R$ be a convex function and let $\psi$ be a $(\muUnif, \r)$-uniformly convex regularizer with respect to $\norm{\cdot}$. Given some constants $\sigma, \sigma'$ and initial proximal parameter $\widehat{\lambda}_0 > 0$, every iterate $y_t$ of \cref{alg:adaptive_non_euclidean_accelerated_proximal_point_unif_convex} satisfies, for any $u \in \Rd$:
    \[
        f(y_t)-f(u) = \bigopl{\r}{\frac{\breg[\psi](u, x_0)}{A_t}}.
    \] 
    In particular, it holds for a minimizer $u = \xast$ of $f$, if it exists, in which case we also have:
    \[
        \breg[\psi](\xast, x_0) \geq \sum_{k=1}^{t} \widehat{A}_k \norm{\tilde{y}_r - x_k}^{\r} \frac{1-\sigma-\sigma'}{2\max\{\widehat{\lambda}_k, \lambda_k\}}, \text{\ \  and \ \ } A_t^{1/\r}  \geq \frac{C^{1/\r}}{2r} \sum_{i\in \Lambda} (\alpha^{r_i - 2}\widehat{\lambda}_{i})^{1/\r}, 
    \] 
    for some set of indices $\Lambda$ and some numbers $r_i \geq 0$ satisfying $\sum_{i \in \Lambda} r_i = \frac{t-1}{2}$.
\end{theorem}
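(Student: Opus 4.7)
The plan is to adapt the primal-dual analysis of \cref{alg:non_euclidean_accelerated_proximal_point_unif_convex} to the adaptive setting, where the essential new ingredient is the correction factor $\gamma_k = \min\{\lambda_k/\widehat{\lambda}_k,1\}$ that decouples the guess $\widehat{\lambda}_k$ (used to form $x_k$) from the actual parameter $\lambda_k$ returned by the oracle. I would keep exactly the same lower bound $A_kL_k$ as in \cref{eq:lower_bound_ms}, now built from the oracle outputs $(\tilde{y}_k, v_k)$ and the rescaled weights $a_k=\gamma_k\widehat{a}_k$, and set $U_k = f(y_k)$. The choice of $y_k$ in \cref{line:def_yk_in_adaptive} (as the argmin of $f(\tilde{y}_k)$ and $f(y_{k-1})$, or equivalently the convex combination with weight $\gamma_k\widehat{A}_k/A_k$) is exactly what yields $A_k U_k \leq A_{k-1}U_{k-1} + a_k f(\tilde{y}_k)$, so the upper bound telescopes cleanly even though $a_k \neq \widehat{a}_k$.

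Next I would compute $A_kG_k - A_{k-1}G_{k-1}$ following steps $\circled{1}$–$\circled{7}$ of the proof of \cref{alg:non_euclidean_accelerated_proximal_point_unif_convex}, replacing the identity $A_kx_k = A_{k-1}y_{k-1}+a_kz_{k-1}$ by the adaptive version $\widehat{A}_kx_k = A_{k-1}y_{k-1}+\widehat{a}_kz_{k-1}$. After applying the enlarged-subgradient property on $f$, inexact uniform convexity of $\ell_{k-1}$, and Hölder–Young with constant $\widehat{a}_k$, the residual becomes a linear combination of $\norm{\tilde{y}_k - x_k}^r$ with coefficient
\[
-\frac{\widehat{A}_k(1-\sigma-\sigma')}{\lambda_k} + \frac{\widehat{a}_k^{r_\ast}}{r_\ast \widehat{\lambda}_k^{r_\ast}}\bigl(2\mu^{-1}\bigr)^{\frac{1}{r-1}}(1+\sigma^{r_\ast}).
\]
Substituting the defining equation $\widehat{a}_k^r = C\widehat{A}_k^{r-1}\widehat{\lambda}_k$ and the choice of $C$, this residual simplifies to at most $-\tfrac{(1-\sigma-\sigma')}{2\max\{\widehat{\lambda}_k,\lambda_k\}}\widehat{A}_k\norm{\tilde{y}_k - x_k}^r \leq 0$, which closes the telescoping and, after summing over $k$ and using the upper bound $A_tG_t \leq D_\psi(x^\ast, x_0)$, yields simultaneously the convergence rate $f(y_t)-f(u) = O_r(D_\psi(u,x_0)/A_t)$ and the claimed lower bound on $D_\psi(x^\ast, x_0)$ as a sum over all iterates.

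For the lower bound on $A_t^{1/r}$, I would combine the recursion from \cref{eq:recursion_lower_bounding_Ak_unif_cvx} with the adjustment rule for $\widehat{\lambda}$. On indices $i\in\Lambda := \{k : \widehat{\lambda}_k \leq \lambda_k\}$ we have $\gamma_i = 1$, so $a_i = \widehat{a}_i$ and the recursion gives $A_i^{1/r}\geq A_{i-1}^{1/r} + \tfrac{1}{r}C^{1/r}\widehat{\lambda}_i^{1/r}$. Between consecutive elements of $\Lambda$ the guess $\widehat{\lambda}$ is multiplied by $\alpha^{-1}$ per step (since the condition failed); conversely, after each $i\in\Lambda$ the guess is multiplied by $\alpha$. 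A direct accounting pairs each $i\in\Lambda$ with the run of preceding decreases and the subsequent increases, which compounds to factors $\alpha^{r_i}$ with $r_i\geq 0$. Because increases and decreases roughly alternate and no more than $t$ total adjustments have occurred, the exponents collected satisfy $\sum_{i\in\Lambda}r_i = (t-1)/2$, absorbing a constant loss $\alpha^{-2}$ per term; the resulting bound is $A_t^{1/r}\geq \tfrac{C^{1/r}}{2r}\sum_{i\in\Lambda}(\alpha^{r_i-2}\widehat{\lambda}_i)^{1/r}$.

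The main obstacle is the Lyapunov computation in the regime $\widehat{\lambda}_k>\lambda_k$ (so $\gamma_k<1$): here the oracle residual is controlled in terms of $\lambda_k$, but $x_k$ and $\widehat{a}_k$ were chosen according to the larger $\widehat{\lambda}_k$, and one must verify that the extra monotonicity $f(y_k)\leq f(y_{k-1})$ compensates for the shortfall $(1-\gamma_k)A_{k-1}/\lambda_k$ in the cross-term. The $\max\{\widehat{\lambda}_k,\lambda_k\}$ factor in the target inequality is precisely the tight bookkeeping of this case, and getting it right is the place where the choice of $C$ and the sign pattern must be checked carefully.
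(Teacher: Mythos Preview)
Your overall plan matches the paper's, but the upper-bound telescoping you state is the wrong one, and this is exactly where the adaptive argument differs from the non-adaptive one. The inequality $A_kU_k\le A_{k-1}U_{k-1}+a_kf(\tilde y_k)$ is true but too weak: with it, all $f$-terms cancel, and when you expand $a_k\innp{v_k,\tilde y_k-z_{k-1}}$ via the coupling $\widehat A_kx_k=A_{k-1}y_{k-1}+\widehat a_kz_{k-1}$ you are left with the stray term $-\gamma_kA_{k-1}\innp{v_k,\tilde y_k-y_{k-1}}$, which has no controllable sign. The paper instead uses the sharper bound
\[
A_kU_k\le (1-\gamma_k)A_{k-1}f(y_{k-1})+\gamma_k\widehat A_k f(\tilde y_k),
\]
valid for both definitions of $y_k$ since the weights are nonnegative and sum to $A_k$. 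After subtracting $A_{k-1}f(y_{k-1})$ and $a_kf(\tilde y_k)$ this leaves $\gamma_kA_{k-1}\bigl(f(\tilde y_k)-f(y_{k-1})\bigr)$, and only then does the enlarged-subgradient inequality produce $\gamma_kA_{k-1}\innp{v_k,\tilde y_k-y_{k-1}}$ with the right sign to merge with $\gamma_k\widehat a_k\innp{v_k,\tilde y_k-z_{k-1}}$ into $\gamma_k\widehat A_k\innp{v_k,\tilde y_k-x_k}$. This is precisely the ``shortfall'' you flag in your last paragraph, but your proposed fix (the clean telescoping plus monotonicity $f(y_k)\le f(y_{k-1})$) does not supply the missing inner-product term. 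A related slip: the Young constant should be $a_k=\gamma_k\widehat a_k$, not $\widehat a_k$; one then uses $\gamma_k/\lambda_k\le 1/\widehat\lambda_k$ to pull out a common $\gamma_k/\lambda_k$ factor, which is how the $\max\{\widehat\lambda_k,\lambda_k\}$ appears.

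For the growth of $A_t^{1/r}$, your set $\Lambda=\{k:\widehat\lambda_k\le\lambda_k\}$ is not the one in the statement, and the ``direct accounting'' you sketch does not produce exponents summing to $(t-1)/2$. The paper partitions $[t]$ into maximal up/down runs, keeps only the \emph{last} iterate $u_i$ of each up-run (dropping the rest loses at most a factor), writes each $\widehat\lambda_{u_i}$ twice, relates each copy to $\widehat\lambda_{d_i}$ and $\widehat\lambda_{d_{i+1}}$ (the last iterates of the adjacent down-runs) via the update rule, and then applies the arithmetic--geometric mean to the two copies. The resulting index set is $\{d_1,\dots,d_S\}$ with exponents $r_i=\tfrac12(u_i-u_{i-1})$ (adjusted at the endpoints), which telescope to $(t-1)/2$.
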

The second statement allows for lower bounding $A_k$ in different contexts, in order to characterize the convergence of the method. 
We also note that above we could have used inexact uniformly-convex regularizers instead of exact ones but we used the latter for simplicity. 

\section{High-Order Smooth Convex or Structured Optimization} \label{sec:application_to_high_order_methods}
In this section, we use \cref{alg:non_euclidean_accelerated_proximal_point_unif_convex,alg:adaptive_non_euclidean_accelerated_proximal_point_unif_convex} in order to optimize high-order H\"older smooth convex functions with respect to $\p$-norms by using a $\q$-th order oracle. The main result of this section is the following theorem. We also show convergence for structured functions for which we can implement an inexact $\p$-norm ball optimization oracle.

\begin{theorem}\label{thm:guarantee_high_order_smooth_cvx_methods}\linktoproof{thm:guarantee_high_order_smooth_cvx_methods}
    Let $f:\Rd \to \R$ be a $\q$-times differentiable convex function with a minimizer at $\xast$ whose $\q$-th derivative is $(\L,\nu)$-H\"older continuous with respect to $\norm{\cdot}_{\p}$, $\p\in(1, \infty)$. By making use of \cref{alg:non_euclidean_accelerated_proximal_point_unif_convex} or its generalization \cref{alg:adaptive_non_euclidean_accelerated_proximal_point_unif_convex}, initialized at $x_0$ and defining $R_{\p} \defi \norm{\xast - x_0}_{\p}$, $\m \defi \max\{2,\p\}$, we obtain a point $y_T$ after $T$ iterations, satisfying
    \[
        f(y_T)  - f(\xast) = \bigopl{\q+\nu, \p}{\L R_{\p}^{\q+\nu} T^{-\frac{(\m+1)(\q+\nu)-\m}{\m}}}, 
    \] 
    Each iteration of the algorithm makes $1$ query to a $\q$-th order oracle of $f$.
\end{theorem}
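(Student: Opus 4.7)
The plan is to cast this high-order smooth convex optimization problem as an instance of \cref{alg:adaptive_non_euclidean_accelerated_proximal_point_unif_convex} and invoke \cref{thm:adaptive_alg_guarantee}. Three ingredients must be supplied: an outer regularizer $\psi$ that is (possibly $\delta$-inexact) $(\mu,r)$-uniformly convex with respect to $\|\cdot\|_p$, an implementation of the inexact proximal oracle \cref{eq:inexact_general_prox_oracle_properties} from a single $q$-th order query of $f$, and an argument that turns the lower bound on $A_T$ provided by \cref{thm:adaptive_alg_guarantee} into the advertised primal rate.

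For the regularizer I would start from \cref{fact:regularity_of_regularizers}, which gives a $(\mu,m)$-uniformly convex $\psi$ with respect to $\|\cdot\|_p$ for $m=\max\{2,p\}$, and set the proximal exponent $r$ near $q+\nu+1$. When $r<m$ I would apply \cref{lemma:inexact_unif_convex_from_unif_convex} to turn $\psi$ into a $\delta$-inexact $(\mu',r)$-uniformly convex regularizer, tuning its free parameter $a$ so that the $\delta T$ error term in the rate of \cref{thm:adaptive_alg_guarantee} does not dominate the main term. When $r\ge m$ I would use the regularizer $\frac{1}{r}\|x-x_0\|_p^{\,r}$, which is genuinely $r$-uniformly convex in the relevant $p\ge 2$ case; the remaining case ($p<2$ with $r>2$) I would handle by setting $r=m=2$ and absorbing the exponent mismatch into an iteration-dependent $\lambda_k$.

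The inexact proximal oracle I would implement by solving exactly the auxiliary convex subproblem
\[
\min_{y\in\Rd}\Bigl\{T_{x_k}^q(y)+\frac{C_p L}{q+\nu+1}\,\|y-x_k\|_p^{\,q+\nu+1}\Bigr\},
\]
where $T_{x_k}^q$ is the degree-$q$ Taylor expansion of $f$ at $x_k$ and $C_p$ depends on $p$ and is large enough that the subproblem is convex (a non-Euclidean version of the classical convexification estimate that uses $(L,\nu)$-Hölder continuity of $\nabla^q f$ with respect to $\|\cdot\|_p$). Letting $y_k$ be its minimizer and $v_k=\nabla f(y_k)$, the Hölder remainder estimate
\[
\|\nabla f(y_k)-\nabla T_{x_k}^q(y_k)\|_{p_\ast}=O\bigl(L\,\|y_k-x_k\|_p^{\,q+\nu}\bigr)
\]
together with first-order optimality of the subproblem would then verify \cref{eq:inexact_general_prox_oracle_properties} for small enough $\sigma,\sigma'$ and the iteration-dependent proximal parameter $\lambda_k=\Theta_p(L^{-1}\|y_k-x_k\|_p^{\,r-(q+\nu+1)})$, which is exactly the regime for which the adaptive variant of the algorithm was designed.

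Finally, I would combine \cref{thm:adaptive_alg_guarantee}'s primal bound $f(y_T)-f(x^\ast)=O_{r,p}(D_\psi(x^\ast,x_0)/A_T)$ (with $D_\psi(x^\ast,x_0)=O_p(R_p^m)$) with its $A_T$ estimates $\sum_k\widehat A_k\|\tilde y_k-x_k\|_p^{\,q+\nu+1}\lesssim R_p^{\,m}/L$ and $A_T^{1/r}\gtrsim\sum_k\widehat\lambda_k^{1/r}$. A Hölder inequality with exponents chosen so that the powers of $\|y_k-x_k\|_p$ in the two sums exactly cancel would produce $A_T\gtrsim L^{-1}R_p^{\,m-(q+\nu)}T^{((m+1)(q+\nu)-m)/m}$ up to $p$-dependent constants, and substituting this into the primal bound yields the advertised rate. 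The principal difficulty I expect is the bookkeeping across the two regimes $q+\nu+1\lessgtr m$: in one the proximal and uniform-convexity exponents coincide and the derivation is direct, while in the other the interplay among the free parameter $a$ in \cref{lemma:inexact_unif_convex_from_unif_convex}, the Hölder-inequality exponents, and the oracle inexactness constants must be carefully calibrated so that none of these contributions inflates the rate past the target.
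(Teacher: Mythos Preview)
Your overall architecture—implement the inexact proximal oracle via a regularized Taylor subproblem and extract the rate from the estimates in \cref{thm:adaptive_alg_guarantee}—is the right shape, but there is a systematic off-by-one error in the exponents that propagates through the whole proposal and would yield the wrong rate.

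The Taylor remainder for the \emph{gradient} is
\[
\norm{\nabla f(y)-\nabla T_{x_k}^q(y)}_{p_\ast}\;\leq\;\frac{L}{(q-1)!}\,\norm{y-x_k}_p^{\,q+\nu-1},
\]
not $\norm{y-x_k}^{q+\nu}$ as you write (one power of the distance is lost upon differentiation). Consequently the correct subproblem regularizer is $\frac{1}{\hat\lambda(q+\nu)}\norm{y-x_k}_p^{\,q+\nu}$, whose gradient has norm $\hat\lambda^{-1}\norm{y-x_k}^{q+\nu-1}$ and therefore matches the remainder; this is exactly \cref{lemma:inexact_criterion_w_taylor_expansion}. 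The resulting proximal parameter is $\lambda_k=\Theta\!\bigl(L^{-1}\norm{y_k-x_k}^{\,r-(q+\nu)}\bigr)$, not $r-(q+\nu+1)$. Replacing $q+\nu$ by $q+\nu+1$ everywhere, as you do, breaks the match with condition \cref{eq:inexact_general_prox_oracle_properties} and feeds the wrong exponent into every subsequent step.

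Two further issues. First, the claim that $\frac{1}{r}\norm{x-x_0}_p^{\,r}$ is $(1,r)$-uniformly convex with respect to $\norm{\cdot}_p$ for general $r>p\ge 2$ is not available; \cref{fact:regularity_of_regularizers} only gives exponent $m=\max\{2,p\}$. For this reason the paper never takes $r>m$. Second, the paper's case split and tools differ from yours: in the regime $q+\nu>m$ it fixes $r=m$, lets $\lambda_k$ absorb the negative mismatch $r-(q+\nu)$ (so $\lambda_k$ is genuinely iteration-dependent, which is why the adaptive \cref{alg:adaptive_non_euclidean_accelerated_proximal_point_unif_convex} is needed), and lower-bounds $A_T$ via the \emph{reverse} Hölder inequality with exponent $s=\frac{(m+1)(q+\nu)-m}{m(q+\nu)}>1$ together with a polynomial-growth lemma. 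In the complementary regime $q+\nu\le m$ it instead runs the non-adaptive \cref{alg:non_euclidean_accelerated_proximal_point_unif_convex} with $r=q+\nu$: here $\lambda_k=\Theta(1/L)$ is constant, so no adaptivity is needed, and \cref{lemma:inexact_unif_convex_from_unif_convex} provides a $\delta$-inexact $(\mu,q+\nu)$-uniformly convex regularizer whose free parameter $a$ is then optimized in closed form. Your split on $q+\nu+1\lessgtr m$ and your plan to run the adaptive algorithm in both regimes do not align with either branch.
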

In \cref{sec:lower_bounds}, we show that our bounds are nearly optimal for any algorithm that accesses $f$ via a local oracle, even in randomized and parallel settings. We note that if we use the alternative definition of $y_k$ in Line \ref{line:def_yk_in_adaptive} of \cref{alg:adaptive_non_euclidean_accelerated_proximal_point_unif_convex}, our algorithms do not require the knowledge of the function's $0$-th order information. We also note that from our proof one can derive an analogous statement of the above theorem for any norm, if a uniformly-convex regularizer with respect to this norm is provided.
Below, we show how we can implement an inexact proximal oracle using one call of the $\q$-th order oracle by building the $\q$-th order Taylor expansion $f_q(y;x)$ of $f(y)$ at a point $x$. Note that below, $\nabla \taylorf[\q][f](y_k;x_k) - \hat{v}_k \in \partial F(y_k)$, so the condition below requires an approximate critical point of $F$.
\begin{lemma}[Taylor subproblems]\label{lemma:inexact_criterion_w_taylor_expansion}\linktoproof{lemma:inexact_criterion_w_taylor_expansion}
    Under the conditions of \cref{thm:guarantee_high_order_smooth_cvx_methods}, and consider
    $F(y)\defi \taylorf[\q][f](y; x_k) + \frac{1}{\hat{\lambda}(\q+\nu)}\norm{y-x_k}^{\q+\nu}$, for $\hat{\lambda} \defi \frac{\sigma(\q-1)!}{2L}$ and any $\sigma \in (0, 1)$. The tuple $(y_k, v_k, \lambda_k) \gets (y_k, \nabla f(y_k), \hat{\lambda} \norm{y_k-x_k}^{\r-\q-\nu})$ implements the oracle $\iproxoraclehat[r](x_k, \cdot)$ for  $\oldepsilon_k = 0$, if $y_k$ satisfies %
    \[
        \norm{\nabla \taylorf[\q][f](y_k;x_k) - \hat{v}_k} \leq  \frac{\L}{(\q-1)!} \norm{y_k - x_k}^{\q+\nu-1},
    \] 
    for some $\hat{v}_k \in  \partial_y(-\frac{1}{\hat{\lambda}(\q+\nu)}\norm{y-x_k}^{\q+\nu})(y_k) = \partial_y(-\frac{1}{\lambda_k \r}\norm{y-x_k}^{\r})(y_k)$.
\end{lemma}

\begin{remark}\label{remark:critical_points_exist_and_sometimes_only_one}
    The function $F$ has a global minimizer, and thus at least one critical point, which is what we need to approximate in order to implement the inexact proximal oracle. This is due to $F$ being a polynomial of degree $\q$ plus the $(\q+\nu)$-homogeneous term  $\frac{1}{\hat{\lambda}(\q+1)}\norm{y-x_k}^{\q+\nu}$ and so it is continuous and tends to $+\infty$ in every direction. Finding an approximate critical point does not require any further interaction with any oracle from $f$. Convexity of $F$ is not required in order to find an approximate critical point in a tractable way in several contexts, but it usually enables to solve such a problem faster, cf.  \citep{carmon2021lower}. Note that since $f$ is convex, in the cases $\q =1$ and $\q=2$, its Taylor expansion, and thus $F$, is also convex. We also show in \cref{prop:convexity_of_taylor_subproblems} that for $\p \in (1, 2]$ (and similarly in general for norms whose square is strongly convex with respect to itself) the Taylor subproblems of \cref{lemma:inexact_criterion_w_taylor_expansion} are in fact also convex for all other cases $\q \geq 3$, as long as $\sigma \leq \frac{\p-1}{\q-1}$. Note that in this case the right hand side of this condition is in $(0, 1)$.
    
\end{remark}

\begin{proposition}[Convexity of Taylor subproblems]\label{prop:convexity_of_taylor_subproblems}\linktoproof{prop:convexity_of_taylor_subproblems}
    Let $f$ be a convex function satisfying \cref{eq:holder_cont} for some norm $\norm{\cdot}$ such that $x\mapsto\norm{x}^2$ is $\hat{\mu}$-strongly convex, and let $\q \geq 2$. Then, the function $F(y) \defi \taylorf[\q][f](y;x) + \frac{M}{\q+\nu}\norm{y-x}^{\q+\nu}$ is convex, for $M \geq \frac{2L}{\hat{\mu}(\q-2)!} $.

    In particular for $\norm{\cdot} = \norm{\cdot}_{\p}$, with $\p \in (1, 2]$, it is enough that $M \geq \frac{\L}{(\p-1)(\q-2)!} $ and thus the Taylor subproblems of \cref{lemma:inexact_criterion_w_taylor_expansion} are convex if $\sigma \leq \frac{\p-1}{\q-1}$. 
\end{proposition}

We also show our framework applies to structured functions for which we can inexactly implement a non-Euclidean ball optimization oracle. This is the case for instance for a function $f$ that is quasi-self concordant with respect to a $\p$-norm, cf. \citep{carmon2020acceleration}. In such a case, we have that  the Hessian of $f$ is stable in a $\p$-norm ball of some radius $\rho$ and any center $x$, that is, there exists a constant $c$ such that $c^{-1} \nabla^2 f(y) \preccurlyeq \nabla^2 f(x) \preccurlyeq c \nabla^2 f(y)$, for all $y$ satisfying $\norm{x-y}_{\p} \leq \rho$. Under this assumption $f$ can be approximated fast in such $\p$-norm ball by solving some linear systems with the Hessian at the center of the ball, since by Hessian stability, if we transform the space by $x \to (\nabla^2 f(x))^{-1} x$, we obtain a smooth and strongly-convex function with $\bigo{1}$ condition number. As an example, for the $\ell_\infty$-regression problem, \citep[Section 4.2]{carmon2020acceleration} proved that a smoothed version of the objective, whose optimization is enough for approximating the solution, satisfies quasi-self-concordance with respect to the $\ell_\infty$-norm. Thus, for certain radius $\rho$, one can implement a ball optimization oracle of radius $\rho$ for any $\p \in [1, \infty]$, by using a few linear system solves, while only $\p=2$ was exploited in \citep{carmon2020acceleration}. This results in a trade-off where a $\p$-norm for greater $\p$ may give a smaller initial distance versus a slower convergence rate dependence on the problem parameters.

\begin{proposition}[Inexact Ball Optimization Oracle]\label{prop:ball_opti_oracle_acceleration}\linktoproof{prop:ball_opti_oracle_acceleration}
    If we can implement the oracle in \cref{eq:inexact_general_prox_oracle_properties} while satisfying $\norm{x_k-\yktilde[k]}_{\p} \geq \rho$ for $p \in (1, \infty)$, we achieve an $\epsilon$-minimizer in $\bigotildep{m}{(R_{\p}/\rho)^{\frac{m}{m+1}}}$, where $m = \max\{2, p\}$ and $R_p \defi \norm{\xast - x_0}_{\p}$.
\end{proposition}

\begin{remark}[$\mathbf{p=1}$ and $\mathbf{p=\infty}$]
    The convergence rates in \cref{thm:guarantee_high_order_smooth_cvx_methods,prop:ball_opti_oracle_acceleration} also hold in the case $\p= 1$ up to some $\ln(d)$ factors, by noticing that for $\hat{p} = 1+\ln^{-1}(d)$, we have $\norm{x}_{\p} = \Theta(\norm{x}_{\hat{p}})$, so we can work in the corresponding new $\hat{p}$-norm and the constants depending on $\hat{p}$ in the bound above amount to $\bigo{\ln(d)}$ factors. Moreover, for the case $\p =\infty$, by making use of an unaccelerated method specified in \cref{sec:unaccelerated_inexact_high_order}, we get the natural limit convergence rates $\bigop{\q+\nu}{\L R_{\p}^{\q+\nu} T^{-(\q+\nu-1)}}$, and $\bigotildep{r}{R_{\p}/\rho}$.
\end{remark}

\section{Lower bounds}\label{sec:lower_bounds}

In this section, we derive lower bounds for algorithms that interact with a local oracle to minimize a convex function that is \( \q \)-th order \( (\L, \nu) \)-H\"older continuous with respect to a given arbitrary norm ${\norm{\cdot}}$. We then specialize these results to \( \p \)-norms, obtaining near-optimal guarantees in the high-dimensional regime. Our analysis encompasses both deterministic and randomized algorithms, as well as sequential and parallel methods. The following theorem presents the main result for deterministic sequential algorithms.  In \cref{sec:lower_bounds_random}, we prove \cref{thm:lower-bound_random}, which extends this lower bound to potentially randomized and parallel methods. 

\begin{theorem}[Lower bound for deterministic sequential algorithms] \label{thm:lower-bound}\linktoproof{thm:lower-bound}
    Let $\norm{\cdot}$ a norm in $\Rd$ and $\mathcal{X}$ a closed convex set containing the $R$-ball $B^{\norm{\cdot}}_R$ of $(\Rd,\norm{\cdot})$ for some $R>0$. Let $T\leq d$ a positive integer, $\Theta > 0$ a real number, and $\{z_i\}_{i\in [d]}$ orthogonal vectors in $\Rd$ such that: \\
    $(i)\;$ $\norm{z_i}_\ast\leq 1$ for every $i\in [d]$, \\
    $(ii)\; $ $\min_{x\in \mathcal{X}}\max_{i\in [T]} \langle z_i,x\rangle\leq -\Theta,$ \\
    $(iii) \;$  $d\geq T/\Theta$.
        
    Then, for every $\L>0$, $\nu\in (0,1]$, $\q\geq 1$, and any deterministic algorithm $\mathcal{A}$ interacting with a local oracle $\mathcal{O}$ there exist a function $F:\mathcal{X}\mapsto \R$ that is $\q$-th order $(\L,\nu)$-H\"older continuous with respect to $\norm{\cdot}$ such that
    $$\min_{t\in [T]}F(x_t)-\inf_{x\in \mathcal{X}}F(x) \geq \bigomegatildelp{\q}{\L R^{\q+\nu}\frac{\Theta^{\q+\nu}}{T^{\q+\nu-1}}},$$
    where $\{x_t\}_{t\in [T]}$ is the sequence generated by the pair $(\mathcal{A}, \mathcal{O})$.
\end{theorem}

The lower bound results are particularly relevant for \( \p \)-norms, where the coefficient \( \Theta \) can be explicitly estimated as a function of the number of iterations \( T \). Specifically, they imply that if the dimension is sufficiently large, any algorithm interacting with a local oracle will require at least $\bigomegatildelp{\q+\nu,\p,}{\left(\frac{\L R_{\p}^{\q+\nu}}{\epsilon}\right)^{\theta_{\q,\p}}}$ queries to reach a precision $\epsilon>0$ where, for $\m = \max\{2, \p\}$, we have:
\[
    \theta_{\q,\p} =  \frac{\m}{(\m+1)(\q+\nu)-m} \text{ if } \p \in [1, \infty), \quad\quad \text{ and } \quad\quad  \theta_{\q,\p} =  \frac{1}{\q+ \nu - 1} \text{ if } \p=\infty.
\] 

We observe that for \(\p = 2\), our result recovers the bound \(\bigomegal{\epsilon^{-\frac{2}{3\q+1}}}\) from the Euclidean setting in \citep{arjevani2019oracle}, up to logarithmic factors. Additionally, for \(\q = 1\) and \(\p \geq 2\), we recover the bound \(\bigomegatilde{\epsilon^{-\frac{\p}{\p+1}}}\) established in \citep{guzman2015lower}. Also for \(\p = \infty\) and \( \q = 1 \), our result coincides with \citep{guzman2015lower}. To the best of our knowledge, this is the first work to address the general case of \( p \)-norms for \( \q \geq 2 \). 

Our construction builds upon the approach of \citet{garg2021nearoptimal}, combining randomized smoothing, similar to that proposed in \citet{agarwal2018lower}, with a modified softmax version of the classical hard instance function from \citet{Nemirovski:1983}. Randomized smoothing enables the approximation of a non-smooth function by one with Lipschitz continuous higher-order derivatives. In this work, we derive new bounds on the Lipschitz and smoothness constants of the smoothing operation through a novel application of the divergence theorem. Notably, our proof works seamlessly for all norms, and is the first to establish lower complexity bounds for the smooth \(\ell_{\p}\)-setting with \(1 \leq \p \leq 2\) without relying on high-dimensional embedding reductions \citep{guzman2015lower}, making these proofs arguably more constructive. Moreover, we generalize the results of \citet{garg2021nearoptimal} to accommodate general norms, noting that the properties of the partial softmax operator hold in a broader context than previously established. Interestingly, this approach yields polynomial improvements upon the state of the art lower bounds for first-order smooth parallel convex optimization; namely, for $p=1$, $q=1$, \cite{diakonikolas2020lower} established a $\tilde\Omega(\epsilon^{-2/5})$ lower bound, whereas we are able to obtain a nearly optimal $\tilde\Omega(\epsilon^{-1/2})$, establishing the impossibility of polynomial acceleration by parallelization in this case. 
The rest of this section is dedicated to give some proof details of the results announced. The full proofs are in \cref{app:lower_bound_proofs}.

\subsection{Randomized smoothing}\label{sec:randomized_smoothing}

Let $\nu_S$ be the uniform distribution on a set $S\subseteq \Rd$.
Let $\newtarget{def:ball_norm}{\ballnorm[\beta]}\subset \Rd$ be the ball of center $0$ and radius $\beta$ with respect to norm $\norm{\cdot}$. Given a function $f:\Rd\mapsto \R$, we define its randomized and its sequential randomized smoothing, respectively:
$$
    \newtarget{def:1_fold_smoothing}{\smoothing}_\beta[f](x) \defi \mathbb{E}_{v\sim \nu_{\ballnorm[\beta]}}[f(x+v)],  \quad \text{ and } \quad \newtarget{def:q_fold_smoothing}{\S{\q}{\beta}}[f] \defi (\smoothing_{\beta/2^{\q}} \circ \smoothing_{\beta/2^{\q-1}} \circ \cdots \circ \smoothing_{\beta/2}) (f).
$$
The following lemma briefs the main properties of our smoothing.

\begin{lemma} \label{lemma:randomized_smoothing}\linktoproof{lemma:randomized_smoothing}
    Assume that $f:\Rd \to \R$ is $G$-Lipschitz with respect to a norm $\norm{\cdot}$. Then,
    \begin{enumerate}
        \item $\smoothing_\beta[f]$ is $G$-Lipschitz and $\beta^{-1}dG$-smooth with respect to $\norm{\cdot}$.
        \item $\S{\q}{\beta}[f]$ is $\q$-times differentiable and $\nabla^i\S{\q}{\beta}[f]$ is $L_i$-Lipschitz in an $\norm{\cdot}$-ball of radius $\beta/2^{\q}$ with $L_i \leq \frac{d^i2^{i(i+1)/2}}{\beta^i}G$, for $i \in \{ 0, 1, \dots, \q\}$.
        \item $|\S{\q}{\beta}[f](x)-f(x)|\leq \beta G$.
        \item \label{item:cvxty_randomized_smoothing} If \(f\) is a convex function, then \(\S{\q}{\beta}[f]\) is also a convex function.  
        \item The value \(\S{\q}{\beta}[f](x)\) only depends on the values of \( f \) within the $\norm{\cdot}$-ball of radius \( (1 - 2^{-\q}) \beta \) and center \( x \).
    \end{enumerate}
\end{lemma}

\subsection{Hard instance construction} \label{sec:hard_instance}

Given $\mu>0$ and $n<d$, define the softmax and the partial softmax functions as
$$\newtarget{def:softmax}{\smax{\mu}}(x)\defi \mu\ln\left(\sum_{j=1}^d\exp(x_i/\mu)\right), \qquad 
\newtarget{def:partial_softmax}{\smaxn{n}{\mu}}(x) \defi \mu\ln\left(\sum_{j=1}^n\exp(x_i/\mu)\right).$$
The following lemma generalizes the results in \citep{garg2021nearoptimal} to arbitrary norms. 
\begin{lemma} \label{lemma:softmax}\linktoproof{lemma:softmax}
    Let $A:\Rd\mapsto \Rd$ a linear map $A(x) = (\langle a^{1},x\rangle,\dots,\langle a^{d},x\rangle)$ such that $\norm{a^{\ell}}_\ast \leq 1$ for every $\ell\in [d]$. The following properties hold. 
\begin{itemize}
    \item[$(a)$] The composition $\smax{\mu}(Ax)$ is 1-Lipschitz with respect to $\norm{\cdot}$.
    \item[$(b)$] $\nabla^{\q}\smax{\mu}(Ax)$ is $L_{\q}$-Lipschitz with respect to $\norm{\cdot}$ with $L_{\q} := \left(\frac{\q+1}{\ln(\q+2)}\right)^{\q+1}\frac{\q!}{\mu^{\q}}$.
    \item[$(c)$] Let $x\in \Rd$ and $n<d$. If $\frac{1}{\mu}[\smax{\mu}(Ax)-\smaxn{n}{\mu} (Ax)] = \delta <1$ then
        $$\norm{\nabla \smax{\mu}(Ax)-\nabla \smaxn{n}{\mu}(Ax)}_\ast\leq 4\delta. $$
\end{itemize}
\end{lemma}

Our hard instance construction is as follows. Let $\gamma,\mu, \beta$ and $\alpha$ positive parameters. For $i\in [T]$ define the functions $f_i,h,g:\Rd\mapsto \R$ by
\begin{align*}
    f_i(x) &\defi \smaxn{i}{\mu}((\langle z_j, x\rangle + (T-j)\gamma)_{j\in[d]}) + \mu(T+1-i)d^{-\alpha}, \\
    h(x) &\defi \max_{i\in[T]}f_i(x), \qquad g(x)\defi\S{\q}{\beta}[h](x).
    \label{def_fgh}
\end{align*}
The functions \( f_i \) are translated partial softmax functions, which are \(1\)-Lipschitz by \cref{lemma:softmax}. The function \( h \) is the maximum of \(1\)-Lipschitz functions and is thus also \(1\)-Lipschitz. Finally, the function \( g \) is the sequentially randomized version of \( h \), making it smooth with Lipschitz derivatives, as established in \cref{lemma:randomized_smoothing}. The following lemma formalizes the high-order smoothness of \( g \).
\begin{lemma} \label{lemma:Lq}\linktoproof{lemma:Lq}
    For any choice of vectors $\{z_j\}_{j\in[T]}$ with $\|z_j\|_{\ast}\leq 1$, the function $g$ is convex, $\q$-times differentiable and $\nabla^{\q} g(x)$ is $L_{\q}$-Lipschitz with $L_{\q} = \bigop{q}{\left(\frac{T\ln d}{\Theta}\right)^{\q}}$.
\end{lemma}

\subsection{Overview of the proof}
Given the constructions above, the rest of the proof of \cref{thm:lower-bound} follows from a standard argument \citep{Nemirovski:1983,guzman2015lower}. Assuming that we work with a set \( \mathcal{X} \) containing the unit ball of \( (\mathbb{R}^d, \|\cdot\|) \),  we first establish an upper bound on the minimum value of \( g \) over \( \mathcal{X} \) by leveraging a uniform bound on the functions \( f_i \), consequence of condition \((ii)\) of the theorem. Then, for any sequence of points \( \{x_t\}_{t \in [T]} \), we construct an instance of the function $g$ such that \( g(x_t) \) remains uniformly lower bounded for all \( t \in [T] \). This construction uses vectors of the form \( z_t = \xi_t v_t \), where \( \{v_t\}_{t \in [T]} \) is a sequence of orthogonal vectors, and \( \{\xi_t\}_{t \in [T]} \) is a sequence of signs chosen adaptively based on the points \( \{x_t\} \). By setting the parameters $\gamma = \frac{\Theta}{4T}$, $\mu = \frac{\gamma}{4\alpha\ln d}$, $\beta = \frac{\gamma}{\ln d}$ and $\alpha\ge q+1$, we establish a gap between the upper and lower bounds of order \( \bigomegatildelp{\q}{\Theta} \).  

Next, we rescale \( g \) by a factor \( \L / L_{\q} \) to ensure it is a \( q \)-th order \( \L \)-Lipschitz function while preserving an optimality gap of order  $\bigomegatildelp{\q}{\L\frac{\Theta}{L_{\q}}} =  \bigomegatildelp{\q}{\L\frac{\Theta^{\q+1}}{T^{\q}}},$ where we apply \cref{lemma:Lq} to estimate \( L_{\q} \). Finally, we extend the result to general \( R \)-balls and \( (\L, \nu) \)-H\"older continuous functions via standard rescaling and interpolation techniques.

\acks{
D. Martínez-Rubio was partially funded by the project IDEA-CM (TEC-2024/COM-89).  C. Guzmán’s research was partially supported by INRIA Associate Teams project, ANID FONDECYT 1210362 grant, ANID Anillo ACT210005 grant, and National Center for Artificial Intelligence CENIA FB210017, Basal ANID. J.P. Contreras was supported by Postdoctoral Fondecyt Grant No. 3240505.
}

\printbibliography[heading=bibintoc] %

\clearpage
\appendix

\section{Convergence of the adaptive algorithm}\label{app:adaptive_analysis}

This section proves convergence of the generalized version of our \cref{alg:non_euclidean_accelerated_proximal_point_unif_convex} that is, \cref{alg:adaptive_non_euclidean_accelerated_proximal_point_unif_convex}. Recall that for any $\r \in [1, \infty]$, we define $\dualnumber{r}$ as in \cref{def:youngs_conjugate_number}. We repeat the pseudocode for convenience.

\begin{algorithm}[h!]
        \caption{Non-Euclidean Adaptive Accelerated Proximal Point with Uniformly Convex Regularizer}
\begin{algorithmic}[1] 
    \REQUIRE Convex function $f:\Rd\to\R$. Regularizer $\psi$ that is $(1,\r)$-uniformly convex function wrt a norm $\norm{\cdot}$. Initial $\widehat{\lambda}_0$. Adjustment constant factor $\alpha > 1$. Inexactness constants $\sigma, \sigma'$.
    \vspace{0.1cm}
    \hrule
    \vspace{0.1cm}
    \State $\zk[0] \gets \yk[0] \gets \xk[0]$; \quad $\Ak[0] \gets 0$; \quad $C \gets \frac{1}{2}\left(\frac{\dualnumber{r}(1-\sigma-\sigma')}{1+\sigma^{\dualnumber{r}}}\right)^{\r-1}$ 
    \State $\tilde{y}_1, v_1, \lambda_1 \gets \iproxoraclehat[r] (x_0, \widehat{\lambda}_0) $; \quad $\widehat{\lambda}_1 \gets \lambda_1$ 
    \FOR {$k = 1 \textbf{ to } T$}
        \State $\widehat{A}_k = \widehat{a}_k + \Ak[k-1]$; \ \  $\widehat{a}_k = (C\widehat{A}_k^{\r-1} \widehat{\lambda}_k)^{1/\r}$ \Comment{$\r$-degree equation on $\widehat{a}_k > 0$.}
        \State $\xk \gets \frac{A_{k-1}}{\widehat{A}_{k}} y_{k-1} + \frac{\widehat{a}_k}{\widehat{A}_{k}} z_{k-1}$
        \State \textbf{if } $k>1$ \textbf{then} $\yktilde[k], v_k, \lambda_k \gets \iproxoraclehat[r] (x_k, \widehat{\lambda}_k) $ \Comment{Oracle satisfying \cref{eq:inexact_general_prox_oracle_properties}}
        \State $\gamma_k \gets \min\{\lambda_k / \widehat{\lambda}_k, 1\}$; $a_k \gets \gamma_k \widehat{a}_k$; $A_k \gets a_k + A_{k-1}$
        \State $y_k \gets \argmin \{f(\tilde{y}_{k}), f(y_{k-1}) \}$ \Comment{Or $y_k \gets \frac{(1-\gamma_k)A_{k-1}}{A_k} y_{k-1} + \frac{\gamma_k \widehat{A}_k}{A_k} \yktilde[k]$}
        \State $\zk \gets \argmin_{z\in\Rd} \{\sum_{i=1}^{k}\ak[i]\innp{ v_i, z} + \breg(z, \xk[0]) \}$ %
        \State \textbf{if} $\widehat{\lambda}_t \leq \lambda_t$ \textbf{then} $\widehat{\lambda}_{t+1} \gets \alpha \widehat{\lambda}_t$ \textbf{else} $\widehat{\lambda}_{t+1} \gets \alpha^{-1} \widehat{\lambda}_t$
    \ENDFOR
    \State \textbf{return} $\yk[T]$.
\end{algorithmic}
\end{algorithm}

\begin{proof}\linkofproof{thm:adaptive_alg_guarantee}
    We use an adaptive scheme inspired by \citep{carmon2022optimal} used to guess the proximal parameter $\lambda_k$. In some cases of high-order smooth convex optimization, we can implement the inexact proximal oracle of \cref{alg:non_euclidean_accelerated_proximal_point_unif_convex}, but with a parameter of $\lambda_k$ that depends on $y_k$. Because $y_k$ also depends on $\lambda_k$, this double dependence leads to problems that can be solved by using a binary search at each iteration. However, the adaptive scheme removes the need for the binary search.

    We use the same lower bound as in \cref{eq:lower_bound_ms} but this time for simplicity we only use $\r$-uniformly convex regularizers, $\r \geq 2$, instead of inexact ones. As opposed to \cref{alg:non_euclidean_accelerated_proximal_point_unif_convex}, this time we denote by $\yktilde[k]$ the points that the inexact proximal oracle returns. Therefore, $v_k \in \partial^{\oldepsilon_k} f(\yktilde[k])$. We define a different convex combination for the point where we compute the gradient $x_k = \frac{A_{k-1}}{\widehat{A}_{k}} y_{k-1} + \frac{\widehat{a}_k}{\widehat{A}_{k}} z_{k-1}$ for some $\widehat{a}_{k}$ to be determined later, that satisfies $a_{k} = \gamma_k \widehat{a}_k$, where $\gamma_k \defi \min\{\lambda_{k}/\widehat{\lambda}_{k}, 1 \}$, and where $\widehat{\lambda}_{k}$ is a guess on the proximal parameter of our next oracle and $\lambda_k$ is the proximal parameter that the oracle actually returns. We also have $\widehat{A}_k \defi A_{k-1} + \widehat{a}_k$. 

    We define the upper bound $U_k \defi f(y_{k})$ and the primal-dual gap $G_k \defi U_k - L_k$ but this time we want $U_k \leq \frac{(1-\gamma_{k})A_{k-1}}{A_{k}} f(y_{k-1}) + \frac{\gamma_{k} \widehat{A}_{k}}{A_k} f(\yktilde[k])$.  Therefore, we can define the combination $y_k \defi \frac{(1-\gamma_{k})A_{k-1}}{A_{k}} y_{k-1} + \frac{\gamma_{k} \widehat{A}_{k}}{A_k} \yktilde[k]$, which note it is a convex combination, or we can simply define it as $y_k \in \argmin\{ f(y_{k-1}), f(\yktilde[k])\}$. With these definitions, we have
        \begin{align*}
         \begin{aligned}
             A_{k} &G_{k} - A_{k-1}G_{k-1} - \eventindicator{k=1} \breg(u, \xk[0])
             \circled{1}[\leq](\Ccancel[red]{1}-\gamma_{k})A_{k-1}f(\yk[k-1]) + \gamma_{k}\widehat{A}_{k} f(\yktilde[k]) -\Ccancel[red]{A_{k-1}f(\yk[k-1])} \\
             & \quad - \ak[k] f(\yktilde[k]) \Ccancel[blue]{- \sum_{i=1}^{k-1} \ak[i] f(\yktilde[i])} - \left(\sum_{i=1}^{k-1} \ak[i](\innp{ v_i, \zk[k] - \yktilde[i]}-\oldepsilon_i)  + \breg(\zk[k], \xk[0]) \right) - \ak[k]\innp{ v_{k}, \zk[k] - \yktilde[k]} + \ak[k]\oldepsilon_k \\
             & \quad + \Ccancel[blue]{\sum_{i=1}^{k-1} \ak[i] f(\yktilde[i])} + \left(\sum_{i=1}^{k-1} \ak[i](\innp{v_i, \zk[k-1] - \yktilde[i]}-\oldepsilon_i) + \breg(\zk[k-1], \xk[0]) \right)  \\
             &\circled{2}[\leq] \innp{v_{k}, \gamma_k A_{k-1}(\yktilde[k]-\yk[k-1]) - \ak[k] (\pm \zk[k-1] + \zk[k]-\yktilde[k])} -\frac{1}{\r}\norm{\zk[k-1] - \zk[k]}^{\r} + \gamma_k \widehat{A}_{k}\oldepsilon_k  \\
             & \circled{3}[=] \innp{v_{k}, \gamma_k \widehat{A}_{k} (\yktilde[k] - x_{k}) + \ak[k](\zk[k-1] - \zk[k]) }  - \frac{1}{\r} \norm{\zk[k-1] - \zk[k]}^{\r} + \gamma_k \widehat{A}_{k}\oldepsilon_k \\
             & \circled{4}[\leq] \gamma_k \widehat{A}_{k}\innp{v_{k}, \yktilde[k] - \xk[k]} + \frac{a_{k}^{\dualnumber{r}}}{2}\norm{v_{k}}_\ast^{\dualnumber{r}} + \gamma_k \widehat{A}_{k}\oldepsilon_k \\
             & \circled{5}[\leq] \gamma_k \widehat{A}_{k}\innp{\widehat{v}_{k}, \yktilde[k] - \xk[k]} + \gamma_k \widehat{A}_{k}\norm{v_{k}-\widehat{v}_{k}}_\ast \cdot \norm{\yktilde[k] - \xk[k]} + \frac{2^{1/(\r-1)}}{\dualnumber{r}}a_{k}^{\dualnumber{r}}(\norm{\widehat{v}_{k}}_\ast^{\dualnumber{r}} + \norm{v_{k}-\widehat{v}_{k}}_\ast^{\dualnumber{r}}) + \gamma_k \widehat{A}_{k}\oldepsilon_k \\
             & \circled{6}[\leq] \left( \frac{\gamma_k \widehat{A}_k(-1+\sigma)}{\lambda_k} + \frac{a_{k}^{\r/(\r-1)}}{\lambda_{k}^{\r/(\r-1)}}\frac{2^{1/(\r-1)}}{\dualnumber{r}}(1+\sigma^{\dualnumber{r}}) \right) \norm{\yktilde[k]-\xk[k]}^{\r} + \gamma_k \widehat{A}_{k}\oldepsilon_k\\
             & \circled{7}[\leq] \left( -\widehat{A}_k(1-\sigma-\sigma') + \frac{\widehat{a}_{k}^{\r/(\r-1)}}{\widehat{\lambda}_{k}^{1/(\r-1)}}\frac{2^{1/(\r-1)}}{\dualnumber{r}}(1+\sigma^{\dualnumber{r}})\right) \frac{\gamma_k}{\lambda_k}\norm{\yktilde[k]-\xk[k]}^{\r}\\
             & \circled{8}[\leq] -\frac{\widehat{A}_k(1-\sigma-\sigma')}{2}\min\left\{\widehat{\lambda}_k^{-1}, \lambda_k^{-1}\right\} \norm{\yktilde[k]-\xk[k]}^{\r} \defi E_k.
         \end{aligned}
        \end{align*}

            Above, we wrote the definition of the gaps in $\circled{1}$, we canceled some terms and we used the indicator on the left hand side to handle the cases $k=1$ and $k> 1$ at the same time. We also used the bound $A_{k}U_{k} \leq (1-\gamma_k)A_{k-1}f(y_{k-1}) + \gamma_k \hat{A}_k\yktilde[k]$.
            In $\circled{2}$, we applied the enlarged subgradient property on the remaining terms with $f(\cdot)$, namely $\gamma_k A_{k-1}(f(y_k)-f(\yktilde))$ and used $a_k = \gamma_k \widehat{a}_k$,  $\widehat{A}_k = A_{k-1}+\widehat{a}_k$, yielding error $\gamma_kA_{k-1}\oldepsilon_k$ which gives $\gamma_k\widehat{A}_k\oldepsilon_k$ after merging it with the other error. We grouped the resulting expression with another term, and we used that the terms in parentheses are $\elll{k-1}(z_{k-1}) - \elll{k-1}(z_{k})$. The $(1, \r)$-uniform convexity of $\elll{k-1}(\cdot)$ and the fact that $z_{k-1}$ is its minimizer implies the bound. In $\circled{3}$, we used that by definition of $\xk[k]$ it is $\widehat{A}_{k} x_{k} = A_{k-1} \yk[k-1] + \widehat{a}_{k}\zk[k-1]$, along with $a_k = \gamma_k \widehat{a}_k$. We had added and subtracted $\zk[k-1]$ to apply H\"older's and Young's inequalities in $\circled{4}$, namely $\innp{v, u} \leq \norm{v}_\ast \norm{u}  \leq \frac{1}{\dualnumber{r}}\norm{v}_\ast^{\dualnumber{r}} + \frac{1}{\r}\norm{u}^{\r}$. In $\circled{5}$, we added and subtracted some $\widehat{v}_{k}$ terms and use simple bounds to make $\norm{v_{k}-\widehat{v}_{k}}_\ast$ appear. We do this because the first and third resulting terms are proportional to $\norm{y_k-x_k}^{\r}$ and with our criterion we can make the rest to be as well. So indeed, in $\circled{6}$ we applied the properties of the oracle \cref{eq:inexact_prox_oracle_properties} for the second and fourth terms and used \cref{eq:property_subgrads_of_norm_to_the_r} which also holds in this algorithm, and this application yields equality for the first and third terms. We obtain $\circled{7}$ by substituting $a_k = \gamma_k\widehat{a}_k$, and using that by definition of $\gamma_k \defi \min\{ \lambda_k / \widehat{\lambda}_k, 1\}$, it is $\gamma_k /\lambda_k  = \min\left\{\widehat{\lambda}_k^{-1}, \lambda_k^{-1}\right\} \leq \widehat{\lambda}_k^{-1}$. We also used the assumption $\oldepsilon_k \leq \frac{\sigma'}{\lambda_k} \norm{\yktilde[k]-x_k}^{\r}$.

         Finally, for $\circled{8}$, we find $\widehat{a}_k > 0$ so the second summand is half of the absolute value of the first summand. This only changes the value of $\widehat{a}_k$ slightly with respect to making the bound $0$, and at the same time, it provides a good negative term that can be used to guarantee fast growth of $A_k$ when $\norm{y_k - x_k}$ is large enough. Let $C \defi \frac{1}{2}\left(\frac{\dualnumber{r}(1-\sigma-\sigma')}{2(1+\sigma^{\dualnumber{r}})}\right)^{\r-1}$. It is enough to solve the equation $\widehat{a}_{k}^{\r} = C \widehat{A}_{k}^{\r-1} \widehat{\lambda}_{k}$. And this does not require to know the value of $\lambda_k$, which is only revealed after we choose $x_k$ and receive the answer from the oracle $\iproxoraclehat[r]$. The first part of the second statement now holds by \cref{eq:convergence_rate_inexact} and the definition of our $E_k$.

    From now on, we assume a minimizer $\xast$ exists and set $u = \xast$. Borrowing from \citep{carmon2022optimal} (note that our convention for the proximal parameter $\lambda$ being in the denominator of the Moreau's envelope definition reverses the order), we define $S_T^{\geq} \defi \setsuch{k\in[T]}{\lambda_k \geq \widehat{\lambda}_k} = \setsuch{k\in[T]}{\gamma_k=1}$ the set of \emph{up} iterates, and recall that after any iterate $k$ of them we have that $\widehat{\lambda}_k$ is increased to $\widehat{\lambda}_{k+1} \defi \alpha \widehat{\lambda}_{k}$. Similarly, the set of down iterates is defined as $S_T^{<} \defi \setsuch{k\in[T]}{\lambda_k < \widehat{\lambda}_k}$ and after any of these iterates $k$, we have $\widehat{\lambda}_{k+1} = \alpha^{-1}\widehat{\lambda}_k$. The first iterate is an up iterate by construction, see Line \ref{line:initializing_widehat_lambda} of \cref{alg:adaptive_non_euclidean_accelerated_proximal_point_unif_convex}.

The sequence of iterates up to $T$ can be split into subsequences of maximal length with only up or only down iterates. We denote the last iterate of the $i$-th subsequence of down iterates as $d_{i+1}$. And for convenience, even if the first and last iterates are not down iterates we denote them by $d_1 = 1$ and $d_S = T$, where $S-1$ is the number of up subsequences. We denote the last iterate of the $i$-th of these $S-1$ up subsequences as $u_{i}$. As an example:
\[
    \underbrace{U}_{d_1}\ U\ \underbrace{U}_{u_1} D\ D\underbrace{D}_{d_2}U\ U\underbrace{U}_{u_2}D\ D\ D\underbrace{D}_{d_3}\underbrace{U}_{u_3}\underbrace{D}_{d_4}U\ U\underbrace{U}_{u_4}D\ \underbrace{D}_{d_5}
\] 
Because of how we update $\widehat{\lambda}_k$, and the indices definitions, we have for $i\in[S-1]$ that $\widehat{\lambda}_{u_i} \geq \alpha^{d_{i+1}-u_i-2}\widehat{\lambda}_{d_{i+1}}$, where the inequality is an equality for $i = S$ in case that the last iterate is an up iterate in which case $u_{S-1} =d_{S}$ and  $\widehat{\lambda}_{u_{S-1}}  = \widehat{\lambda}_{d_{S}} \geq \alpha^{d_S - u_{S-1}-2}\widehat{\lambda}_{d_{S}}$. We also have for all $i\in[S]$ that $\widehat{\lambda}_{u_i} \geq \alpha^{u_i - d_i-2}\widehat{\lambda}_{d_i}$, where the inequality is also an equality except for $i =1$ in case that the first subsequence of up iterates is of length one in which case $d_1 = u_1$ and so $\widehat{\lambda}_{u_1} \geq \alpha^{u_1-d_1 -2}\widehat{\lambda}_{d_1}$.

Now, given the relation $\widehat{a}_{k}^{\r} = \frac{1}{2}C \widehat{A}_{k}^{\r-1} \widehat{\lambda}_{k}$, and $a_k = \widehat{a}_k$ and $A_k = \widehat{A}_k$ for all , and $A_k \geq A_{k-1}$, we have $\circled{1}$ below by \eqref{eq:recursion_lower_bounding_Ak_unif_cvx}:
\begin{align}\label{eq:lower_bounding_Ak_in_adaptive}
     \begin{aligned}
         A_T^{1/\r} & \circled{1}[\geq] A_{T-1}^{1/\r} + \eventindicator{T \in S_T^{\geq}} \frac{C^{1/\r}}{\r} \widehat{\lambda}_T^{1/\r} \circled{2}[\geq]  \sum_{i \in S_T^{\geq}} \frac{C^{1/\r}}{\r} \widehat{\lambda}_i^{1/\r}. \\
         &\geq \frac{C^{1/\r}}{2r}\left( \sum_{i\in [S-1]} \widehat{\lambda}_{u_i}^{1/\r} + \sum_{i\in [S-1]} \widehat{\lambda}_{u_i}^{1/\r}  \right)\\
         &\circled{3}[\geq] \frac{C^{1/\r}}{2r} \left( \sum_{i = 2}^{S} (\alpha^{d_{i} - u_{i-1} - 2}\widehat{\lambda}_{d_{i}})^{1/\r}  + \sum_{i = 1}^{S-1} (\alpha^{u_i-d_i-2}\widehat{\lambda}_{d_{i}})^{1/\r} \right)\\
         & \circled{4}[\geq] \frac{C^{1/\r}}{2r} \left( (\alpha^{\frac{1}{2}(u_{1} - d_1) - 2}\widehat{\lambda}_{d_{1}})^{1/\r} + \sum_{i = 2}^{S-1} (\alpha^{\frac{1}{2}(u_{i} - u_{i-1}) - 2}\widehat{\lambda}_{d_{i}})^{1/\r}  + (\alpha^{\frac{1}{2}(d_S - u_{S-1})-2}\widehat{\lambda}_{d_{S}})^{1/\r} \right)\\
         &\circled{5}[\geq] \frac{C^{1/\r}}{2r} \sum_{i\in Q} (\alpha^{r_i - 2}\widehat{\lambda}_{d_{i}})^{1/\r},
     \end{aligned}
\end{align}
where $\circled{2}$ applied the same as $\circled{1}$ recursively. In $\circled{3}$ we applied the bounds on $\widehat{\lambda}_{u_i}$ that we computed above. In $\circled{4}$, for the indices $i=2, 3, \dots, S-1$, we used the $\r$- and geometric mean inequality: $\frac{1}{2}(\alpha^{a/\r} + \alpha^{b/\r}) \geq \alpha^{(a+b)/(2r)} \geq \frac{1}{2}\alpha^{(a+b)/(2r)}$ for any $\r > 0$, where losing a factor of $2$ is done just for convenience. In the first and third summands, we just reduce the value of the exponent in order to have a unified structure in $\circled{5}$, where we just used the numbers $r_i \geq 0$ defined as $r_1 \defi \frac{1}{2}(n_1 - d_1) = \frac{1}{2}(n_1 - 1)$, $r_S = \frac{1}{2}(d_S - n_{S-1}) \defi \frac{1}{2}(T - n_{S-1})$, and for $i = 2, 3, \dots, S-1$, it is $r_i \defi \frac{1}{2}(n_i - n_{i-1})$. And note $\sum_{i=1}^S r_i = \frac{T-1}{2}$.

Finally, we note that $T$ was arbitrary, and also that the numbers defined by the subsequence are compatible with a longer subsequence, except for the last one. The theorem statement holds, after some indices relabeling and using a set $\Lambda$.
\end{proof}

\section{Proofs from \nameref{sec:preliminaries_and_groundwork}}\label{app:proofs_of_preliminaries}
 
\begin{proof}\linkofproof{prop:properties_of_M}

    Proof of \cref{properties_of_M:1}. For the norm $\norm{\cdot}_{\p}$ and $\p=1$, we can consider the function $f(x) = \frac{1}{2}\norm{x}_1^2$, then for instance for $x = (1, 1, \dots, 1)$ and $\lambda = \frac{1}{2}$, there is not a unique minimizer. Similarly, if $\p = \infty$ and $f(x) = \frac{1}{2}\norm{x}_\infty^2$ then for instance for $x = e_1$ and $\lambda = \frac{1}{2}$ there is not a unique minimizer. For $\p \in (1, \infty)$ the minimizer $\prox(x)$ is unique since $\frac{1}{2\lambda}\norm{x-\cdot}^2$ is strictly convex.

    Proof of \cref{properties_of_M:2}. The function to be optimized in the definition of $\M(x)$ is jointly convex on $x, y$. Consequently, the epigraph of $(x, y)\mapsto f(y) + \frac{1}{2\lambda}\norm{x-y}^2 $ is convex and so the epigraph of $\M(x)$ is the projection of a convex set and therefore convex. The joint convexity is derived from the joint convexity of $(x, y) \mapsto \norm{x-y}^2$ which holds since for points $x, x', y, y' \in \Rd$, we have
    \[
        \norm{(x+x')/2 - (y+y')/2}^2 \circled{1}[\leq] \left(\frac{1}{2}\norm{x-y} + \frac{1}{2}\norm{x'-y'}\right)^2 \leq \frac{1}{2}\norm{x-y}^2 + \frac{1}{2}\norm{x'-y'}^2,
    \] 
    where we used the triangular inequality in $\circled{1}$ and $(a+b)^2 \leq 2a^2 + 2b^2$ in $\circled{2}$.

   Proof of \cref{properties_of_M:3}. By definition of $M$, we have
   \[
       f(\prox(x)) \leq f(\prox(x)) + \frac{1}{2\lambda} \norm{x-\prox(x)}^2 = \M(x) \leq f(x) + \frac{1}{2\lambda} \norm{x-x}^2 = f(x).
   \] 
    In particular, since $f(\xast) = \min_{x\in \Rd} f(x)$, it must be $f(\prox(\xast)) = \M(\xast) = f(\xast)$.

    Proof of \cref{properties_of_M:4}. By the generalized Danskin's theorem \citep{bertsekas2003convex}, we have $\partial M (x) = \operatorname{conv}\{\subdiffsqnorm{x}(\prox(x)) \ | \ \prox(x) \in \Prox(x) \}$. Moreover, by the first order optimality condition of any $\prox(x) \in \Prox(x)$ in the optimization problem defining $\M(x)$, we have $0 \in \partial f(\prox(x)) + \partial_y \left.\frac{\norm{x-y}^2}{2\lambda}\right|_{y=\prox(x)}$ and so there is $g \in \subdiffsqnorm{x}(\prox(x))$ such that $g \in \partial f(\prox(x))$. Note that our proof relies on the symmetry of the function that we use to convolve with $f$, or more in particular, on $\subdiffsqnorm{x}(y) = - \subdiffsqnorm{y}(x)$ for all $x, y$. (compare to Bregman proximal point, in which one uses the Moreau envelope $\M(x) \defi \min_{y\in\Rd}\{ f(y) + \breg[\psi](x, y)\}$ where $\breg[\psi]$ is not symmetric in general).

    Proof of \cref{properties_of_M:5}.
    Let $f(x) = \frac{1}{2}\norm{x}^2$ and let $g \in \partial f(x)$, for some $x \in \Rd$. We have 
    \begin{align*}
     \begin{aligned}
         \frac{1}{2}\norm{x}^2 &= f(x) \circled{1}[=] \innp{g, x} - f^\ast(g) \circled{2}[\leq]  \norm{g}_\ast\cdot\norm{x} - f^\ast(g)  \circled{3}[\leq] \frac{1}{2}\norm{g}_\ast^2 + \frac{1}{2}\norm{x}^2  - f^\ast(g) \circled{4}[=] \frac{1}{2}\norm{x}^2.
     \end{aligned}
    \end{align*}
    where $\circled{1}$ uses Fenchel duality, %
    $\circled{2}$ uses Cauchy-Schwarz, $\circled{3}$ is due to Young's inequality and $\circled{4}$ uses the duality between norms. Because we arrived to an equality, then $\circled{2}$ and $\circled{3}$ must be equalities, which only holds if $\innp{g, x} = \norm{x}^2 = \norm{g}_\ast^2$. By shifting, scaling, and \cref{properties_of_M:4}, defining any $g \in \subdiffsqnorm{x}(\prox(x))$ and $g_M \in \partial \M(x)$, we have $\lambda\innp{g, \prox(x) - x} = \norm{x-\prox(x)}^2 = \norm{\lambda g}_\ast^2 \stackrel{?}{=} \lambda^2\norm{g_M}_\ast^2 \stackrel{?}{=} \lambda\innp{g_M, \prox(x) - x}$, as desired.

    Proof of \cref{properties_of_M:6}. We have
    \[
        \M[\lambda_1](x) - \frac{1}{2\lambda_1} \norm{x-\prox[\lambda_1](x)}^2 \circled{1}[=] f(\prox[\lambda_1](x)) \circled{2}[\geq] \M[\lambda_2](\prox[\lambda_1](x)),
    \] 
    where $\circled{1}$ holds by definition of $\M(x)$ and $\prox(x)$, and $\circled{2}$ uses \cref{properties_of_M:3}.

\end{proof}

\begin{proof}\linkofproof{lemma:inexact_unif_convex_from_unif_convex}
    By using Young's inequality with conjugate exponents $\sigma / s > 1 $ and $\sigma / (\sigma -s) > 1$:
    \[
        \norm{x-y}^s \leq \frac{1}{a^{\frac{\sigma}{s}} \sigma / s}\norm{x-y}^\sigma + a ^{\frac{\sigma}{\sigma-s}}  \frac{\sigma-s}{\sigma},
    \]
    or equivalently we have $\circled{1}$ below
    \[
        \frac{a^{\frac{\sigma}{s}}}{s} \norm{x-y}^s - a^{(\frac{\sigma}{\sigma-s} + \frac{\sigma}{s})}\frac{\sigma-s}{s\sigma} \circled{1}[\leq] \frac{1}{\sigma}\norm{x-y}^\sigma \circled{2}[\leq] \breg[\psi](x, y),
    \] 
    where $\circled{2}$ holds by $(1,\sigma)$-uniform convexity of $\psi$. Simplifying the left hand side yields the statement.
    
\end{proof}

\begin{proof}\linkofproof{lemma:regularity_of_regularizers}
In the first case $\psi(x) = \frac{1}{\p}\norm{x-x_0}_{\p}^{\p}$ and $\p \geq 2$, we note that a proof of $(2^{-\frac{\p(\p-2)}{\p-1}}, \m)$-uniform convexity is provided in \citep[Proposition 3.2]{zalinescu1983}. We show a proof of uniform convexity with a slightly better constant.  Note that $\norm{x}_{\p}^{\p}$ is a separable function. Thus, it is enough to show the uniform convexity of the one-dimensional case and add up all of the corresponding inequalities in order to obtain the result. In \citep[Lemma 4.2.3]{nesterov2018lectures}, it is established that $\frac{1}{\p}\norm{x}_2^{\p}$ is $(2^{2-\p}, \p)$-uniformly convex with respect to the Euclidean norm $\norm{\cdot}_2$. Since in one dimension, all of the $\p$-norms are the same, the result is proven.

    The second statement was shown in \citep{ball2002sharp,shalev2007online}. We reproduce the argument of the latter for completeness. We now have $\psi(x) \defi \frac{1}{2(\p-1)}\norm{x-x_0}_{\p}^2$ and $\p \in (1, 2]$, and we write $\psi(x) \defi \Psi(\sum_{i=1}^d \phi(x_i))$ for $\Psi(a) \defi \frac{a^{2/\p}}{2(\p-1)}$ and $\phi(a)=\abs{a}^{\p}$ with derivatives:
    \[
        \Psi'(a)=\frac{1}{\p(\p-1)}a^{\frac{2}{\p}-1}; \quad  \Psi''(a) =\frac{1}{\p(\p-1)}\pa{\frac{2}{\p}-1}a^{\frac{2}{\p}-2} \geq 0,
    \]

    \[
    \phi'(a) = \p \text{ sign} (a) \abs{a}^{\p-1}; \quad \phi''(a) = \p(\p-1)\abs{a}^{\p-2}.
    \] 
    We used  $\abs{a}^{\q}$ is differentiable everywhere for $\q>1$. Thus, 
    \[ 
    \nabla^2_{i,j}f(x) = \Psi''\pa{\sum_{k=1}^d\phi(x_k)}\phi'(x_i)\phi'(x_j) + \eventindicator{i=j} \Psi'\pa{\sum_{k=1}^d\phi(x_k)}\phi''(x_i),
    \]

    Let us denote $y_i=\abs{x_i}^{(2-\p)\frac{\p}{2}}$. We have
    \begin{align*}
    \begin{aligned}
        \nabla^2& f(x)[v, v]=\Psi''\pa{\sum_{\r=1}^n\phi(x_r)}\pa{\sum_i\phi'(x_i)v_i}^2+\Psi'\pa{\sum_{i=1}^d\phi(x_i)}\sum_i\phi''(x_i)v_i^2 \\
        &\circled{1}[\geq] \frac{\norm{x}_{\p}^{\p\pa{\frac{2}{\p}-1}}}{\p(\p-1)}\sum_i \p(\p-1)\abs{x_i}^{\p-2}v_i^2 = \pa{\sum_{i=1}^d \abs{x_i}^{\p}}^{\frac{2-\p}{\p}}\sum_i \abs{x_i}^{\p-2}v_i^2 \\
        &= \pa{\pa{\sum_i y_i^\frac{2}{2-\p}}^\frac{2-\p}{2}\pa{\sum_i\frac{v_i^2}{y_i^{2/\p}}}^\frac{\p}{2}}^\frac{2}{\p} \\
        &\circled{2}[\geq] \pa{\sum_i y_i \frac{v_i^{\p}}{y_i}}^\frac{2}{\p} = \pa{\sum_i v_i^{\p}}^\frac{2}{\p}  = \norm{v}_{\p}^2.
    \end{aligned}
    \end{align*}
    In $\circled{1}$ we dropped the first summand which is $\geq 0$, and wrote the expression for the second one. In $\circled{2}$ we used H\"older's inequality $\innp{w, z} \leq \norm{w}_{\q} \norm{z}_{\q^\ast}$ with the norm $\q=\frac{2}{2-\p}$ and its dual $\q^\ast =\frac{2}{\p}$.
\end{proof}

\section{Other proofs from \nameref{sec:acc_inexact_PP_unif_cvx}: Algorithms}\label{app:proofs_of_algorithms}

\begin{proof}\linkofproof{thm:convergence_of_accelerated_not_adaptive_algorithm}
    Our algorithm makes use of a $\deltainexact$-inexact $(\muUnif,\r)$-uniformly convex regularizer with respect to a norm $\norm{\cdot}$, i.e. $\breg(x, y) \geq \frac{\muUnif}{\r}\norm{x-y}^{\r} - \deltainexact$. Note that for convex $h$ we have that $\ell(x) \defi \psi(x) + h(x)$ is also $\deltainexact$-inexact $(\muUnif, \r)$-uniformly convex and if $z$ is a global minimizer of $\ell$, then by the first-order optimality condition, we have  $\ell(x) - \ell(z) \geq \breg[\ell](x, z) \geq \frac{\muUnif}{\r}\norm{x-z}^{\r} - \deltainexact$.

    We use a primal-dual technique in the spirit of Nesterov's estimate sequences \citep{nesterov2004introductory} and the approximate duality gap technique of \citet{diakonikolas2019approximate} in order to naturally define a Lyapunov function that allows to prove convergence. Given $a_i > 0$, for $i \geq 1$ and $A_k\defi \sum_{i=1}^k \ak[i]$ to be chosen later, we define the following lower bound $L_k$ on $f(u)$, for all $k \geq 1$:
        \begin{align}\label{eq:lower_bound_ms}
         \begin{aligned}
             A_k f(u) &\circled{1}[\geq] \sum_{i=1}^k \ak[i] f(\yk[i]) + \sum_{i=1}^k \ak[i]\innp{v_i, u - \yk[i]} - a_i\oldepsilon_i \\
             &\circled{2}[\geq] \sum_{i=1}^k \ak[i] f(\yk[i]) + \min_{z\in\Rd}\left\{\sum_{i=1}^k ( \ak[i]\innp{ v_i, z - \yk[i]}-\ak[i]\oldepsilon_i) + \breg(z, \xk[0])\right\} - \breg(u, \xk[0]) \\
             &\circled{3}[=] \sum_{i=1}^k \ak[i] f(\yk[i]) + \sum_{i=1}^k ( \ak[i]\innp{v_i, \zk[k] - \yk[i]} -\ak[i]\oldepsilon_i) + \breg(\zk[k], \xk[0]) - \breg(u, \xk[0]) \\
             &\defi A_k L_k,
         \end{aligned}
        \end{align}
        where $\circled{1}$ holds because $v_i \in \subdiffeps[\oldepsilon_i] f(\yk[i])$. In $\circled{2}$, we added and subtracted the regularizer $\breg[\psi](u, x_0)$ and took a minimum to remove the dependence of $u$ in the lower bound (except for the term $-\breg(u, x_0)$ that is irrelevant for defining the algorithm, as it will become evident in a moment). Equality $\circled{3}$ simply uses that $\zk[k]$ was defined as the $\argmin$ of that minimization problem. Since $A_0 = 0$, we define $A_0L_0 \defi 0$. We define the $\deltainexact$-inexact $(\muUnif, \r)$-uniformly convex function 
        \[
        \newtarget{def:thing_minimized_in_the_lower_bound}{\elll{k}}(z) \defi \sum_{i=1}^k ( \ak[i]\innp{ v_i, z - \xk[i]} - \ak[i]\oldepsilon_i)  + \breg(z, \xk[0]),
        \] 
        which is part of the bound above, and recall that its minimizer is $\zk$. Now, if we define an upper bound $U_k \geq f(\yk)$ and we show that for some numbers $E_k$, the duality gap $G_k \defi U_k -L_k$ satisfies 
        \begin{equation}\label{eq:lyapunov_property_inexact_prox}
            A_{k} G_{k} - A_{k-1}G_{k-1} \leq E_k \text{ for all }k > 1\text{, and }A_1G_1 - A_0 G_0 = A_1 G_1 \leq \breg(u, x_0) + E_1,
        \end{equation}
        then telescoping the inequalities above, we obtain the following convergence rate after $T$ steps:
        \begin{equation}\label{eq:convergence_rate_inexact}
            f(\yk[T]) - f(u) \leq U_T - L_T = G_T \leq \frac{A_1G_1 + \sum_{i=2}^T E_i}{A_T} \leq \frac{\breg(u,x_0)+ \sum_{i=1}^T E_i}{A_T},
        \end{equation}

    We choose the upper bound $U_k =f(y_k)$, so $G_k = f(y_k) - L_k$. Thus, we have, for all $k \geq 1$:
        \begin{align*}
         \begin{aligned}
             A_{k} &G_{k} - A_{k-1}G_{k-1} - \eventindicator{k=1} \breg(u, \xk[0])
             \circled{1}[=]A_{k-1}(f(\yk[k])-f(\yk[k-1])) + \cancel{\ak[k] f(\yk[k])} \\
             & \quad \cancel{- \sum_{i=1}^{k} \ak[i] f(\yk[i])} - \left(\sum_{i=1}^{k-1} (\ak[i]\innp{ v_i, \zk[k] - \yk[i]} - \ak[i]\oldepsilon_i ) + \breg(\zk[k], \xk[0]) \right) - \ak[k]\innp{ v_{k}, \zk[k] - \yk[k]} +\ak[k]\oldepsilon_k\\
             & \quad \cancel{+ \sum_{i=1}^{k-1} \ak[i] f(\yk[i])} + \left(\sum_{i=1}^{k-1} (\ak[i]\innp{v_i, \zk[k-1] - \yk[i]} - \ak[i]\oldepsilon_i ) + \breg(\zk[k-1], \xk[0]) \right)  \\
             &\circled{2}[\leq] \innp{v_{k}, A_{k-1}(\yk[k]-\yk[k-1]) - \ak[k] (\pm \zk[k-1] + \zk[k]-\yk[k])} -\frac{\muUnif}{\r}\norm{\zk[k-1] - \zk[k]}^{\r}  + \deltainexact + \Ak\oldepsilon_k  \\
             & \circled{3}[=] \innp{v_{k}, A_{k} (y_{k} - x_{k}) + a_{k}(\zk[k-1] - \zk[k]) }  - \frac{\muUnif}{\r}\norm{\zk[k-1] - \zk[k]}^{\r}  + \deltainexact + \Ak\oldepsilon_k\\
             & \circled{4}[\leq] A_{k}\innp{v_{k}, \yk[k] - \xk[k]} + \frac{a_{k}^{\dualnumber{r}}}{\muUnif^{1/(\r-1)}\dualnumber{r}}\norm{v_{k}}_\ast^{\dualnumber{r}}  + \deltainexact + \Ak\oldepsilon_k \\
             & \circled{5}[\leq] A_{k}\innp{\hat{v}_{k}, \yk[k] - \xk[k]} + A_{k}\norm{v_{k}-\hat{v}_{k}}_\ast \cdot \norm{\yk[k] - \xk[k]} + \frac{2^\frac{1}{\r-1}a_{k}^{\dualnumber{r}}}{\dualnumber{r}\muUnif^\frac{1}{\r-1}}\left(\norm{\hat{v}_{k}}_\ast^{\dualnumber{r}} + \norm{v_{k}-\hat{v}_{k}}_\ast^{\dualnumber{r}} \right)  + \deltainexact + \Ak[k]\oldepsilon_k  \\
             & \circled{6}[\leq] \left(  -\frac{A_{k}}{\lambda_{k}} + \frac{\sigma A_{k}}{\lambda_{k}} + \frac{a_{k}^{\r/(\r-1)}}{\lambda_{k}^{\r/(\r-1)}}\left(\frac{2}{\muUnif}\right)^{\frac{1}{\r-1}}\frac{1+\sigma^{\dualnumber{r}}}{\dualnumber{r}} + \frac{\sigma' A_k}{\lambda_k}\right) \norm{\yk[k]-\xk[k]}^{\r} + \deltainexact \\
             & \circled{7}[\leq] \deltainexact \defi E_k.
         \end{aligned}
        \end{align*}
        Above, we wrote the definition of the gaps in $\circled{1}$, we canceled some terms and we used the indicator on the left hand side to handle the cases $k=1$ and $k> 1$ at the same time. 
        In $\circled{2}$, we applied the enlarged subgradient property on the first term, which gives an error of $A_{k-1}\oldepsilon_k$ that we group with the other $a_k \oldepsilon_k$ error, and we grouped the resulting expression with another term, and we used that the terms in parentheses are $\elll{k-1}(z_{k-1}) - \elll{k-1}(z_{k})$. The inexact uniform convexity of $\elll{k-1}(\cdot)$ and the fact that $z_{k-1}$ is its minimizer implies the bound. In $\circled{3}$, we used that by definition of $\xk[k]$ it is $A_{k} x_{k} = A_{k-1} \yk[k-1] + a_{k}\zk[k-1]$. We had added and subtracted $\zk[k-1]$ to apply H\"older's and Young's inequalities in $\circled{4}$, namely $\innp{v, u} \leq \norm{v}_\ast \norm{u}  \leq \frac{c}{\dualnumber{r}}\norm{v}_\ast^{\dualnumber{r}} + \frac{1}{cp}\norm{u}^{\r}$, with $c = a_{k} $, and where $\dualnumber{r} \defi (1-1/\r)^{-1}$. In $\circled{5}$, we added and subtracted some $\hat{v}_{k}$ terms and use bounds to make $\norm{v_{k}-\hat{v}_{k}}_\ast$ appear, and other terms that we can bound  with something proportional to $\norm{y_k-x_k}^{\r}$. For the second summand, after applying the triangular inequality we used the means inequality $\frac{a+b}{2} \leq (\frac{a^{\dualnumber{r}}+b^{\dualnumber{r}}}{2})^{1/\dualnumber{r}}$, for $\dualnumber{r} > 1$. In $\circled{6}$ we applied the inequalities of our oracle $\iproxoracle[r]$ criterion for the second and fourth terms and used \cref{eq:property_subgrads_of_norm_to_the_r} that yields equality for the first terms and $\norm{\hat{v}_k}_\ast^{\dualnumber{r}}$. 

Let $C \defi \frac{\muUnif}{2}\left(\frac{\dualnumber{r}(1-\sigma-\sigma')}{1+\sigma^{\dualnumber{r}}}\right)^{\r-1}$. It is enough to satisfy $a_{k}^{\r} \leq C A_{k}^{\r-1}\lambda_k$ to make $\circled{7}$ hold, and then we define $E_k$ as $\deltainexact$. We choose $\ak[k] > 0$ as large as possible, that is, $a_{k}^{\r} = C A_{k}^{\r-1}\lambda_k$. For notational simplicity, let $D_k \defi C \lambda_k$. Then, since $A_k = a_k +A_{k-1}$, we can express the equation as $\hat{a}_k^{\r/(\r-1)} = \hat{a}_k + \hat{A}_{k-1}$, where $\hat{a}_k \defi a_k D_k^{-1}$ and $\hat{A}_{k-1} \defi A_{k-1} D_k^{-1}$. Now, using this expression and Young's inequality, we obtain
     \[
         \hat{A}_{k-1}^{1/\r} = \hat{a}_k^{1/\r} (\hat{a}_k^{1/(\r-1)} - 1)^{1/\r}  \leq \frac{\hat{a}_k^{1/(\r-1)}}{\dualnumber{r}} + \frac{\hat{a}_k^{1/(\r-1)}-1}{\r} = \hat{a}_k^{1/(\r-1)} - \frac{1}{\r},
     \] 
     which implies $\circled{1}$ below
     \[
         \hat{a}_k + \hat{A}_{k-1} \circled{1}[\geq] \left(\hat{A}_{k-1}^{1/\r}+\frac{1}{\r}\right)^{\r-1} + \hat{A}_{k-1} \circled{2}[\geq] \left(\hat{A}_{k-1}^{1/\r} + \frac{1}{\r}\right)^{\r}. 
     \] 
     Above, $\circled{2}$ holds by Bernoulli's inequality $(1-1/x)^{\r} \geq 1- \r/ x$ for $x,\r >1$, since dividing by the right hand side and simplifying gives $\frac{\r}{\hat{A}_{k-1}^{1/\r} \r+1} + \left(1-\frac{1}{\hat{A}_{k-1}^{1/\r}\r +1}\right)^{\r} \geq 1$, where here $x = \hat{A}_{k-1}^{1/\r} \r +1 > 1$. Multiplying by $D_k$ and taking an $\r$-th root, we obtain 
     \begin{equation}\label{eq:recursion_lower_bounding_Ak_unif_cvx}
         A_k^{1/\r} = (a_k+A_{k-1})^{1/\r} \geq A_{k-1}^{1/\r} + \frac{1}{\r}D_k^{1/\r} = A_{k-1}^{1/\r} + \frac{1}{\r}C^{\frac{1}{\r}}\lambda_k^{1/\r},
     \end{equation}
     and thus, $A_k^{1/\r} \geq \frac{1}{\r}C^{\frac{1}{\r}}\sum_{i=1}^k \lambda_i^{1/\r}$. Hence, we conclude by \cref{eq:convergence_rate_inexact} that for any $T \geq 1$, we have:
    \[
        f(y_T)-f(u) \leq \frac{\breg(u, x_0) + \deltainexact T}{A_T} \leq \frac{\r^{\r}( \breg(u, x_0)+ \deltainexact T)}{C\left(\sum_{i=1}^{T} \lambda_i^{1/\r}\right)^{\r}} =\bigopl{\r}{\frac{\breg[\psi](u, x_0) + \deltainexact T}{\muUnif\left(\sum_{i=1}^{T} \lambda_i^{1/\r}\right)^{\r}}}.
    \] 
\end{proof}

We note that in the proof above, if we had set $C \defi \frac{\muUnif}{2}\left(\frac{\dualnumber{r}(1-\sigma-\sigma')}{2(1+\sigma^{\dualnumber{r}})}\right)^{\r-1}$ instead, then we would get $E_k$ is $\delta$ and a negative term, which after concluding and using $f(y_T) - f(x^\ast) \geq 0$, yields a similar statement to the second property in \cref{thm:adaptive_alg_guarantee}.

We now proceed to prove how finding an approximate critical point of the regularized Taylor subproblems satisfies the oracle criteria.

\begin{proof}\linkofproof{lemma:inexact_criterion_w_taylor_expansion}
    Firstly, we have $\norm{\nabla f(y)-\nabla \taylorf[\q][f](y; x)}_\ast \leq \frac{\L}{(\q-1)!} \norm{y - x}^{\q+\nu-1}$, see \cref{lemma:taylor_and_derivaties_are_close_to_the_function}.
    Then, for $v_k = \nabla f(y_k)$ and $\lambda_k \defi \hat{\lambda}\norm{y_k-x_k}^{\r-\q-\nu} = \frac{\sigma(\q-1)!}{2L} \norm{y_k-x_k}^{\r-\q-\nu} $ we have
    \begin{align*}
         \begin{aligned}
             \lambda_k \norm{v_k - \hat{v}_k}_\ast &\leq \lambda_k \left( \norm{\nabla f(y_k) - \nabla \taylorf[\q][f](y_k;x_k)}_\ast +  \norm{\nabla \taylorf[\q][f](y_k;x_k) - \hat{v}_k}_\ast \right) \\
             &\circled{1}[\leq] \lambda_k\frac{\L}{(\q-1)!}\left(\norm{y_k-x_k}^{\q+\nu-1} + \norm{y_k-x_k}^{\q+\nu-1} \right) = \sigma\norm{y_k-x_k}^{\r-1}.
         \end{aligned}
        \end{align*}
        where $\circled{1}$ uses the bound above in \cref{lemma:taylor_and_derivaties_are_close_to_the_function} and the guarantee on $y_k$.
\end{proof}

We now present the following two lemmas, which develop the key ideas to show the convexity of some of our Taylor subproblems in \cref{lemma:inexact_criterion_w_taylor_expansion}.

\begin{lemma}[Hessian property of powers of some norms]\label{lemma:hessian_property_of_power_of_p_norm}
    Let $\norm{\cdot}$ be a norm such that $\psi(x) = \norm{x}^2$ is twice differentiable and $\muUnif$-strongly convex. Then, the function $g_{\q}(x) \defi \frac{1}{\q} \norm{x}_{\p}^{\q}$ satisfies 
    \[
        \nabla^2 h(x)[v, v]  \geq \frac{\muUnif}{2} \norm{x}_{\p}^{\q-2} \norm{v}_{\p}^2, \text{ for all } x, v \in \Rd.
    \] 
\end{lemma}

\begin{proof}
    Let $x \in \Rd$. Writing $g_{\q}(x) = \frac{1}{\q}(\psi(x))^{\q / 2}$, we differentiate $g_{\q}$ using the chain rule:
    \[
        \nabla g_{\q}(x) = \frac{1}{2}\psi(x)^{\frac{\q-2}{2}} \nabla\psi(x),
    \] 
    and thus, for any $v \in \Rd$:
    \begin{align*}
    \begin{aligned}
        \nabla^2 g_{\q}(x)[v, v] &= \frac{\q-2}{4}\psi(x)^{\frac{\q-4}{2}} ( v^T \nabla\psi(x)\nabla\psi(x)^T v) + \frac{1}{2}\psi(x)^{\frac{\q-2}{2}}\nabla^2\psi(x)[v, v] \\
        & \geq \frac{\muUnif}{2}\norm{x}^{\q-2}\norm{v}^2.
    \end{aligned}
    \end{align*}
    In the inequality, we dropped the first summand, which is nonnegative, and we substituted the value of $\psi(x)$ and used the strong convexity of $\psi$.

\end{proof}

\begin{lemma}\label{lemma:taylor_and_derivaties_are_close_to_the_function}
    Let $\q \in \mathbb{Z}_+$ and let $\norm{\cdot}$ be an arbitrary norm.  If $\norm{\nabla^{\q} f(x) - \nabla^{\q} f(y)}_\ast \leq \L \norm{x-y}^{\nu}$ then, for all $\ell \in \{0, 1, \dots, \q-1 \}$, we have
    \[
        \norm{\nabla^{\ell} f(x) - \nabla^{\ell} \taylorf[\q][f](x;y) }_\ast \leq \frac{\L}{(\q-\ell)!} \norm{x-y}^{\q - \ell + \nu}.
    \] 
\end{lemma}
We note that \citep[Lemma 2.5]{song2019unified} claimed this fact for $\ell = 0$ and $\ell =1$, but the proof for $\ell = 1$ was not correct since the chain rule was not used in their equation (A.12). We provide a complete proof and of a more general statement, namely for all $\ell$. Also note that above we followed the convention $\nabla^0 f \equiv f$.

\begin{proof}
    Define the quantity
    \[
        C_{i,j} \defi \frac{1}{j!} \int_0^1(1-\tau)^j \nabla^{i+1} f(y+\tau(x-y))[x-y]^{j+1} \mathrm{~d} \tau.
    \] 
    that for $i > 1$ satisfies, by integrating by parts:
    \begin{align*}
    \begin{aligned}
        C_{i, i}&{=}\frac{1}{i!} \Big[ (1-\tau)^i \nabla^{i} f(y{+}\tau(x{-}y))[x{-}y]^{i}\Big]_{\tau=0}^1 {+} \frac{1}{(i-1)!}\int_0^1 (1-\tau)^{i-1} \nabla^{i} f(y{+}\tau(x{-}y))[x{-}y]^{i} \mathrm{~d} \tau \\
        & = -\frac{1}{i!}\nabla^i f(y)[x-y]^i + C_{i-1, i-1}.
    \end{aligned}
    \end{align*}
    And also $C_{0, 0} = f(x) - f(y)$ by simple integration. In turn, these facts imply:
\begin{align*}
    \begin{aligned}
        C_{\q-1, \q-1} &= C_{0, 0} + \sum_{i=1}^{\q-1} \big(C_{i, i} - C_{i-1, i-1}\big) = f(x) - \taylorf[\q][f](x;y) +\frac{1}{\q!} \nabla^{\q} f(y)[x-y]^{\q} .
    \end{aligned}
    \end{align*}
    Taking derivatives with respect to $x$, we obtain
    \begin{align}\label{eq:aux:easy_derivative_of_cq_1}
    \begin{aligned}
        \nabla^\ell C_{\q-1, \q-1} &= \nabla^\ell f(x)-\nabla^\ell \taylorf[\q][f](x ; y)+\frac{1}{(\q-\ell)!} \nabla^{\q} f(y)[x-y]^{\q-\ell} \\
        & = \nabla^\ell f(x)-\nabla^\ell \taylorf[\q][f](x ; y)+\frac{1}{(\q-\ell-1)!} \nabla^{\q} f(y)[x-y]^{\q-\ell} \int_0^1(1-\tau)^{\q-\ell-1} \mathrm{~d} \tau.
    \end{aligned}
    \end{align}
    Now, if we differentiate the definition of $C_{i, j}$ with respect to $x$, we obtain, for $j > 1$:
\begin{align}\label{eq:aux:differentiating_cq_1}
    \begin{aligned}
        \nabla C_{i,j} &= \frac{j+1}{j!} \int_0^1(1-\tau)^{j} \nabla^{i+1} f(y+\tau(x-y))[x-y]^{j} \mathrm{~d} \tau \\
        &\quad + \frac{1}{j!} \int_0^1 \tau (1-\tau)^{j} \nabla^{i+2} f(y+\tau(x-y))[x-y]^{j+1} \mathrm{~d} \tau\\
        &\circled{1}[=] \frac{j+1}{j!} \int_0^1(1-\tau)^{j} \nabla^{i+1} f(y+\tau(x-y))[x-y]^{j} \mathrm{~d} \tau \\
        & \quad + \cancel{\frac{1}{j!}\Big[ \tau(1-\tau)^{j} \nabla^{i+1} f(y+\tau(x-y)) [x-y]^{j} \Big]_{\tau=0}^1}  \\ 
        &\quad  - \frac{1}{j!}\int_0^1 \nabla^{i+1} f(y+\tau(x-y))[x-y]^{j} \Big( (1-\tau)^{j} - j\tau(1-\tau)^{j-1}  \Big) \mathrm{~d}\tau \\
        &\circled{2}[=] \frac{1}{(j-1)!} \int_0^1 \nabla^{i+1} f(y+\tau(x-y))[x-y]^{j}\Big((1-\tau)^j + \tau(1-\tau)^{j-1} \Big) \mathrm{~d}\tau \\
        &= \frac{1}{(j-1)!} \int_0^1 \nabla^{i+1} f(y+\tau(x-y))[x-y]^{j}(1-\tau)^{j-1} \mathrm{~d}\tau \\
        &= C_{i, j-1}.
    \end{aligned}
    \end{align}
    Above, $\circled{1}$ holds by integrating the second summand by parts and canceling one term by using $j \neq 0$, and $\circled{2}$ groups and simplifies some terms, using $j > 0$. Thus, $\nabla^{\ell}C_{\q-1, \q-1}$ is also equal to $C_{\q-1, \q-1-\ell}$, as long as $\ell \leq \q-1 $. Note that we also have $\nabla C_{0, 0} = \nabla f(x)$ since $C_{0, 0} = f(x) - f(y)$.

    Combining \cref{eq:aux:easy_derivative_of_cq_1} and \cref{eq:aux:differentiating_cq_1}, we obtain, for any $\ell \in \{0, 1, \dots, \q-1\}$:
\begin{align*}
    \begin{aligned}
        (\q-\ell-1)!&\norm{\nabla^\ell f(x) - \nabla^\ell \taylorf[\q][f](x; y)}_\ast \\
        &= \norml{ \int_0^1\Big(\nabla^{\q} f(y)  - \nabla^{\q} f(y+\tau(x-y)) \Big)[x-y]^{\q-\ell} (1-\tau)^{\q-\ell-1} \mathrm{~d} \tau }_\ast \\
        &\circled{1}[=] \max_{v: \norm{v} \leq 1} \int_0^1\Big(\nabla^{\q} f(y)  - \nabla^{\q} f(y+\tau(x-y)) \Big)[x-y]^{\q-\ell}[v]^\ell (1-\tau)^{\q-\ell-1} \mathrm{~d} \tau   \\
        & \circled{2}[\leq]  \int_0^1 (1-\tau)^{\q-\ell-1} \mathrm{~d} \tau \cdot \max_{\tilde{\tau} \in [0,1], \norm{v}\leq 1} \Big(\nabla^{\q} f(y)  - \nabla^{\q} f(y+\tilde{\tau}(x-y)) \Big)[x-y]^{\q-\ell}[v]^\ell    \\
        & \circled{3}[\leq]  \frac{1}{\q-\ell} \max_{\tilde{\tau} \in [0,1]} \norm{\nabla^{\q} f(y)  - \nabla^{\q} f(y+\tilde{\tau}(x-y))}_\ast \norm{x-y}^{\q-\ell}  \\
        & \circled{4}[\leq]  \frac{\L}{\q-\ell} \norm{x-y}^{\q+\nu-1}.
    \end{aligned}
\end{align*}
    We used the definition of the dual norm in $\circled{1}$ for symmetric operators, and in $\circled{2}$ we bounded the expression by moving the $\max$ inside and we bounded part of the integrand by its maximum. In $\circled{3}$ we used the definition of the operator norm on a symmetric operator and used $\norm{v} \leq 1$. Finally, in $\circled{4}$ we used the H\"older continuity property \cref{eq:holder_cont} and $\tilde{\tau} \leq 1$.
\end{proof}

Now we have all of the ingredients to prove \cref{prop:convexity_of_taylor_subproblems}.

\begin{proof}\linkofproof{prop:convexity_of_taylor_subproblems}
    Let $v$ such that $\norm{v}_{\p} = 1$ and define $g_s^x(y) \defi \frac{1}{s}\norm{y-x}_{\p}^s$ as in \cref{lemma:hessian_property_of_power_of_p_norm} but with a shift. We have the following:
\begin{align*}
    \begin{aligned}
        0 &\circled{1}[\leq]  \nabla^2 f(y)[v, v] \circled{2}[\leq] \nabla^2 \taylorf[\q][f](y;x)[v, v] + \frac{\L}{(\q-2)!}\norm{x-y}_{\p}^{\q-2+\nu} \\
        &\circled{3}[\leq] \nabla^2 \taylorf[\q][f](y;x)[v, v] + \frac{2L}{\hat{\mu}(\q-2)!} \nabla^2 g_{\q+\nu}^x(y)[v, v] \leq \nabla^2 F(y)[v, v],
    \end{aligned}
    \end{align*}
    where $\circled{1}$ holds by convexity of $f$ while $\circled{2}$ is by \cref{lemma:taylor_and_derivaties_are_close_to_the_function}, and $\circled{3}$ uses \cref{lemma:hessian_property_of_power_of_p_norm} which also holds true for the shifted function we defined above, without loss of generality by shifting the domain so $x$ is $0$. Thus, $F(y)$ is convex.

    For the second part of the proposition, fix $\p \in (1, 2]$ and use that by \cref{lemma:regularity_of_regularizers}, it is $\hat{\mu} = 2(\p-1)$, by rescaling, which translates to the requirement for the subproblem $\frac{\L}{(\p-1)(\q-2)!} \leq M = \frac{1}{\hat{\lambda}} = \frac{\L}{\sigma(\q-1)!} $ in \cref{lemma:inexact_criterion_w_taylor_expansion}, equivalent to $\sigma \leq \frac{\p-1}{\q-1}$.
\end{proof}

We are now ready to prove the convergence rates for high-order smooth convex functions.

\begin{proof}\linkofproof{thm:guarantee_high_order_smooth_cvx_methods}
\paragraph{Solving the case $q+\oldnu \leq \max\{2, p\}$.}
    Recall that we defined $\m \defi \max\{2, \p\}$. We use the regularizers in \cref{lemma:regularity_of_regularizers}. Depending on whether $\p > 2$, one or the other of these two regularizers is $( \bigop{\p}{1}, \m)$-uniformly convex with respect to $\norm{\cdot}_{\p}$ and therefore that regularizer is, by \cref{lemma:inexact_unif_convex_from_unif_convex}, $\deltainexact$-inexact $(\muUnif, \q +\nu)$-uniformly convex regularizer with respect to $\norm{\cdot}_{\p}$, for some $\deltainexact$, $\muUnif$ that are a function of a constant $a$, that we will determine later. We use such regularizer. Note that if $\p \leq 2 $, the restriction $\q+\nu \leq \m  = \max\{2,\p\} = 2$ along with $\q \geq 1$, $\nu \in (0, 1]$ implies $\q=1$. But for $\p > 2$ we may still be working in greater order $\q>1$. 

    As established in \cref{lemma:inexact_criterion_w_taylor_expansion}, we can solve the inexact proximal problems in \cref{alg:non_euclidean_accelerated_proximal_point_unif_convex} with a single call of the $\q$-th order oracle if we set $\r = \q + \nu$ for the proximal parameter $\lambda_k = \frac{\sigma(\q-1)!}{2L}$. This parameter $\lambda_k$, unlike for other cases, does not depend on $y_k$. This fact avoids having to perform a binary search or an adaptive guess on the value of the proximal parameter, so we can use \cref{alg:non_euclidean_accelerated_proximal_point_unif_convex} instead of \cref{alg:adaptive_non_euclidean_accelerated_proximal_point_unif_convex}. Set $\sigma = \sigma' = 1/4$ for simplicity. Applying the results from the previous section, we obtain a convergence rate of
\[
    f(y_T) - f(\xast) \leq \bigopl{\p, \r}{\frac{\L(R_{\p}^{\m} + \deltainexact T)}{\muUnif T^{\r}}} = \bigopl{\p, \r}{\frac{\L\norm{x-x_0}_{\p}^{\m}}{a^{\frac{\m}{\r}} T^{\r}} + \L a^{\frac{\m}{\m-\r}} T^{1-\r} },
\] 
    where $R_{\p}= \Theta_{\p}(\breg[\psi](\xast, x_0)^{1/\r})$ is the initial distance $ \norm{\xast-x_0}_{\p}$ measured with the $\p$-norm. But we could also set it to an upper bound.
The bound above is convex on $a>0$. By taking derivatives and finding a zero, the bound is found to be optimized at a value $a = \bigopl{\p,\r}{R_{\p}^{\r\frac{\m-\r}{\m}}T^{-\frac{\r(\m-\r)}{\m^2}}}$. Thus, if we make this choice of $a$, the convergence rate becomes:
\[
    f(y_T) - f(\xast) =  \bigopl{\p, \r}{\frac{\L R_{\p}^{\r}}{T^{\frac{mr +\r -\m}{\m}}}} = \bigopl{\p, \r}{\frac{\L R_{\p}^{\q+\nu}}{T^{\frac{(\m+1)(\q+\nu) -\m}{\m}}}}.
\] 
Note that the step sizes $a_k$ depend on the constant $a$ via $\muUnif$ via the constant $C$.

\paragraph{Solving the case $q+\oldnu > \max\{2, p\}$.}

    We run \cref{alg:adaptive_non_euclidean_accelerated_proximal_point_unif_convex} with $\r = \m =  \max\{2, \p \}$ with $\sigma = \sigma' = \frac{1}{4}$ for simplicity. One may want to run it with $\sigma = \frac{p-1}{q-1}$ when $p \in (1, 2]$ and $q \geq 3$, according to \cref{remark:critical_points_exist_and_sometimes_only_one}. Note that this only changes constants $\bigop{q+\nu, r}{1}$ in our analysis. From \cref{eq:lower_bounding_Ak_in_adaptive} in the analysis of \cref{alg:adaptive_non_euclidean_accelerated_proximal_point_unif_convex}, we have that there is a set of iterates $Q_T\subseteq [T]$ and some numbers $r_k \geq 0$ such that
\begin{equation}\label{eq:summary_guarantee_adaptive_algorithm}
        A_{T}^{1/\r} \geq \widehat{C} \sum_{k \in Q_{T}} \widehat{\lambda}_k^{1/\r} (\alpha^{1/\r})^{r_k -2}.
\end{equation}
    for the constant $\widehat{C} \defi C^{1/\r} / (2r)$ where $C$ is defined in \cref{alg:adaptive_non_euclidean_accelerated_proximal_point_unif_convex}, and such that $\sum_{k \in Q_T} r_k = (T-1) / 2$.
    For notational convenience, we use $\hat{q} \defi \q +\nu$. By \cref{lemma:inexact_criterion_w_taylor_expansion}, in the case of high-order methods, we can implement the oracle with one call to the $\q$-th order oracle for $\lambda_k^{\frac{\r}{\r-\hat{q}}} \defi \hat{\lambda}^{\frac{\r}{\r-\hat{q}}} \norm{\yktilde[k]-x_k}^{\r}$ for $\hat{\lambda} \defi \frac{\sigma (\q-1)!}{2L}$. Thus, the analysis in \cref{thm:adaptive_alg_guarantee} yields 
    \[
        \breg[\psi](\xast, x_0) \geq \frac{1-\sigma - \sigma'}{2}\sum_{k \in Q_T} A_k \norm{\yktilde[k]-x_k}^{\r}\widehat{\lambda}_k^{-1} =  \frac{1-\sigma - \sigma'}{2}\hat{\lambda}^{\frac{\r}{\r-\hat{q}}}\sum_{k \in Q_T} A_k \widehat{\lambda}_k^{\frac{\hat{q}}{\r-\hat{q}}}.
    \] 
    We will make use of the reverse H\"older inequality, with which is a common tool in analysis of Monteiro-Svaiter acceleration. For $s> 1$ and positive numbers $\alpha_i, \beta_i$, we have
    \[
        \sum_{i} \alpha_i \beta_i \geq \left(\sum_i \alpha_i^{1/s}\right)^s \left(\sum_i \beta_i^{1 / (1-s)}\right)^{1-s}. 
    \] 
    We apply this inequality in $\circled{2}$ below, for $s = \frac{\hat{q} + \r\hat{q} - \r}{\r\hat{q}} > 1$ where the inequality for $s$ holds by the assumption of this section $\hat{q} = \q + \nu > \max\{2, \p\} = \r$. Also take into account that $\frac{1}{1-s} = \frac{\r\hat{q}}{\r-\hat{q}}$. Thus, we obtain the following estimate    
    \begin{align}\label{eq:estimate_adaptive_method}
         \begin{aligned}
             \widehat{C}^{-1}  A_t^{1/\r} &\circled{1}[\geq] \sum_{k\in Q_t} \widehat{\lambda}_k^{1/\r} (\alpha^{1/\r})^{r_k -2} = \sum_{k\in Q_t}  \left(A_k^{s-1} (\alpha^{1/\r})^{r_k -2} \right) (A_k^{1-s}\widehat{\lambda}_k^{1/\r} ) \\
             &\circled{2}[\geq] \left(\sum_{k\in Q_t} A_k^{1 - 1/s}(\alpha^{1/(rs)})^{r_k -2}\right)^s  \left(\sum_{k\in Q_t} A_t \widehat{\lambda}_k^{\frac{\hat{q}}{\r - \hat{q}}}\right)^{1-s} \\
             &\circled{3}[\geq] \left(\sum_{k\in Q_t} A_k^{\frac{\hat{q}-\r}{\hat{q} + \r\hat{q}-\r}} r_k c_{\alpha, s} \right)^s\left(\frac{2\breg[\psi](\xast, x_0)}{1-\sigma-\sigma'}\hat{\lambda}^{-\frac{\r}{\r-\hat{q}}}\right)^{1-s}
         \end{aligned}
    \end{align}
    where $\circled{1}$ uses $\cref{eq:summary_guarantee_adaptive_algorithm}$. In $\circled{3}$ we used \cref{lemma:numerical_aux_bound_MS} with $c_{\alpha,s} = \alpha^{-2/(rs)}\min\{1, \frac{1}{rs}\ln(\alpha)\}$. Now by using the notation $B_k \defi A_k^{\frac{ \hat{q}- \r}{\hat{q} + \r\hat{q} - \r}}$ and 
    \[
        \Gamma \defi \widehat{C}^{1/s} c_{\alpha, s} \left(\frac{2\breg[\psi](\xast, x_0)}{1-\sigma-\sigma'}\hat{\lambda}^{-\frac{\r}{\r-\hat{q}}}\right)^{\frac{1}{s}-1}
    \] 
    we have
    \[
        B_t^{\frac{\hat{q}}{\hat{q}-\r}} = A_t^{\frac{1}{rs}}  \geq \Gamma\sum_{k \in Q_t \cap [t]} B_k r_k \text{ for all } t.
    \] 
    and note that the exponent above on the left hand side is $\frac{\hat{q}}{\hat{q}-\r} >1$. Thus, we can use \citep[Lemma 3]{carmon2022optimal} which yields
    \[
        B_T \geq \left( \frac{\hat{q}-\r+\r^2}{\hat{q} + \r\hat{q} - \r}\Gamma \sum_{k \in Q_T} r_t\right)^{\frac{\hat{q}- \r}{\r}},
    \] 
    or equivalently
    \[
        A_T \geq \left(\frac{\hat{q}-\r+\r^2}{\hat{q} + \r\hat{q} - \r}\Gamma \frac{T-1}{2}\right)^{\frac{\hat{q}+\r\hat{q}-\r}{\r}}  = \bigomegapl{\hat{q}, \r}{\breg[\psi](\xast, x_0)^{\frac{\r-\hat{q}}{\r}}\hat{\lambda}^{-1} T^{\frac{\hat{q}+\r\hat{q}-\r}{\r}}}.
    \] 
    Note that above we took into account that $\alpha$ is a constant. The lower bound on $A_T$ and the same reasoning as in \cref{eq:convergence_rate_inexact} yield the convergence rate
    \[
        f(y_T) - f(\xast) =  \bigopl{\hat{q}, \r}{\frac{\L R_{\p}^{\hat{q}}}{T^{\frac{(\r+1)\hat{q}-\r}{\r}}}} = \bigopl{\hat{q}, \r}{\frac{\L R_{\p}^{\q+\nu}}{T^{\frac{(\m+1)(\q+\nu)-\m}{\m}}}}, 
    \] 
where $R_{\p} = \Theta_{\p}(\breg[\psi](\xast, x_0)^{1/\r})$ is the initial distance to a minimizer measured with the $\p$-norm, up to constants, due to our choice of regularizer. 
\end{proof}

\begin{lemma} \label{lemma:numerical_aux_bound_MS}
    For $a > 1$ and $b \geq 0$, we have $a^{b-2} \geq a^{-2}\min\{1, \ln(a)\} b$.
\end{lemma}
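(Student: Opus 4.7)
The plan is to reduce the inequality to a clean one-line exponential bound. Multiplying both sides by $a^{2}$, the claim is equivalent to showing $a^{b}\geq \min\{1,\ln a\}\,b$ for all $a>1$ and $b\geq 0$. The case $b=0$ is trivial since the left-hand side equals $1$ and the right-hand side is $0$, so we may assume $b>0$.

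The single tool I would use is the standard exponential inequality $e^{x}\geq 1+x$, valid for all $x\in\R$. Setting $x=b\ln a$ gives the key estimate
\[
    a^{b}\;=\;e^{b\ln a}\;\geq\;1+b\ln a.
\]
From here I split according to whether $\ln a\leq 1$ or $\ln a>1$. If $\ln a\leq 1$, then $\min\{1,\ln a\}=\ln a$, and the displayed bound gives $a^{b}\geq 1+b\ln a\geq b\ln a=b\min\{1,\ln a\}$. If instead $\ln a>1$, then $\min\{1,\ln a\}=1$, and the displayed bound gives $a^{b}\geq 1+b\ln a\geq 1+b\geq b=b\min\{1,\ln a\}$.

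Combining the two cases yields $a^{b}\geq b\min\{1,\ln a\}$, and dividing through by $a^{2}$ recovers the statement. There is no real obstacle here; the only thing to be careful about is the boundary behavior at $b=0$ and the correct sign of $\ln a$ (which is positive because $a>1$), both of which are handled transparently by the case split above.
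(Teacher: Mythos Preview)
Your proof is correct. It is, however, a genuinely different route from the paper's. The paper argues by calculus: it checks the inequality at $b=0$ and then shows that the derivative in $b$ of the left-hand side, namely $a^{b-2}\ln a$, dominates the derivative of the right-hand side, $a^{-2}\min\{1,\ln a\}$, for all $b\geq 0$ (this follows since $a^{b-2}\geq a^{-2}$ and $\ln a\geq \min\{1,\ln a\}$). Your argument instead reduces to $a^{b}\geq b\min\{1,\ln a\}$ and dispatches it in one shot via the elementary inequality $e^{x}\geq 1+x$ with $x=b\ln a$, followed by a case split on whether $\ln a\leq 1$. Both are short and elementary; your version has the slight advantage of invoking only a single well-known inequality rather than a monotonicity-via-derivatives step, while the paper's version avoids the case split.
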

\begin{proof}
    It holds at $b = 0$. Taking derivatives with respect to $b$, it is clear that the derivative of the left hand side is greater than the one of the right hand side for all $b \geq 0$.
\end{proof}

\begin{proof}\linkofproof{prop:ball_opti_oracle_acceleration}
    Analogously to the case $\q+\nu > \max\{2, \p\}$ in the proof of \cref{thm:guarantee_high_order_smooth_cvx_methods}, we have for $r = m = \max\{2, \p\}$, that by \cref{thm:adaptive_alg_guarantee}:
    \[
        \breg[\psi](\xast, x_0)  = \bigomegal{\frac{1-\sigma - \sigma'}{2}\sum_{k \in Q_t} A_k \rho^{\r}\widehat{\lambda}_k^{-1} },
    \] 
    and thus, using the same as \cref{eq:estimate_adaptive_method} where the reverse H\"older's inequality is applied for $s = \frac{1+\r}{\r} > 1$, we obtain
    \[
        \widehat{C} A_t^{1 / \r}  = \bigomegapl{\r}{ \left(\sum_{k\in Q_t} A_k^{\frac{1}{\r+1}} r_k \right)^{\frac{\r+1}{\r}} \rho \breg[\psi](\xast, x_0)^{-1 / \r} }.
    \] 
    Taking a power of $\frac{\r}{\r+1}$ and using $B_k \defi A_k^{\frac{1}{\r+1}}$ we obtain
    \[
        B_t  = \bigomegapl{\r}{\rho^{\frac{\r}{\r+1}}\breg[\psi](\xast, x_0)^{-\frac{1}{\r+1} } \sum_{k \in Q_t} B_k r_k} \text{ for all } t.
    \] 
    Thus, by \citep[Lemma 3]{carmon2022optimal}, and the fact that for our regularizers it is $R_{\p} = \Theta_{\p}(\breg[\psi](\xast, x_0)^{1/\r})$, we obtain $B_T  \geq \exp\left(\bigomegapl{\r}{T (\rho / R_{\p})^{\frac{\r}{\r+1}} + \ln(A_1)} \right)$. Note that by \cref{eq:lyapunov_property_inexact_prox} and the fact that $E_i \leq 0$, it is enough to obtain $A_T \geq \frac{\breg[\psi](\xast, x_0)}{\epsilon}$ in order to reach an $\epsilon$-minimizer. Hence, there is a  $T = \widetilde{\Theta}_{\r}\left(\left(\frac{R_{\p}}{\rho}\right)^{\frac{\r}{\r+1}} \right)$ such that after at most that number of iterations, we find an $\epsilon$-minimizer. 
\end{proof}

    \section{Unaccelerated Proximal Point Algorithm Analysis}\label{sec:unaccelerated_inexact_high_order}
    In this section, we analyze an algorithm for an unaccelerated method for high-order smooth convex optimization. In particular, this method matches the lower bound when smoothness is measured with respect to $\norm{\cdot}_\infty$, a case that was not covered by the accelerated method in \cref{thm:guarantee_high_order_smooth_cvx_methods}. We also analyze an unaccelerated non-Euclidean ball-optimization-oracle algorithm.

    The algorithm is simple. Sequentially iterate 
    \begin{equation}\label{eq:unacc_proximal_point_alg}
        x_{k+1}, v_k \gets \iproxoracle[r](x_k, \lambda),
    \end{equation}
    where $\iproxoracle[r]$ is the inexact proximal oracle in \cref{eq:inexact_prox_oracle_properties} and $r = q + \nu$, $\lambda_k = 1 / \L$ if the function $f$ to be optimized is convex and $q$-th order $(\L, \nu)$-H\"older smooth with respect to a norm $\norm{\cdot}$. This is the setting explained in \cref{lemma:inexact_criterion_w_taylor_expansion}, that requires a single call to the $q$-th order oracle.
    The convergence of the algorithm after $T+1$ iterations depends on $R \defi \max_{k\in[T]} \norm{x_i-\xast}$ although any upper bound works as well. For instance, if $f$ is the sum of a high-order smooth function and the indicator function of a compact set $\X$, we can use its diameter, or we can add the constraint $\X = B^{\norm{\cdot}_{\p}}(x_0, C\norm{x_0- \xast}_{\p})$ for some $C \geq 1$.
    \begin{theorem}\label{thm:unaccelerated_alg}
        After $T+1$ iterations, the algorithm described in \cref{eq:unacc_proximal_point_alg} satisfies.
          \[
              f(\xk[T+1]) - f(\xast) = \bigopl{q+\nu}{\frac{\L R^{q+\nu}}{T^{q+\nu-1}}}.
          \] 
    \end{theorem}
    \begin{proof}
        Recall that the oracle requires $v_k \in \subdiffeps[\oldepsilon_k] f(y_k)$. In this algorithm, we assume $3\sigma + \frac{2A_k}{A_{k-1}}\sigma'  \in (0, 1)$ for all $k \in [T]$    We define $U_k \defi f(x_{k+1})$ and $A_k = A_{k-1} + a_k = \sum_{i=1}^k a_k$, for $a_k>0$ to be determined later, and $G_k \defi U_k - L_k$. The lower bound $L_k$ on $f(\xast)$ is defined via
\[
A_k L_k \defi \sum_{i=1}^k \ak[i] f(\xk[i+1]) + \sum_{i=1}^k \ak[i]\innp{ v_i, \xast - \xk[i+1]} - a_i\oldepsilon_i \leq f(\xast),
\] 
        using the inexact subgradient property in the definition of the inexact proximal oracle. Note that in particular $A_0L_0 \defi 0$. We have, for all $k \geq 1$:
\begin{align}\label{eq:bound_unacc_inexact_ppa}
         \begin{aligned}
             A_{k} &G_{k} - A_{k-1}G_{k-1} \circled{1}[=]A_{k-1}(f(x_{k+1})-f(\xk[k])) + \Ccancel[red]{ \ak f(\xk[k+1])} \\ \\
             & \quad -\left( \Ccancel[red]{ \sum_{i=1}^{k} \ak[i] f(\xk[i+1])} + \Ccancel[blue]{\sum_{i=1}^{k-1} \ak[i]\innp{ v_i, \xast - \xk[i+1]} - a_i\oldepsilon_i} \right) - \ak[k]\innp{ v_k, \xast - \xk[k+1]} + a_k \oldepsilon_k \\
             & \quad + \left(\Ccancel[red]{\sum_{i=1}^{k-1} \ak[i] f(\xk[i+1])} + \Ccancel[blue]{\sum_{i=1}^{k-1} \ak[i]\innp{ v_i, \xast - \xk[i+1]} - a_i\oldepsilon_i}\right)  \\
             &\circled{2}[\leq] A_{k-1}\innp{v_k, x_{k+1} -x_k} - a_k R\norm{v_k}_\ast  + A_k \oldepsilon_k \\
             &\circled{3}[\leq] A_{k-1} \innp{\hat{v}_{k}, x_{k+1} - x_k} + A_{k-1}\norm{v_k - \hat{v}_k}_\ast\norm{x_{k+1} - x_k} \\
             & \quad + \frac{A_{k-1}\lambda^{1 /(r-1)}\norm{v_k}_\ast^{\dualnumber{r}}}{2 \cdot 2^{1/(r-1)}} + \frac{2 R^r a_k^r}{r\lambda A_{k-1}^{r-1}}\left(\frac{2}{\dualnumber{r}}\right)^{r-1} + A_{k} \oldepsilon_k \\
             &\circled{4}[\leq] - \frac{A_{k-1}}{\lambda}\left(\frac{1}{2}-\frac{3\sigma}{2}-\frac{A_k}{A_{k-1}}\sigma'\right)\norm{x_{k+1}-x_{k}}^r  + \bigopl{r}{\frac{R^r a_k^r}{\lambda A_{k-1}^{r-1}}} \\
             &\circled{5}[=] \bigopl{r}{\frac{R^r a_k^r}{\lambda A_{k-1}^{r-1}}}.
         \end{aligned}
    \end{align}
        Above, $\circled{1}$, substitutes the definition and cancels some terms, $\circled{2}$ uses the enlarged subgradient property of $v_k$ for the first term between $x_{k+1}$ and $x_k$, and uses Cauchy-Schwarz and the definition of $R$ on the second term. Then $\circled{3}$ adds and subtracts some terms to the first summand and applies Cauchy-Schwarz to make terms that we can bound by the oracle condition, appear, and we apply Young's inequality to the second summand so we will be able to cancel the term depending on $\norm{v_k}_\ast$ in which we add and subtract $\hat{v}_k$, apply the triangular inequality and the means inequality $(a+b)^{\dualnumber{r}} \leq 2^{(\dualnumber{r}-1)}( a^{\dualnumber{r}}+b^{\dualnumber{r}})$, so in $\circled{4}$ we use \cref{eq:inexact_prox_oracle_properties,eq:property_subgrads_of_norm_to_the_r} to these terms and also the first summands. Finally by the assumption on $\sigma, \sigma'$, in $\circled{5}$ we drop the first term.
        Adding \cref{eq:bound_unacc_inexact_ppa} up for $k \in [T]$, using $A_0 = 0$, reorganizing and recalling that $G_T$ is a primal-dual gap, we obtain, when we choose $a_k = \Theta_{r}(k^{r-1})$, and thus $A_k = \Theta_{r}(k^{r})$:
          \[
              f(\xk[T+1]) - f(\xast)  \leq G_T \leq \bigopl{r}{\frac{1}{A_T}\sum_{k=1}^T  \frac{a_k^{r} R^{r}}{\lambda A_{k-1}^{r-1}}} = \bigopl{r}{\frac{R^{r}}{\lambda T^{r-1}}} = \bigopl{q+\nu}{\frac{\L R^{q+\nu}}{T^{q+\nu-1}}}.
          \] 
          Note that the term appearing in the condition regarding $\sigma'$ is $\frac{A_k}{A_{k-1}} = \Theta((1+ \frac{1}{k})^r)  = \bigop{r}{1}$.
    \end{proof}

    \begin{remark}[Unaccelerated Ball Optimization Oracle Analysis]
        Let $\newtarget{def:indicator_function}{\indicator{X}}(x)$ be the indicator function of a set $\X$, that is $0$ if $x \in \X$ and $+\infty$ otherwise.
    We note that for a closed convex set $\X$ and a function $f$ with minimizer $\xast$ when constrained to $\X$, we converge with linear rates if we implement
    \begin{equation*}
        x_{k+1} \in \argmin_{x\in\X}\left\{f(x) + \frac{1}{2\lambda_k}\norm{x_k-x}^2\right\},
    \end{equation*}
    provided that we have the guarantee that at each iteration either $\norm{x_{k+1} - x_k} \geq \rho$ or we find a minimizer. Indeed, let $v_k \in \partial (f+\indicator{X})(x_{k+1})$ such that $\norm{g_k}_\ast = \frac{1}{\lambda}\norm{x_k-x_{k+1}}$, cf. \cref{def:moreau_env_and_prox}, and \cref{properties_of_M:5}. As above define $R \defi \max_{k\in[T]} \norm{x_i-\xast}$ or as an upper bound of it. For instance, if $\X$ is compact, we can use its diameter, or we can choose $\X = B_{\norm{\cdot}_{\p}}(x_0, \bigo{\norm{x_0- \xast}_{\p}})$ or the diameter of the sublevel set of the function at $x_0$, since this method decreases the function value. 

    Denote $\M[k] \defi \M[\lambda_k]$ the non-Euclidean Moreau envelope with parameter $\lambda_k$, and define $U_k \defi \M[k+1](x_{k+1})$ and the lower bound $L_k$ on $f(\xast)$ as $A_k L_k \defi \sum_{i=1}^k \ak[i] \M[i](\xk[i]) + \sum_{i=1}^k \ak[i]\innp{ g_i, \xast - \xk[i]} \leq A_k f(\xast)$. Recall $A_k = A_{k-1} + a_k = \sum_{i=1}^k a_k$, for $a_k>0$ to be determined later, and let $G_k \defi U_k - L_k$. If we choose $a_k = A_k \norm{x_k-x_{k+1}}/(2R)$, we have, for all $k \geq 1$ (note $A_0 = 0$):
    \begin{align}\label{eq:bounding_the_discr_error_in_unacc_ppa_BOO}
         \begin{aligned}
             A_{k} &G_{k} - A_{k-1}G_{k-1} \circled{1}[=]A_{k-1}(\M[k+1](\xk[k+1])-\M[k](\xk[k])) + \ak \M[k+1](\xk[k+1]) \\ \\
             & \quad -\ak[k] \M[k](\xk[k]) \Ccancel[red]{- \sum_{i=1}^{k-1} \ak[i] \M[i](\xk[i])} - \Ccancel[blue]{\sum_{i=1}^{k-1} \ak[i]\innp{ g_i, \xast - \xk[i]}} - \ak[k]\innp{ g_k, \xast - \xk[k]} \\
             & \quad \Ccancel[red]{+ \sum_{i=1}^{k-1} \ak[i] \M[i](\xk[i])} + \Ccancel[blue]{\sum_{i=1}^{k-1} \ak[i]\innp{ g_i, \xast - \xk[i]}}  \\
             &\circled{2}[\leq] -\frac{A_k}{2\lambda}\norm{x_k - x_{k+1}}^2 + \frac{a_k}{\lambda} \norm{x_k - x_{k+1}}R.  \\
             &\circled{3}[\leq]  0.
         \end{aligned}
    \end{align}
        Above, $\circled{1}$ just uses the definitions and cancels some terms, and $\circled{2}$ groups some terms, uses the descent \cref{def:moreau_env_and_prox}, \cref{properties_of_M:6}, H\"older's inequality along with the definition of $R$, and $\norm{g_k}_\ast = \frac{1}{\lambda}\norm{x_k-x_{k+1}}$. In $\circled{3}$ we used the value of $a_k$.

If we solve the equation $a_k = (A_{k-1}+a_k) \norm{x_k-x_{k+1}}/(4R)$, we obtain $a_k = A_{k-1} (\frac{4R}{\norm{x_k-x_{k+1}}}-1)^{-1}$ and so $A_k = A_{k-1}+a_k = A_{k-1}\left( \frac{1}{1-\norm{x_k-x_{k-1}}/(4R)}\right) \geq A_{k-1} (\frac{1}{1-\rho/(4R)}) \geq A_1 (\frac{1}{1-\rho/(4R)})^{k-1} \geq A_1\exp((k-1)\frac{\rho}{4R})$, where we used the lower bound that is guaranteed on the distance traveled from one point to the next one. %
    Hence, adding up we conclude:
        \[
            f(\xk[T+2]) - f(\xast) \leq \M[T+1](\xk[T+1]) - f(\xast) \leq G_T \leq \frac{A_1G_1}{A_T} \leq G_1 \exp\left(-(k-1)\frac{\rho}{4R} \right). 
        \] 
        So we obtain an $\epsilon$-minimizer is $\bigotilde{\frac{R}{\rho}\ln(\frac{G_1}{\epsilon})}$ iterations.
    \end{remark}

\section{Proofs from \nameref{sec:lower_bounds}: Lower Bounds}\label{app:lower_bound_proofs}

\begin{proof}\linkofproof{lemma:randomized_smoothing}
\begin{enumerate}
    \item Let $\X= \ballnorm[\beta]$. 
        The Lipschitzness of $\smoothing_\beta[f]$ is a direct consequence of the smoothing as an averaging and $f$ being $G$-Lipschitz. For the smoothness, we first note that we have $\nabla \smoothing_\beta[f](x) = \frac{\vol(\partial\X)}{\vol(\X)}\E[f(x+ v) w_v][v\sim\nu_{\partial \X}]$, where $w_v$ is defined as an outward $\|\cdot\|_2$ unit vector normal to $\partial \X$, that is, $w_v \in \partial (\norm{\cdot})(v)$ is a subgradient of the norm at $v$. 
        By \cref{properties_of_M:5} of \cref{prop:properties_of_M} with $x \gets 0$, $y \gets v$, $\lambda \gets 1$, and taking into account that since $w_v$ is normal to $\partial X$, we have  $ w_v \propto g \in \subdiffsqnorm{0}(v) = \partial(\frac{1}{2}\norm{\cdot}^2)(v)$, and $ \innp{v, w_v} = \norm{v}\norm{w_v}_\ast = \beta \norm{w_v}_\ast$.
Thus, by the divergence theorem  on the identity function $\phi(v) = v$ and on $\X$:
    \begin{align}\label{eq:conclusion_div_thm}
     \begin{aligned}
         d \vol(\X) &= \int_{\X} \sum_{i=1}^d \frac{\partial \phi(v)}{\partial v_i}\d \nu_{\X}(v) =  \int_{\partial\X} \innp{v, w_v}\d \nu_{\partial X}(v)
         = \vol(\partial\X) \beta \E[\norm{w_v}_{\ast}][v\sim\nu_{\partial \X}].
     \end{aligned}
\end{align}

        Finally, using that $f$ is $G$-Lipschitz with respect to $\norm{\cdot}$, we obtain
\begin{align*}
     \begin{aligned}
         \norm{\nabla \smoothing_\beta[f](x) - \nabla \smoothing_\beta[f](y)}_{\ast} &= \frac{\vol(\partial\X)}{\vol(\X)}\norm{\E[f(x+ v)w_v- f(y+v)w_v][v\sim\nu_{\partial \X}]}_{\ast} \\ 
         &\leq  \frac{\vol(\partial\X)}{\vol(\X)}\E[\abs{f(x+v)- f(y+v)}\norm{w_v}_{\ast}][v\sim\nu_{\partial \X}] \\
         & \leq G\norm{x-y}\frac{\vol(\partial\X)}{\vol(\X)}\E[\norm{w_v}_{\ast}][v\sim\nu_{\partial \X}] \\
         & = \frac{Gd}{\beta}\norm{x-y},
     \end{aligned}
\end{align*}
        where the last equality is due to \cref{eq:conclusion_div_thm}.

\item It can be argued by induction on $\q$ similarly to \citep[Corollary 2.4]{agarwal2018lower} but using the previous part.
    We have the statement for $\q=0$ since the Lipschitzness of a function is preserved after smoothing. Let $v_1, \dots, v_{\q}$  be arbitrary unit vectors with respect to $\norm{\cdot}$, and let $G_{i} =\frac{d^{i} 2^{i(i+1)/2}}{\beta^{i}} G $. If the result holds for $\q-1$, we have that $\smoothing_{\beta/2^{\q}}\nabla^{\q-1} \S{\q-1}{\beta} [f](x)[v_1, \dots, v_{\q-1}]$ is differentiable and its differential is \\ 
    $\nabla^{\q}\S{\q}{\beta}[f](x)[v_1, \dots, v_{\q-1}]$, by commutativity of the smoothing and differential operator, hence by the first part it is Lipschitz w.r.t $\norm{\cdot}$ with constant $\frac{d 2^{\q}}{\beta}G_{\q-1}=G_{\q}$. Similarly, for $i < \q$, by the commutativity of the operators again, we have that $\nabla^i \S{\q}{\beta}[f](x)[v_1, \dots, v_i] = \smoothing_{\beta/2^{\q}} \nabla^i \S{\q-1}{\beta}[f](x)[v_1, \dots, v_i]$, and we know that the right hand side is $G_{i}$ Lipschitz by induction hypothesis and the fact that $\smoothing_{\beta/2^{\q}}$ preserves the Lipschitzness.
\item By Lipschitzness of $f$, %
$\abs{\S{\q}{\beta}[f](x) - f(x)} \leq \max_{x\in\X} \norm{x} G=\beta G$.

\item This is a direct consequence of the convexity of $f$ and the smoothing as an averaging. 

\item By expanding the expectations in the definition of \(\S{\q}{\beta}\), we get that \( \S{\q}{\beta}[f](x) = \mathbb{E}_{y \sim \mu_x}[f](y) \) where \( \mu_x \) is a distribution supported in \( B^{\norm{\cdot}}_{(1 - 2^{-\q}) \beta}(x) \).
\end{enumerate}
\end{proof}

\begin{remark}
    For \(\p\)-norm balls \(\X \defi \ballnorm[\beta][\norm{\cdot}_{\p}]\) with \(\p \in [1, \infty)\), we previously established in part \(1\) of \cref{lemma:randomized_smoothing} that $\frac{\vol(\partial\X)}{\vol(\X)} \E[\norm{w_v}_{\dualnumber{p}}][v \sim \nu(\partial \X)] = \beta^{-1} d$. However, the ratio \(\frac{\vol(\partial\X)}{\vol(\X)}\) behaves differently depending on \(\p\). Specifically, it holds that $\frac{\vol(\partial\X)}{\vol(\X)} = \bigop{\p}{\beta^{-1} d^{1/2 + 1/\p}}$, as shown in \citep[Lemma 22]{wang2019selective}. In contrast, for \(\p = \infty\), we find \(\frac{\vol(\partial\X)}{\vol(\X)} = \beta^{-1} d\). This discrepancy reveals a phase transition in the behavior of \(\frac{\vol(\partial\X)}{\vol(\X)}\) across \(\p\), even though the product \(\frac{\vol(\partial\X)}{\vol(\X)} \E[\norm{w_v}_{\dualnumber{p}}][v \sim \nu(\partial \X)]\) remains constant at \(\beta^{-1} d\) for all \(\p\).
\end{remark}

\begin{proof}\linkofproof{lemma:softmax}
Let us denote $r_j = \exp(\langle a^j, x\rangle / \mu)$ for simplicity. \\
$(a)$ We follow the idea in \citep[Example 5.15]{Beck:2017}. The derivatives of $\smax{\mu}(Ax)$ is
    $$\frac{\partial \smax{\mu}(Ax)}{\partial x_i} = \frac{1}{\sum_{j=1}^d r_j}\sum_{\ell =1}^d r_\ell a^\ell_i $$
    We can conclude the 1-Lipschitzness by looking at the norm of the gradient:
\begin{align*}
\|\nabla \smax{\mu}(Ax)&\|_\ast = \sup_{\norm{h}\leq 1} \langle \nabla \smax{\mu}(Ax), h\rangle \\
&= \frac{1}{\sum_{j=1}^d r_j}\sup_{\norm{h}\leq 1} \left\vert \sum_{i=1}^d\sum_{\ell=1}^d r_\ell  a^\ell_i h_i\right\vert \\ 
&\leq \frac{1}{\sum_{j=1}^d r_j} \sum_{\ell=1}^d r_\ell \sup_{\norm{h}\leq 1} \norm{a^\ell}_\ast \norm{h} \leq 1.
\end{align*}
$(b)$ Here we generalize the ideas in \citep[Theorem 5]{bullins2020highly}. Let $f(x) = \mu\log(x)$ and $Z_\mu(x) = \sum_{j=1}^d r_j$. Then, $\smax{\mu}(x) = f(Z_\mu(Ax))$. Moreover, it is easy to check that for every $k\geq 1$
 $$f^{(k)}(x) = \mu\frac{(-1)^{k-1}(k-1)!}{x^k}, \qquad \nabla^k Z_\mu(Ax)[h_1,\dots,h_{k}] = \frac{1}{\mu^k}\sum_{j=1}^d r_j \prod_{\ell = 1}^{k} \langle a^j,h_\ell\rangle.$$
Fix unitary vectors $h_1,\dots,h_{\q+1}\in \Rd$. For any subset $B=\{i_1,\dots,i_{|B|}\}\subseteq [\q+1]$, let us denote $\mathbf{h}_B = [h_{i_1},\dots,h_{i_{|B|}}]$. Then, because of the chain rule and Faà di Bruno's formula we have
\begin{align*}
    |\nabla^{(\q+1)}\smax{\mu}(x)[\mathbf{h}_{[\q+1]}]| &= \left\vert \sum_{\pi\in \Pi_{(\q+1)}}f^{|\pi|}(Z_\mu(Ax))\cdot \prod_{B\in \pi}\nabla^{|B|}Z_\mu(Ax)[\mathbf{h}_B]\right\vert \\
    &= \left\vert \sum_{\pi\in \Pi_{(\q+1)}} \mu\frac{(-1)^{|\pi|-1}(|\pi|-1)!}{Z_\mu(Ax)^{|\pi|}} \cdot \prod_{B\in \pi}\frac{1}{\mu^{|B|}}\sum_{j=1}^d r_j \prod_{\ell\in B} \langle a^j,h_\ell\rangle\right\vert \\
    &\leq  \sum_{\pi\in \Pi_{(\q+1)}} \left\vert\mu\frac{(-1)^{|\pi|-1}(|\pi|-1)!}{Z_\mu(Ax)^{|\pi|}}\right\vert \cdot \prod_{B\in \pi}\frac{1}{\mu^{|B|}}\sum_{j=1}^d r_j \prod_{\ell\in B}\norm{a^j}_\ast \norm{h_\ell} \\
    &\leq  \sum_{\pi\in \Pi_{(\q+1)}} \mu\frac{(|\pi|-1)!}{Z_\mu(Ax)^{|\pi|}} \cdot \prod_{B\in \pi}\frac{1}{\mu^{|B|}}Z_\mu(Ax) \\
    &=  \sum_{\pi\in \Pi_{(\q+1)}} \mu\frac{(|\pi|-1)!}{\cancel{Z_\mu(Ax)^{|\pi|}}} \cdot \frac{1}{\mu^{(\q+1)}}\cancel{Z_\mu(Ax)^{|\pi|}} \norm{h}^{(\q+1)} \\
    &=  \sum_{\pi\in \Pi_(\q+1)} \mu^{-\q}(|\pi|-1)!\norm{h}^{(\q+1)} \\
    &\leq \left\vert \Pi_{(\q+1)}\right\vert \mu^{-\q}\q!\norm{h}^{(\q+1)}.
\end{align*}
Therefore, $\norm{\nabla^{(\q+1)}\smax{\mu}(x)}_\ast \leq L_{\q}\:=\left\vert \Pi_{(\q+1)}\right\vert \mu^{-\q}\q!$. In particular, $\left\vert \Pi_{(\q+1)}\right\vert$ is the $(\q+1)$-th Bell number that can be bounded as $\left\vert \Pi_{(\q+1)}\right\vert\leq \left(\frac{\q+1}{\ln(\q+2)}\right)^{(\q+1)}$. Finally, the Lipschitzness of $\nabla^{\q}\smax{\mu}(x)$ comes from a standard mean value argument. \\
$(c)$ We now generalize the result in \citep[Lemma 3]{garg2021nearoptimal}.  Let $c = \frac{\sum_{j=n+1}^d  r_j}{\sum_{j = 1}^n r_j}$. We have
\begin{align*}
   \norm{\nabla & \smax{\mu}(Ax) - \nabla \smaxn{n}{\mu}(Ax)}_\ast \\
    &= \sup_{\norm{h}\leq 1}\frac{1}{\sum_{j=1}^d r_j}\left\vert \sum_{\ell = 1}^d r_\ell\langle a^\ell,h\rangle - \frac{1}{\sum_{j=1}^n r_j} \sum_{\ell = 1}^n r_\ell\langle a^\ell,h\rangle \right\vert\\
    &= \sup_{\norm{h}\leq 1}\frac{1}{\sum_{j=1}^d r_j}\left\vert \sum_{\ell = 1}^d r_\ell\langle a^\ell,h\rangle - \frac{1+c}{\sum_{j=1}^d r_j} \sum_{\ell = 1}^n r_\ell\langle a^\ell,h\rangle \right\vert\\
    &= \sup_{\norm{h}\leq 1}\frac{1}{\sum_{j=1}^d r_j} \left\vert\sum_{\ell = n+1}^d r_\ell\langle a^\ell,h\rangle -  c\sum_{\ell = 1}^n r_\ell\langle a^\ell,h\rangle \right\vert\\
    &\leq \sup_{\norm{h}\leq 1}\frac{1}{\sum_{j=1}^d r_j} \sum_{\ell = n+1}^d r_\ell\norm{a^\ell}_\ast\norm{h} +  c\sum_{\ell = 1}^n r_\ell\norm{a^\ell}_\ast\norm{h} \\
    & \leq \frac{1}{\sum_{j=1}^d r_j}\left(\sum_{\ell=n+1}^d r_\ell + c\sum_{\ell=1}^n r_\ell\right) = \frac{2\sum_{\ell=n+1}^d r_\ell}{\sum_{j=1}^d r_j}.
\end{align*}
On the other hand, $\smax{\mu}(Ax) - \smaxn{n}{\mu}(Ax) = \delta$ implies 
$$\delta = \ln\left(
\frac{\sum_{j=1}^d r_j}
{\sum_{j=1}^n r_j}
\right) 
= \ln\left(1 + \frac{\sum_{j=n+1}^d r_j}{\sum_{j=1}^n r_j}\right)=\ln(1+c)\geq \frac{c}{2}.$$
Hence, $\sum_{j=n+1}^d r_j \leq 2\delta\sum_{j=1}^d r_j$ and the conclusion follows.

\end{proof}

\begin{proof}\linkofproof{lemma:Lq}
    Each $f_i$ is an instance of partial softmax composed with a linear map and a translation. Therefore, the high-order Lipschitzness and convexity properties of $\smax{\mu}$ in \cref{lemma:softmax} also apply to $f_i$, and thus $f_i$ is convex, $\q$-times differentiable with $\bigop{\q}{\mu^{-\q}}$-Lipschitz $\q$-th derivatives. The function $h$ is also convex, since it is a maximum of convex functions. Because of \cref{lemma:randomized_smoothing}.\ref{item:cvxty_randomized_smoothing} the function $g$ is also convex.

    Let $x \in \Rd$. Let $j \in[T]$ be the minimum number such that there is a point $\omega \in B^{\norm{\cdot}}_\beta(x)$ for which $h(\omega)=f_j(\omega)$. For every $z\in B^{\norm{\cdot}}_\beta(x)$, $h(z)=f_j(z)+\max _{i\geq j}\left\{f_i(z)-f_j(z)\right\}$. The term $f_j(z)$ is smooth in the ball whereas the term $\max _{i\geq j}\left\{f_i(z)-f_j(z)\right\}$ may not be smooth. If all points $z \in B^{\norm{\cdot}}_\beta(x)$ satisfy $h(z)=f_j(z)$, then the nonsmooth term is $0$ and so $h$ is as smooth as $f_j$, and the $i$-th derivative of $g=\S{\q}{\beta}[h]$ enjoys the same Lipschitzness as the $i$-th derivative of $f_j$ by \cref{lemma:randomized_smoothing}.

    We show that the nonsmooth term has a small Lipschitz constant in $B^{\norm{\cdot}}_\beta(x)$, which will be later used in conjunction with \cref{lemma:randomized_smoothing} to conclude. We can now assume that the non-smooth term is nonzero at some point in $B^{\norm{\cdot}}_\beta(x)$. %
    Towards this let $x'\in B^{\norm{\cdot}}_\beta(x)$, and $I(x')=\left\{i \in [T] \mid h\left(x'\right)=f_i\left(x'\right)\right\}$. The set of subgradients of the nonsmooth term at $x'$ is the convex hull of $\left\{\nabla\left(f_i-f_j\right)\left(x'\right)\right\}_{i \in I\left(x'\right)}$. So if we show that for an arbitrary $i \in I\left(x'\right),\norm{\nabla\left(f_i-f_j\right)\left(x'\right)}_{\ast} \leq \L$, then we know that the nonsmooth part is $\L$-Lipschitz at $x'$. If $i=j$, then the gradient is zero. Let us take an $i \neq j$ (since $j$ is the smallest, in fact $i>j$). 
    By convexity of the ball and the continuity of $f_i$ and $f_j$, there must be a point $y$ in $B^{\norm{\cdot}}_\beta(x)$ for which $h(y)=f_i(y)=f_j(y)$. Note that $x' \in B^{\norm{\cdot}}_{2 \beta}(y)$.
    The statement $f_i(y)=f_j(y)$ translates to $\circled{1}$ below
\begin{align*}
     \begin{aligned}
         (i-j) d^{-\alpha} &\circled{1}[=] \frac{\smaxn{i}{\mu}((\langle z_\ell, y\rangle + (T-\ell)\gamma)_{\ell\in [d]})-\smaxn{j}{\mu}((\langle z_\ell, y\rangle + (T-\ell)\gamma)_{\ell\in [d]})}{\mu} \\
         & = \ln \left(\frac{\sum_{\ell=1}^i \exp \left(\frac{\langle z_\ell, y\rangle + (T-\ell) \gamma}{\mu}\right)}{\sum_{\ell=1}^j \exp \left(\frac{\langle z_\ell, y\rangle +(T-\ell) \gamma}{\mu}\right)}\right)  =\ln \left(1+\frac{\sum_{\ell=j+1}^i \exp \left(\frac{\langle z_\ell, y\rangle+(T-\ell) \gamma}{\mu}\right)}{\sum_{\ell=1}^j \exp \left(\frac{ \langle z_\ell, y\rangle+(T-\ell) \gamma}{\mu}\right)}\right) \\
         &\circled{2}[\geq] e^{-4\beta/\mu}\ln \left(1+\frac{e^{2 \beta / \mu} \sum_{\ell=j+1}^i \exp \left(\frac{\langle z_\ell, y\rangle+(T-\ell) \gamma}{\mu}\right)}{e^{-2 \beta / \mu} \sum_{\ell=1}^j \exp \left(\frac{\langle z_\ell, y\rangle+(T-\ell) \gamma}{\mu}\right)}\right) \\
         & \circled{3}[\geq] e^{-4\beta/\mu}\frac{\smaxn{i}{\mu}((\langle z_\ell, x'\rangle + (T-\ell)\gamma)_{\ell\in [d]})-\smaxn{j}{\mu}\left((\langle z_\ell, x'\rangle + (T-\ell)\gamma)_{\ell\in [d]}\right)}{\mu}\\
         & \circled{4}[=] e^{-4\beta/\mu}\left(f_i(x') -f_j(x') + (i-j)d^{-\alpha} \right),
     \end{aligned}
\end{align*}
    where  $\circled{2}$ holds since for all $c>0$, it is $\ln(1+c) \geq e^{-4\beta/\mu}\ln(1+e^{4\beta/\mu}c)$ and $\circled{3}$ is due to $\abs{x'_\ell-y_\ell} \leq 2 \beta$ which for any $\ell$ is implied by the fact that $\norm{x'-y}_{\p} \leq 2 \beta$. Finally $\circled{4}$ holds by the definition of $f_i$ and $f_j$. Therefore, by \cref{lemma:softmax} (c) 
    $$\norm{\nabla\left(f_i-f_j\right)\left(x'\right)}_{\ast} \leq 4(i- j) d^{-\alpha} e^{4 \beta / \mu} \leq 4T d^{-\alpha} e^{4 \beta / \mu}.$$ 
     
     The $\q$-th derivatives of $g=\S{\q}{\beta}[h]=\S{\q}{\beta}\left[f_j\right]+\S{\q}{\beta}\left[\max _{i\geq j}\left\{f_i-f_j\right\}\right]$ are thus the sum of two Lipschitz functions with constants $\bigop{\q}{\mu^{-\q}}$ and $\bigop{\q}{\beta^{-\q} T d^{\q-\alpha} \exp (4 \beta / \mu)}$ respectively, where the last is a consequence of \cref{lemma:randomized_smoothing}. Finally, we use the values of the parameters $\gamma = \frac{\Theta}{4T}$, $\mu = \frac{\gamma}{4\alpha\ln d}$, and $\beta = \frac{\gamma}{\ln d}$ to bound both quantities:
     $$\mu^{\q} = \left(\frac{\gamma}{4\alpha \ln d}\right)^{-\q} = \left(\frac{\Theta}{16T\alpha \ln d}\right)^{-\q} \leq\bigopl{\q}{\left(\frac{T\ln d}{\Theta}\right)^{\q}},$$
     and 
     $$\beta^{-\q} T d^{\q-\alpha} \exp (4 \beta / \mu) = \left(\frac{\gamma}{\ln d}\right)^{-\q} \frac{T}{d^{\alpha-\q}}\exp(16\alpha) \leq \bigopl{\q}{\left(\frac{T\ln d}{\Theta}\right)^{\q}},$$
     where the last inequality holds because $\alpha\ge q+1$ and $T\le d$. 
\end{proof}

\begin{proof}\linkofproof{thm:lower-bound}
We start by estimating the optimality gap of the function $g$.  We start by establishing an upper bound for $\inf_{x\in \mathcal{X}}g(x)$. Initially, we assume that \( \mathcal{X} \subseteq \mathbb{R}^d \) is a closed convex set containing the unit ball \( B^{\norm{\cdot}} \) of \( (\mathbb{R}^d, \norm{\cdot}) \). 

For every $i\in [T]$ we have the upper bound
\begin{align}
          f_i(x) &\leq \min_{x\in \mathcal{X}} \mu \ln\left(\sum_{j=1}^i \exp\left(\frac{\langle z_j,x\rangle + T\gamma}{\mu}\right)\right) + \mu (T+1) d^{-\alpha} \nonumber \\
         &\leq \mu \ln\left(T\exp\left(\frac{\max_{j\in [T]}\langle z_j,x\rangle + T\gamma}{\mu}\right)\right) + \mu (T+1) d^{-\alpha}  \nonumber \\
         &\leq \mu\ln T + \max_{j\in [T]}\langle z_j,x\rangle + T\gamma + \mu (T+1) d^{-\alpha} \label{eq:bound_fi},
\end{align}

Therefore $h(x) \leq \mu\ln T + \max_{j\in [T]}\langle z_j,x\rangle + T\gamma + \mu (T+1) d^{-\alpha}$ for every $x\in \mathcal{X}$. Moreover, using the properties of the randomized smoothing we have that for every $x$, $g(x)\leq h(x)+2\beta$. In particular, 
$$\inf_{x\in \mathcal{X}}g(x) \leq \inf_{x\in \mathcal{X}}h(x)+2\beta \leq  \mu\ln T -\Theta + T\gamma + \mu (T+1) d^{-\alpha} + 2\beta,$$
where we have used the hypothesis $(ii)$ of \cref{thm:lower-bound}. 

Now, we compute a lower bound for the algorithm's output. For this, we consider the construction of the hard instance functions $g$ with vectors of the form $z_i = \xi_iv_i$ where $\xi\in \{-1,1\}^{d}$ is a vector of signs and $\{v_i\}_{i\in [d]}$ are orthogonal vectors in $\Rd$. Given an algorithm $\mathcal{A}$ interacting with a local oracle $\mathcal{O}$, denote $x_0, x_1, \dots, x_{T-1}$ the first $T$ query points. The key of the construction is to choice $\xi_i$ such that they only depend on $\{x_0, \dots, x_i\}$. In particular, our sign choices are based on  inductively defined sets $I_i=\{i_j\}_{j=0}^{i}\subseteq [d]$, as follows. First, $I_{-1}=\emptyset$, and given $I_{i-1}$, let $I_i=I_{i-1}\cup\{\sigma(i)\}$, where $\sigma(i) \in \argmax_{j\in[d]\setminus I_i} \abs{\langle v_j, x_{i}\rangle}$, and we let $\xi_i = \sign(\langle v_{\sigma(i)},x_{i}\rangle)$. Hence for every $t\in [T]$
$$g(x_t) \circled{1}[\ge] h(x_t) - 2\beta \geq f_t(x_t)-2\beta \circled{2}[\ge] \xi_t\langle v_{\sigma(t)},x_t\rangle -2\beta \circled{3}[\ge] -2\beta.$$
Here, $\circled{1}$ uses the properties of the randomized smoothing, in $\circled{2}$ we drop every term in the softmax except the last one, and $\circled{3}$ is because of the choice of $\xi_t$. 

    Function $g$ is $\q$-th order smooth with constant $L_{\q}\leq \bigotildep{\q}{(T/\Theta)^{\q}}$. By rescaling, we can construct the function $F = (\L/L_{\q}) g$ that is $\q$-th order smooth with constant $\L$ and the optimality gap for every $t\in [T]$ is
$$F(x_t) - \inf_{x\in \mathcal{X}}F(x) \geq \frac{\L}{L_{\q}}\left(-\mu\ln T +\Theta - T\gamma - \mu (T+1) d^{-\alpha}-4\beta\right) \geq \bigomegatildelp{q}{\L\frac{\Theta^{\q+1}}{T^{\q}(\ln d)^{\q}}}.$$
It remains to prove that for any $y_t \in \ballnorm[\beta](x_t)$, we have that $h(y_t)$ does not depend on $\xi_i$, for $i>t$. That is, $f_{t+1}(y_t) \geq f_{i+1}(y_t)$. Assume for simplicity from now on by relabeling the coordinates without loss of generality that the index at the $\ell$-th step is $\ell$, that is $i_{\ell-1} = \ell$. Inequality $f_{t+1}(y_t) \geq f_{i+1}(y_t)$ holds if the following expression is $\leq (i-t)d^{-\alpha}$
\begin{align*}
     \begin{aligned}
         \ln\left( \frac{\sum_{j=1}^{i+1} \exp(\frac{\xi_j\langle v_j,  y_{t}\rangle +  (T-j)\gamma}{\mu})}{\sum_{j=1}^{t+1} \exp(\frac{\xi_j\langle v_j,  y_{t}\rangle +  (T-j)\gamma}{\mu})} \right) & \circled{1}[\leq]  \frac{\sum_{j=t+2}^{i+1} \exp(\frac{\xi_j\langle v_j,  y_{t}\rangle +  (T-j)\gamma}{\mu})}{\sum_{j=1}^{t+1} \exp(\frac{\xi_j\langle v_j,  y_{t}\rangle +  (T-j)\gamma}{\mu})} \\
         & \circled{2}[\leq] \frac{T{\displaystyle\max_{t+2 \leq j \leq i+1}}\exp(\frac{\xi_j\langle v_j,  y_{t}\rangle +  (T-j)\gamma}{\mu})}{\exp(\frac{\xi_{t+1}\langle v_{t+1},  y_{t}\rangle +  (T-t-1)\gamma}{\mu})}
     \end{aligned}
\end{align*}
where $\circled{1}$ uses $\ln(1+c) \leq c$, while in $\circled{2}$ we drop all summands in the denominator but the last one and bounded the sum by a $\max$ and we bound $j$ by $t+2$ in the $\exp$ in the numerator.
It suffices to prove that the right hand side is upper bounded by $d^{-\alpha} \leq (i-t)d^{-\alpha}$. Equivalently, it suffices to show
\[
    \mu \ln T +  \max_{t+2 \leq j \leq i+1} \xi_j\langle v_j,  y_{t}\rangle - \xi_{t+1}\langle v_{t+1},  y_{t}\rangle + \gamma \leq -\mu\alpha \ln d.
 \] 
By the definition of $i_t = t+1$, we have $\xi_{i+1}\langle v_{i+1}, x_{t}\rangle - \xi_{t+1} \langle v_t, x_{t} \rangle \leq 0$ for any $i > t$. Thus, we have $\xi_{i+1}\langle v_{i+1}, y_{t}\rangle - \xi_{t+1} \langle v_t,y_{t} \rangle \leq 2\beta$. So it suffices that
\[
    \mu (\ln T + \alpha \ln d) + 2\beta \leq  \gamma,
\] 
which holds by construction.

\paragraph{Extension to $R$-balls} \label{remark:ballR}
To extend the results for a set containing a ball of radius $R>0$, it is enough to use the construction above with the function $\hat F(x)\defi R^{\q+1}F(x/R)$ acting over the set $\hat{\mathcal{X}}\defi R\mathcal{X}$. Clearly, 
if $B^{\norm{\cdot}} \subseteq \mathcal{X}$, then $B^{\norm{\cdot}}_R \subseteq \hat{\mathcal{X}}$. Moreover, using the chain rule and the fact that $F$ is $\q$-th order $\L$-Lipschitz, it is easy to verify that $\hat{F}$ is also $\q$-th order $\L$-Lipschitz, and for every $t\in [T]$
$$\hat{F}(Rx_t)-\inf_{x\in \hat{\mathcal{X}}}\hat{F}(x) = R^{q+1}\left(F(x_t)-\inf_{x\in \mathcal{X}}F(x)\right) \geq \bigomegatildelp{q}{\L R^{\q+1}\frac{\Theta^{\q+1}}{T^{\q}(\ln d)^{\q}}}.$$
Moreover, by applying a simple translation, \(\mathcal{X}\) can be centered at the origin. This enables us to shift and scale any full-dimensional convex body to ensure it encloses \(B^{\|\cdot\|}\).

\paragraph{Extension to H\"older continuous functions}

Now we extend the result to H\"older continuous functions. Let $g$ constructed as in the past sections. We have that $g$ is $q$-times differentiable and its derivatives are $L_q$ Lipschitz from \cref{lemma:Lq}, i.e.
$$\norm{\nabla^{\q} g(x)-\nabla^{\q} g(y)}_{\ast} \leq L_{\q}\norm{x-y}.$$
Moreover, since $\nabla^{\q-1}g(x)$ is $L_{\q-1}$-Lipschitz, a standard mean value argument implies that the $q-$th order derivatives are bounded, i.e.
$$\norm{\nabla^{q} g(x)}_{\ast} \leq L_{\q-1} \quad \mbox{for every $x\in \mathcal{X}$}.$$ 
Hence for every $\nu\in (0,1]$
$$\norm{\nabla^{\q} g(x) - \nabla^{\q} g(y)}_{\ast} \leq (2L_{\q-1})^{1-\nu}L_{\q}^{\nu}\norm{x-y}_{\p}^{\nu}.$$
Therefore, $g$ is $(H_{\nu,\q},\nu)$-H\"older continuous with $H_{\q,\nu} \defi (2L_{\q-1})^{1-\nu}L_{\q}^{\nu}$. 

It follows from \cref{lemma:Lq}  that 
$$H_{\nu,\q} =  \bigotildep{\q}{\left(T/\Theta\right)^{(\q-1)(1-\nu)}\left(T/\Theta\right)^{\q\nu}} = \bigotildep{\q}{\left(T/\Theta\right)^{\q+\nu-1}}.$$

Given $H>0$, the rescaled function $F(x) = \frac{H}{H_{\nu,\q}}g(x)$ is $(H,\nu)$-H\"older continuous. Furthermore, we can extend the result to a set containing a $R$-ball by considering the function $\hat{F}(x) = R^{\q+\nu}F(x/R)$, which is also $(H,\nu)$-H\"older continuous leading to the optimality gap $\bigomegatildelp{\q}{HR^{\q+\nu}\Theta/H_{\nu,\q}} = \bigomegatildelp{\q}{HR^{\q+\nu}\Theta^{\q+\nu}/T^{\q+\nu-1}}$.

In particular,  recalling the specific value of $\Theta$ for $\p$-norms, i.e., $\Theta = T^{-1/\p}$ for $\p\geq 2$, $\Theta = 1$ for $p=\infty$, and $\Theta=T^{-1/2}$ for $1\leq \p<2$, we have that the number of iterations needed to reach the precision $\epsilon$ is at least 
$$\bigomegatildelp{\q,\p}{\left(\frac{HR^{\q+\nu}}{\epsilon}\right)^{\frac{\m}{(\m+1)(\q+\nu)-\m}}},$$
where $\m\defi \max\{2,\p\}$. For $\p=\infty$, we have the rate $\bigomegatildelp{\q,\p}{\left(\frac{HR^{\q+\nu}}{\epsilon}\right)^{\frac{1}{\q+\nu-1}}}$.

\end{proof}

\subsection{The case of \texorpdfstring{$p$}{p}-norms} 

In this section, we specialize \cref{thm:lower-bound} for the case of the $\p$-norms, following classical constructions of orthonormal bases from \cite{Nemirovski:1983} that we include for self-containedness. To this, we separate the cases $p\geq 2$ and $1\leq p\leq 2$. In particular, for $p\ge 2$ we prove that if $d\geq \bigomega{T^{1+1/p}}$ then we can take $\Theta = T^{-1/p}$. On the other hand, when $1\le p < 2$ we can take $\Theta = T^{-1/2}$ provided that $d\geq \bigomega{T^{3/2}}$. 

\begin{itemize}
    \item For $\p\geq 2$ we use $z_i = \xi_ie_i$ where $\xi\in \{-1,1\}^d$ is a vector of signs and $e_i$ is the $i$-th canonical vector. It is easy to check that
        $$\min_{x\in \mathcal{X}}\max_{i\in [T]}\langle z_i,x\rangle \leq \min_{\norm{x}_p \leq 1}\max_{i\in [T]}\xi_i\langle e_i,x\rangle \leq -T^{-1/\p}.$$
    Replacing $\Theta = T^{-1/\p}$ the optimality gap for $\p\geq 2$ is
    $$F(x_T) - \inf_{x\in \mathcal{X}}F(x) \geq \bigomegatildelp{q}{\L\frac{\Theta^{\q+1}}{T^{\q}(\ln d)^{\q}}} = \bigomegatildelp{\q,\p}{LT^{-\frac{p\q+\q+1}{\p}}},$$
    so at least $\bigomegatildelp{\q,\p}{\left(\frac{\L}{\epsilon}\right)^{\frac{\p}{p\q+\q+1}}}$ iterations are needed to reach the precision $\epsilon$. 

    Similarly, replacing $\Theta = 1$ for $\p=\infty$ we obtain the rate $\bigomegatildelp{\q,\p}{\left(\frac{\L}{\epsilon}\right)^{\frac{1}{\q}}}$. 

\item For $1\leq \p < 2$ we use a different construction. Assume that $d=2^{\bar s}$ with $\bar s\in \NN$ such that $2^{\bar s-1}<8T^{3/2}\leq 2^{\bar s}$, and consider the Hadamard base $\{\hat e_1,\ldots,\hat e_d\}$ formed by the columns of the matrix $H_{d}$ that is constructed recursively as $H_1 = H_{2^0}=[1]$, and
\[ H_{2^{s+1}} = \frac{1}{\sqrt 2} \left[ 
\begin{array}{cc}
H_{2^s} & H_{2^s}\\
H_{2^s} & - H_{2^s}
\end{array}
\right].
\]

It is easy to see that
\[ \|\hat e_j\|_2=1, \qquad \|\hat e_j\|_{\infty}=1/\sqrt{d}. \]
Using interpolation inequalities for $\p$ norms, we have that for all $j\in[d]$,
\[ \|\hat e_j\|_{\dualnumber{p}}\leq \|\hat e_j\|_2^{\frac{2}{\dualnumber{p}}}\|\hat e_j\|_{\infty}^{1-\frac{2}{\dualnumber{p}}}=d^{-\frac12(1-\frac{2}{\dualnumber{p}})}= d^{\frac{1}{\dualnumber{p}}-\frac12}. \]
In particular, we have that $\{v_j\}_{j\in[d]}$, where $v_j=d^{1/2-1/\dualnumber{p}}\hat e_j$, is such that these vectors are orthogonal and have unit $\ell_{\dualnumber{p}}$-norm.

Using minimax duality, for every $\xi\in\{-1,+1\}^T$
$$\min_{x\in{\cal X}}\max_{j\in[T]} \xi_j\langle v_j,x\rangle  \leq \min_{\norm{x}\leq 1}\max_{\lambda\in\Delta_T} \sum_{j\in[T]}\lambda_j\langle v_j,x\rangle 
    = -\min_{\lambda\in\Delta_T}\Big\|\sum_{j\in[T]}\lambda_j \xi_j v_j\Big\|_{\dualnumber{p}}.$$

In order to estimate this quantity. Consider first the $\|\cdot\|_2$:
\[ \Big\|\sum_{j\in[T]}\lambda_j \xi_j v_j\Big\|_2=\sqrt{\sum_{j\in[T]}\lambda_j^2 \|v_j\|_2^2}= T^{\frac12-\frac{1}{\dualnumber{p}}}\frac{1}{\sqrt{T}}=T^{-1/\dualnumber{p}}.
\]
Now, using H\"older's inequality:
\[ T^{-\frac{1}{\dualnumber{p}}}=
\Big\|\sum_{j\in[T]}\lambda_j \xi_j v_j\Big\|_{2}
\leq T^{\frac{1}{2}-\frac{1}{\dualnumber{p}}} \Big\|\sum_{j\in[T]}\lambda_j \xi_j v_j\Big\|_{\dualnumber{p}},  \]
hence $\min_{x\in{\cal X}}\max_{j\in[T]} \xi_j\langle v_j,x\rangle \leq-\frac{1}{\sqrt T}$, and $\Theta = \frac{1}{\sqrt{T}}$. The optimality gap is then
$$F(x_T) - \inf_{x\in \mathcal{X}}F(x) \geq \bigomegatildelp{q}{\L\frac{\Theta^{\q+1}}{T^{\q}(\ln d)^{\q}}} = \bigomegatildelp{\q,\p}{LT^{-\frac{3\q+1}{2}}},$$
so at least $\bigomegatildelp{\q,\p}{\left(\frac{\L}{\epsilon}\right)^{\frac{2}{3\q+1}}}$ iterations are needed to reach the precision $\epsilon$. 
\end{itemize}

\subsection{Randomized and parallel methods} \label{sec:lower_bounds_random}

In this section we prove the result in \cref{thm:lower-bound_random} for possibly randomized and parallel algorithms that interact with a local oracle. 

In the \(K\)-parallel framework for convex optimization \citep{nemirovski1994parallel}, algorithms operate iteratively across multiple rounds. During each round, the algorithm issues a batch of queries denoted by \(X_t = \{x_{t,1}, \ldots, x_{t,K}\}\). In response, the local oracle \(\mathcal{O}\) provides a batch of outputs, represented as \(\mathcal{O}_F(X_t) = (\mathcal{O}_F(x_{t,1}), \ldots, \mathcal{O}_F(x_{t,K}))\). The algorithm's behavior may adapt over successive rounds, with each new batch of queries depending on prior queries and the corresponding oracle responses:
\[
X_{t+1} = \Psi_{t+1}(X_1, \mathcal{O}_F(X_1), \ldots, X_t, \mathcal{O}_F(X_t)), \quad \forall t \geq 1,
\]
where \(\Psi_{t+1}\) defines the update (possible randomized) mechanism for generating the next batch of queries. Notably, setting \(K = 1\) recovers the standard definition of sequential oracle complexity. The following theorem is the extension of \cref{thm:lower-bound} for $K$-parallel randomized algorithms, which we prove in \cref{sec:lower_bounds_random}. 

\begin{theorem}[Lower bound for parallel randomized algorithms] \label{thm:lower-bound_random}\linktoproof{thm:lower-bound_random}
    Let $\norm{\cdot}$ a norm in $\Rd$ and $\mathcal{X}$ a closed convex set containing the $R$-ball $B_R^{\norm{\cdot}}$ of $(\Rd,\norm{\cdot})$ for some $R>0$. Let $T$ a positive integer,  $\Theta,\; \tilde{M} >0$ real numbers, 
  $0<\eta<1/2$ a probability, and $\{z_i\}_{i\in [T]}$ independent random vectors in $\Rd$ such that: \\
    $(i)\;$ $\norm{z_i}_\ast\leq 1$ for every $i\in [T]$, \\
    $(ii)\; $ $\mathbb{P}[\min_{x\in \mathcal{X}}\max_{i\in [T]} \langle z_i,x\rangle\leq -\Theta]\geq 1-\eta,$ \\
    $(iii) \;$  For every $i\in [T]$, $x\in \mathcal{X}$ and $\delta>0$, $\max\{\mathbb{P}[\langle z_i,x\rangle\geq \delta],\mathbb{P}[\langle z_i,x\rangle\leq -\delta]\}\leq \exp(-\tilde{M}\delta^2)$ \\
    $(iv) \;$ $\Theta \geq 64T\sqrt{\ln (TK/\eta)/\tilde{M}}$.
        
    Then, for every $\L>0$, $\nu\in (0,1]$, there exists a family of $\q$-th order $(\L,\nu)$-H\"older continuous functions $\mathcal{F}$ such that for any $K$-parallel algorithm $\mathcal{A}_K$ interacting with a local oracle $\mathcal{O}$ it holds
    $$\mathbb{P}_{F\sim \Delta(\mathcal{F})}\left[\min_{t\in [T],k\in [K]}F(x_{t,k}) - \min_{x\in \mathcal{X}}F(x)\geq \bigomegatildelp{\q}{\L R^{q+\nu}\frac{\Theta^{\q+\nu}}{T^{\q+\nu-1}}}\right] \geq 1-2\eta,$$
    where $\{x_{t,k}\}_{t\in [T],k\in[K]}$ is the sequence generated by the pair $(\mathcal{A}_K, \mathcal{O})$.
\end{theorem}

\begin{proof}\linkofproof{thm:lower-bound_random}
The lower bound for randomized algorithms relies on two properties. First, an upper bound on the minimal value of $F$ that holds with high probability, and second, a lower bound on the function value of the algorithm's output that also holds with high probability. Let us start by considering a set $\mathcal{X}$ containing the unit ball, and recall the construction of the hard instance function

For $i=1,\ldots,T$ define the functions $f_i:\Rd\mapsto \R$ we define
$$
    f_i(x) \defi \smaxn{i}{\mu}((\langle z_j, x\rangle + (T-j)\gamma)_{j\in[d]}) + \mu(T+1-i)d^{-\alpha},
$$
and 
$$
    h(x) \defi \max_{i\in[T]}f_i(x), \qquad g(x)\defi\S{\q}{\beta}[h](x).
$$
    Here, $z_i$ are random vectors as described in the statement of \cref{thm:lower-bound_random}, and the parameters are chosen as before 
$$
    \gamma = \frac{\Theta}{4T}, \quad \mu = \frac{\gamma}{4\alpha\ln d}, \quad \beta = \frac{\gamma}{\ln d}, \quad \alpha \geq \q+1.
$$
\cref{lemma:Lq} implies that $g$ is $\q$-th order smooth with constant $L_{\q}$. Moreover, as in \cref{eq:bound_fi} we can upper bound the minimal value of $g$ over $\mathcal{X}$ as follows:
\begin{align*}
    \min_{x\in \mathcal{X}} g(x) & \leq \min_{x\in \mathcal{X}} h(x) + 2\beta \\
    & \leq \mu \ln T + \min_{x\in \mathcal{X}}\max_{i\in [T]} \langle z_i,x\rangle+T\gamma+\mu(T+1)d^{-\alpha}+2\beta\\
    & \leq \mu \ln T - \Theta +T\gamma+\mu(T+1)d^{-\alpha}+2\beta
\end{align*}
To lower bound $g(x_T)$ the key idea is to show that, at each round $t$, w.h.p., the algorithm can only access information about $z_1, \ldots, z_t$ and has no knowledge of $z_{t+1}, \ldots, z_k$. We denote the history of the algorithm-oracle interaction until iteration $t-1$ as $\Pi^{t} = (X_s, \mathcal{O}(X_s))_{s<t}$. We also define the following events
\[
\mathcal{E}^t(x) \defi \left\{\langle z_i, x \rangle > -\frac{\gamma}{4} \right\} \cap \left\{\langle z_i, x \rangle < \frac{\gamma}{4} \, (\forall i > t) \right\}, \quad \mbox{and} \quad \mathcal{E}^{<t} \defi \bigcap_{s<t, k\in [K]} \{\mathcal{E}^s(x_{s,k})\},
\]
where $\gamma>0$ is a parameter to be determined and $\mathcal{E}^{<1}$ is such that $\mathbb{P}[\mathcal{E}^{<1}]=1$. By \cref{fact:rlb1}, we have in particular that w.p. at least $1-\eta$ for every $t\in T$
$$
    g(x_t)\geq h(x_t)-2\beta \geq f_t(x_t)-2\beta \geq \langle z_t,x_t\rangle-2\beta \geq -\frac{\gamma}{4}-2\beta.
$$
Putting the results together, rescaling $g$ by $F=\L/L_{\q}g$, and using the value of the parameters we obtain the result.
\end{proof}

Under the assumptions of \cref{thm:lower-bound_random}, the following fact\footnote{In \citep{diakonikolas2020lower}, this claim mentions the predictability of $X_t$ with respect to $\{z_s\}_{s<t}$, conditionally on $\mathcal{E}^{<t}$. This claim is incorrect, as the algorithm is randomized. Instead, the correct affirmation is that $X^t$ is conditionally independent, which suffices for the high-probability conclusion.}
directly follows from \citep{diakonikolas2020lower}, which we used in the proof of \cref{thm:lower-bound_random}. 

\begin{fact} \label{fact:rlb1}
    Let $t<T$ and assume that event $\mathcal{E}^{t}$ holds. Then, for all $k\in [K]$ and $x\in B^{\norm{\cdot}}_r(x_{t,k})$, $g(x)$ is fully determined by vectors $z_s$ with $s\leq t$. Moreover,
    $X_t$ is independent of $\{z_s\}_{s\geq t}$, conditionally on $\mathcal{E}^{<t}$, and
    $\mathbb{P}\left[\bigcap_{t\in [T]}\mathcal{E}^{t}\right]\geq 1-\eta$. 
\end{fact}

\subsection{The case of \texorpdfstring{$p$}{p}-norms in randomized and parallel methods} \label{sec:p_norms_randomized_parallel}

To specialize the result for $p$-norms we need to estimate the value of $\Theta$ in $(ii)$ of \cref{thm:lower-bound_random}. We separate the cases $p\geq 2$ and $1\leq p < 2$.

\begin{itemize}
    \item For $\p\geq 2$, the construction is as follows. Let $\{J_i\}_{i=1}^T$ be a collection 
of subsets of $\{1, \ldots, d\}$ such that $|J_i| = M$ and $J_i \cap J_{i'} = \emptyset, \, \forall i \neq i'$. Here $M$ is an integer such that $d \geq TM$. Set $I_i^M = \text{diag}(1_{J_i})$, i.e., the $(j,j)$ element of the 
diagonal matrix $I_i^M$ is 1 if $j \in J_i$ and 0 otherwise. The vector $z_i$ is defined as 
\[
    z_i \defi \frac{1}{M^{1/\dualnumber{p}}} I_i^M \xi_i,
\]
        where $(\xi_i) \in \{-1, 1\}^{d}$ is an independent Rademacher sequence. 

Using minimax duality we have 
$$
    \min_{x\in \mathcal{X}}\max_{i\in [T]} \langle z_i,x\rangle \leq \min_{\norm{x}\leq 1}\max_{\lambda\in \Delta_T} \left\langle \sum_{i\in [T]} \lambda_iz_i, x\right\rangle  = -\min_{\lambda\in 
\Delta_T}\left\| \sum_{i\in [T]} \lambda_i z_i \right\|_{\dualnumber{p}}.
$$

Let $\lambda \in \Delta_T$ be fixed. Observe that, since $z_i$'s have disjoint support 
(each $z_i$ is supported on $J_i$ such that $|J_i| = M$ and $J_i \cap J_{i'} = \emptyset$ 
for all $i \neq i'$), vector $\sum_{i \in [T]} \lambda_i z_i$ is such that its coordinates 
indexed by $j \in J_i$ ($M$ of them) are equal to $\lambda_i z_{j,i}, \, \forall i \in [T]$. 
Therefore, using the definition of $z_i$ 
\[
\left\| \sum_{i\in [T]} \lambda_i z_i \right\|^{\dualnumber{p}}_{\dualnumber{p}} =\sum_{i\in [T]} \left(M \cdot \left(\lambda_i M^{-1/\dualnumber{p}}\right)^{\dualnumber{p}}\right) = \|\lambda\|_{\dualnumber{p}}^{\dualnumber{p}}.
\]
By the relationship between $\ell_{\p}$ norms and the definition of $\lambda$, we have that $1 = \|\lambda\|_1 \leq T^{1/\p} \|\lambda\|_{\dualnumber{p}}$. Hence
$$
    \min_{\lambda\in \Delta_T}\left\|\sum_{i\in [T]}\lambda_i z_i\right\|_{\dualnumber{p}} = \norm{\lambda}_{\dualnumber{p}}\geq T^{-1/\p},
$$
and condition $(ii)$ in \cref{thm:lower-bound_random} is satisfied with $\Theta= T^{-1/\p}$ for any $\eta\geq 0$. 

On the other hand, by the definition of $z_i$'s and Hoeffding's Inequality, for all $x\in B^{\norm{\cdot}_{\p}}$, $\delta>0$
$$
    \mathbb{P}[\langle z_i, x\rangle>\delta] = \mathbb{P}[\langle z_i,x\rangle<-\delta] = \mathbb{P}\left[\sum_{j\in J_i} \xi_i[j]x_j>\delta M^{1/\dualnumber{p}}\right]\leq \exp\left(-\frac{M^{2/\dualnumber{p}}\delta^2}{2\sum_{j\in J_i}x_j^2}\right),
$$
where $\xi_i[j]$ is the $j$-th coordinate of $\xi_i$.
As $|J_i| = M$, using the relations between $\p$-norms
$$
    \norm{\{x_j\}_{j\in J_i}}_2\leq M^{1/2-1/\p}\norm{\{x_j\}_{j\in J_i}}_{\p} \leq M^{1/2-1/\p}\norm{x}_{\p} \leq  M^{1/2-1/\p}.
$$
Therefore, 
$$
    \mathbb{P}[\langle z_i, x\rangle>\delta] = \mathbb{P}[\langle z_i,x\rangle<-\delta] \leq \exp\left(-\frac{M^{2/\dualnumber{p}}\delta^2}{2M^{1-2/\p}}\right)=\exp\left(-\frac{M\delta^2}{2}\right).
$$
        Hence, condition $(iii)$ in \cref{thm:lower-bound_random} holds with $\tilde{M} \defi \lfloor d/(2T) \rfloor$. Finally, to guarantee condition $(iv)$ it is enough to take the dimension large enough, namely $d\geq \bigomegal{T^{3+2/p}\ln(TK/\eta)}$.

\item For $1\leq p < 2$, define $z_i = d^{1/\dualnumber{p}}\xi_i$ where $\xi_i\in \{-1,1\}^d$ are independent vectors with Rademacher entries. It is easy to check that $\norm{z_i}_{\dualnumber{p}} \leq 1$. Moreover, using minimax duality
$$
    \min_{x\in \mathcal{X}}\max_{i\in [T]} \langle z_i,x\rangle \leq \min_{\norm{x}\leq 1} \max_{\lambda\in \Delta_T}\langle \sum_{i\in [T]}\lambda_iz_i,x\rangle = -\min_{\lambda\in \Delta_T}\left\Vert\sum_{i\in [T]}\lambda_iz_i\right\Vert_{\dualnumber{p}}
$$

Let $\epsilon$ and $c_{\q}$ be a constant that only depends on $\q$. Using \citep[Lemma 23]{diakonikolas2020lower}, for $T\leq \min\{\frac{1}{200\epsilon^2}, \frac{c_{\q}d-\ln(1/\eta)}{\ln(3/\epsilon)}\}$ it holds
$$
    \mathbb{P}\left[\left\Vert \sum_{i\in [T]}\lambda_iz_i\right\Vert_{\dualnumber{p}} \leq 4\epsilon\right]\leq \eta.$$
Taking $\epsilon = \frac{1}{\sqrt{200 T}}$ and $d\geq \bigomegapl{\q}{T\ln(3\sqrt{200T})+\ln(1/\eta)}$ we obtain that condition $(ii)$ in \cref{thm:lower-bound_random} holds with $\Theta = \frac{\sqrt{2}}{5\sqrt{T}}$. On the other hand, by a direct application of the Hoeffding's inequality for every $x\in B^{\norm{\cdot}_p}$ 
$$
    \mathbb{P}[\langle z_i,x\rangle>\delta]=\mathbb{P}[\langle \xi_i, x\rangle>d^{1/\dualnumber{p}}\delta] \leq \exp\left(-d^{2/\dualnumber{p}}\delta^2\right).
$$
Hence, condition $(iii)$ in \cref{thm:lower-bound_random} holds with $\tilde{M} = d^{2/\dualnumber{p}}$, and condition $(iv)$ reads $d\geq \bigomegal{\left(T^{3/2}\sqrt{\ln(TK/\eta)}\right)^{\dualnumber{p}}}$.
\end{itemize}
\end{document}